\title[Coideal subalgebras of quantum $SL_2$ at roots of unity]
{Coideal subalgebras of quantum $SL_2$ \\ at roots of unity}
\author[K. Shimizu]{Kenichi Shimizu}
\address[K. Shimizu]{Department of Mathematical Sciences,
  Shibaura Institute of Technology \\
  307 Fukasaku, Minuma-ku, Saitama-shi, Saitama 337-8570, Japan.}
\email{kshimizu@shibaura-it.ac.jp}
\author[R. Sugitani]{Rei Sugitani}
\address[R. Sugitani]{Graduate School of Pure and Applied Sciences,
  University of Tsukuba,
  1-1-1 Tennodai, Tsukuba, Ibaraki 305-8571, Japan}
\email{rsugitani@math.tsukuba.ac.jp}
\keywords{Hopf algebra, coideal subalgebra}
\subjclass[2020]{16T05}
\numberwithin{equation}{section}
\newtheorem{C}{}[section]
\newtheorem{lemma}[C]{Lemma}
\newtheorem{theorem}[C]{Theorem}
\newtheorem{proposition}[C]{Proposition}
\theoremstyle{definition}
\newtheorem{definition}[C]{Definition}
\theoremstyle{remark}
\newtheorem{remark}[C]{Remark}
\newtheorem{example}[C]{Example}
\newtheorem*{claim*}{Claim}
\newcommand{\bfk}{\Bbbk}
\newcommand{\op}{\mathrm{op}}
\newcommand{\cop}{\mathrm{cop}}
\newcommand{\Span}{\mathop{\mathrm{span}}\nolimits}
\newcommand{\Hom}{\mathrm{Hom}}
\newcommand{\Rep}{\mathrm{Rep}}
\newcommand{\Mod}{\mathfrak{M}}
\newcommand{\Ker}{\mathrm{Ker}}
\newcommand{\dcsa}{\dagger}
\begin{document}

\begin{abstract}
  We classify right coideal subalgebras of the finite-dimensional quotient of the quantized enveloping algebra $U_q(\mathfrak{sl}_2)$ and that of the quantized coordinate algebra $\mathcal{O}_q(SL_2)$ at a root of unity $q$ of odd order.
  All those coideal subalgebras are described by generators and relations.
\end{abstract}

\maketitle

\tableofcontents

\section{Introduction}

Hopf algebras \cite{MR1243637} may be considered as an algebraic structure expressing symmetries in `non-commutative' geometry, and therefore one can ask what a homogeneous space over a Hopf algebra is.
After pioneering investigations of quotients of and torsors over Hopf algebras by Takeuchi \cite{MR549940,MR1271619}, Doi \cite{MR688207}, Schneider \cite{MR1098988} and Masuoka \cite{MR1140616}, it is now accepted to understand a `quantum homogeneous space' of a Hopf algebra $H$ as a coideal subalgebra of $H$ over which $H$ is faithfully flat.
M\"uller and Schneider \cite{MR1710737} have shown that many concrete examples of coideal subalgebras known at the time are indeed quantum homogeneous spaces in this sense.

Skryabin \cite{MR2286047} showed that a finite-dimensional Hopf algebra is free over every coideal subalgebra, and therefore the faithfully flatness condition is always satisfied in the finite setting.
Classifying coideal subalgebras of a given finite-dimensional Hopf algebra could be a fundamental problem such like a classification of subgroups of a given finite group. This problem has been settled in various setting: For example, coideal subalgebras of some classes of Kac algebras were classified in \cite{MR2900441,MR3270060,MR2774698}. Coideal subalgebras of the Taft algebra and the bosonization of the Fomin-Kirillov algebra were classified in \cite{MR4125589} and \cite{MR4207921}, respectively.

Coideal subalgebras of the quantized enveloping algebra $U_q(\mathfrak{g})$ of a semisimple Lie algebra $\mathfrak{g}$ have attracted attention in relation with quantum symmetric pairs and the reflection equation.
When the parameter $q$ is a root of unity, $U_q(\mathfrak{g})$ has a finite-dimensional quotient $\overline{U}_q(\mathfrak{g})$ as introduced by Lusztig \cite{Lus90}.
Although classification results are known for several classes of coideal subalgebras of $U_q(\mathfrak{g})$ or its Borel part \cite{MR2764869,MR2967250,MR3096611,MR2835327,MR2824522,MR2388323,MR2782598}, a complete classification of coideal subalgebras of $\overline{U}_q(\mathfrak{g})$ does not yet seem to be known even for $\mathfrak{g} = \mathfrak{sl}_2$.

In this paper, we develop a general method to give generators of coideal subalgebras of Hopf algebras obtained by lifting finite quantum linear spaces (Theorems~\ref{thm:generators-1} and \ref{thm:generators-2}) and, as an application, classify coideal subalgebras of the small quantum group $\overline{U}_q(\mathfrak{sl}_2)$ at a root of unity $q$ of odd order (Theorem~\ref{thm:CSA-Uq-sl-2}).
By this result and the bijection between the set of right coideal subalgebras of a finite-dimensional Hopf algebra and the set of those of the dual Hopf algebra established by Masuoka \cite{MR1157912} and Skryabin \cite{MR2286047}, we also classify right coideal subalgebras of the finite-dimensional quotient $\overline{\mathcal{O}}_q(SL_2)$ of the quantized coordinate algebra of $SL_2$ (Section~\ref{sec:CSA-Oq-SL2}).

It turns out that the list of coideal subalgebras of $\overline{U}_q(\mathfrak{sl}_2)$ at a root of unity is basically same as that of $U_q(\mathfrak{sl}_2)$ at generic $q$ given in \cite{2018arXiv180410007V}.
However, there are problems specific to the case of roots of unity.
For example, we consider the element $u = E + \alpha K F + \beta K$ of $U_q(\mathfrak{sl}_2)$ or $\overline{U}_q(\mathfrak{sl}_2)$, where $E$, $F$ and $K$ are the standard generators and $\alpha$ and $\beta$ are scalars.
The subalgebra generated by $u$ is in fact a coideal subalgebra.
The subalgebra $\langle u \rangle$ is just a polynomial algebra of one variable in $U_q(\mathfrak{sl}_2)$ at a generic parameter $q$, while $\langle u \rangle$ is finite-dimensional in $\overline{U}_q(\mathfrak{sl}_2)$ at a root of unity $q$.
In this paper, we actually give generators and determine defining relations of all coideal subalgebras of $\overline{U}_q(\mathfrak{sl}_2)$ and $\overline{\mathcal{O}}_q(SL_2)$ for a root of unity $q$ of odd order.
The minimal polynomial of $u$ (with parameters slightly changed) is given in Theorem \ref{thm:CSA-u-alpha-beta-min-pol}.

\subsection*{Organization of this paper}
This paper is organized as follows:
For a finite-dimensional Hopf algebra $H$, we denote by $\mathcal{C}(H)$ the set of right coideal subalgebras of $H$.
In Section \ref{sec:preliminaries}, after introducing basic notation, we recall Skryabin's freeness theorem for coideal subalgebras \cite{MR2286047} and the bijection between $\mathcal{C}(H)$ and $\mathcal{C}(H^*)$ due to Masuoka \cite{MR1157912} and Skryabin \cite{MR2286047}.
In this paper, we consider the setting where we are given two finite-dimensional Hopf algebras $H$ and $U$ and an isomorphism $\phi : H \to U^*$ of Hopf algebras. By composing the bijection between $\mathcal{C}(H)$ and $\mathcal{C}(H^*)$ and the map induced by $\phi$, we obtain a bijection
\begin{equation}
  \label{eq:intro-Masuoka-Skryabin-correspondence}
  \mathcal{C}(U) \to \mathcal{C}(H),
  \quad A \mapsto A^{\dcsa} := \phi^{-1}((U/\Ker(\varepsilon_A)U)^*),
\end{equation}
where $\varepsilon_A$ is the restriction of the counit of $U$ to $A$.
We give formulas and some properties of $A^{\dcsa}$ for the use of later sections.

In Section~\ref{sec:lifting}, we analyze generators of right coideal subalgebras of Hopf algebras $U(\mathcal{D})$ obtained by lifting finite quantum linear spaces.
The base field, say $\bfk$, is assumed to be algebraically closed and of characteristic zero from this section.
The Hopf algebra $U(\mathcal{D})$ is generated by a finite abelian group $\Gamma$ and skew-primitive elements $x_1, \cdots, x_{\theta}$, and has monomials of $x_1, \cdots, x_{\theta}$ as a basis over $\bfk \Gamma$.
As a natural extension of skew-derivations appearing in the study of Nichols algebras \cite{MR1913436,MR506406}, we construct linear operators $\partial_i$ ($i = 1, 2, \cdots, \theta$) on $U(\mathcal{D})$ behaving like a partial derivation with respect to $x_i$.
Every right coideal of $U(\mathcal{D})$ is stable under $\partial_i$ (Lemma~\ref{lem:partial-der-coideal}). This is a key observation to prove Theorem~\ref{thm:generators-1}, which states that a right coideal subalgebra $A$ of $U(\mathcal{D})$ is generated by the subspace
\begin{equation*}
  A \cap \Span \{ g, g x_i \mid g \in \Gamma, i = 1, \cdots, \theta \}
\end{equation*}
of $A$. With an eye toward practical applications, we also provide a theorem giving generators more explicitly (Theorem~\ref{thm:generators-2}). As a demonstration of this theorem, we classify right coideal subalgebras of Hopf algebras including the Taft algebra and $\mathrm{gr}\,\overline{U}_q(\mathfrak{sl}_2)$ (Examples \ref{ex:pointed-rank-1} and~\ref{ex:CSA-gr-Uq-sl2}).

In Section~\ref{sec:CSA-Uq-sl2}, we classify right coideal subalgebras of the Hopf algebra $\overline{U}_q(\mathfrak{sl}_2)$, where $q$ is a root of unity of odd order $N > 1$. Since $\overline{U}_q(\mathfrak{sl}_2)$ is an instance of $U(\mathcal{D})$, the classification is done by applying Theorem \ref{thm:generators-2}.
Consequently, the right coideal subalgebras of $\overline{U}_q(\mathfrak{sl}_2)$ are
\begin{gather*}
  \overline{U}_q(\mathfrak{sl}_2),
  \quad \langle K^r \rangle,
  \quad \langle K^r, E \rangle,
  \quad \langle K^r, \tilde{F} \rangle
  \quad (\text{$r$ is a positive divisor of $N$}), \\
  \langle E + \alpha \tilde{F} + \beta K \rangle
  \quad (\alpha, \beta \in \bfk, (\alpha, \beta) \ne (0,0)),
  \quad \langle \tilde{F} + \beta K \rangle \quad (\beta \in \bfk, \beta \ne 0), \\
  \quad \langle E + \lambda K, \tilde{F} + \mu K \rangle
  \quad (\lambda, \mu \in \bfk, (1-q^2) \lambda \mu = 1),
\end{gather*}
where $\tilde{F} = (q-q^{-1})K F$.
For each right coideal subalgebra of $\overline{U}_q(\mathfrak{sl}_2)$, we describe its algebra structure.
It turns out that the subalgebra $\langle E + \lambda K, \tilde{F} + \mu K \rangle$ with $\lambda \mu (1-q^2) = 1$ is isomorphic to the Taft algebra (Theorem~\ref{thm:CSA-v-lambda-w-mu-structure}).
The most difficult case is the subalgebra $\langle u_{\alpha,\beta} \rangle$ generated by $u_{\alpha,\beta} = E + \alpha \tilde{F} + \beta K$.
We give the minimal polynomial of $u_{\alpha,\beta}$ explicitly and determine when $\langle u_{\alpha,\beta} \rangle$ is semisimple as an algebra (Theorem~\ref{thm:CSA-u-alpha-beta-min-pol}).

In Section~\ref{sec:CSA-Oq-SL2}, we classify right coideal subalgebras of the quotient $\overline{\mathcal{O}}_q(SL_2)$ of the quantized coordinate algebra.
Since $H = \overline{\mathcal{O}}_q(SL_2)$ is dual to $U = \overline{U}_q(\mathfrak{sl}_2)$, a complete list of the right coideal subalgebras of $\overline{\mathcal{O}}_q(SL_2)$ is obtained from the list of those of $\overline{U}_q(\mathfrak{sl}_2)$ via the bijection \eqref{eq:intro-Masuoka-Skryabin-correspondence}.
We also determine generators and defining relations of right coideal subalgebras of $\overline{\mathcal{O}}_q(SL_2)$. For example, we have
\begin{gather*}
  \langle K^r \rangle^{\dcsa} = \langle a^{N/r}, a^{-1} b, a c \rangle
  = \langle d^{N/r}, b d, c d^{-1} \rangle,
  \quad \langle K^r, E \rangle^{\dcsa} = \langle d^{N/r}, c d^{-1} \rangle, \\
  \langle E + \alpha K \rangle^{\dcsa} = \langle c d^{-1}, d^2 + (q-q^{-1}) \alpha b d \rangle \quad (\alpha \in \bfk),
\end{gather*}
etc., where $a$, $b$, $c$ and $d$ are the generators of $\overline{\mathcal{O}}_q(SL_2)$ (Theorem~\ref{thm:CSA-Oq-SL2-generators}).
Although we do not have handy formulas of generators of $\langle E + \alpha \tilde{F} + \beta K \rangle^{\dcsa}$ and $\langle E + \lambda K, \tilde{F} + \mu K \rangle^{\dcsa}$ in the general case, we express their generators in terms of integrals in coideal subalgebras (Theorems~\ref{thm:CSA-Oq-SL2-generators-2} and ~\ref{thm:CSA-Oq-SL2-generators-3}).

In Section~\ref{sec:remarks}, we add some supplementary remarks.
First, we give lists of normal coideal subalgebras of $\overline{U}_q(\mathfrak{sl}_2)$ and $\overline{\mathcal{O}}_q(SL_2)$.
Second, we give a negative answer to a question raised in \cite{MR4125589} concerning the number of orbits of coideal subalgebras by Hopf automorphisms.
Third, we point out that the condition `$S^2(A) \subset A$' cannot be dropped from the following Maschke type theorem due to Koppinen: If a coideal subalgebra $A$ of a finite-dimensional Hopf algebra possesses a total integral and $S^2(A) \subset A$, then $A$ is semisimple \cite{MR1199682}.
Finally, we give examples of simple algebras in $\Rep(\overline{U}_q(\mathfrak{sl}_2))$.

\section{Preliminaries}
\label{sec:preliminaries}

\subsection{Convention}
\label{subsec:convention}

The standard notation for intervals in analysis will be diverted to mean intervals of integers. For example, given two integers $a$ and $b$, we have
\begin{equation*}
  [a, b] = \{ x \in \mathbb{Z} \mid a \le x \le b \} \quad \text{and} \quad
  [a, b) = \{ x \in \mathbb{Z} \mid a \le x < b \}.
\end{equation*}

Throughout this paper, we work over a field $\bfk$.
No additional assumptions are imposed on $\bfk$ in this section, however, $\bfk$ will be an algebraically closed field of characteristic zero after Section \ref{sec:lifting}.
Given a vector space $X$ (over $\bfk$), we denote by $X^*$ the dual space of $X$.
For elements $f \in X^*$ and $x \in X$, we often write $f(x)$ as $\langle f, x \rangle$.
Unless otherwise noted, the symbol $\otimes$ means the tensor product over $\bfk$.
By a (co)algebra, we always mean a (co)associative (co)unital (co)algebra over $\bfk$.
The comultiplication and the counit of a coalgebra are denoted by $\Delta$ and $\varepsilon$, respectively.
To express the comultiplication, we adopt the Sweedler notation, such as
\begin{equation*}
  \Delta(c) = c_{(1)} \otimes c_{(2)}, \quad
  \Delta(c_{(1)}) \otimes c_{(2)}
  = c_{(1)} \otimes c_{(2)} \otimes c_{(3)}
  = c_{(1)} \otimes \Delta(c_{(2)}).
\end{equation*}

Let $C$ be a coalgebra. Given a right $C$-comodule $M$, we denote the right coaction of $C$ on $M$ by $\delta_M : M \to M \otimes C$ and express it as $\delta_M(m) = m_{(0)} \otimes m_{(1)}$ like Sweedler's notation. We note that $C^*$ has a natural structure of an algebra and acts on a right $C$-comodule $M$ from the left by the action defined by $f \rightharpoonup m = m_{(0)} \langle f, m_{(1)} \rangle$ for $m \in M$ and $f \in C^*$. The algebra $C^*$ acts on a left $C$-comodule from the right in an analogous way.

\newcommand{\Grp}{\mathrm{G}}
\newcommand{\Prim}{\mathrm{P}}

We refer the reader to Montgomery's book \cite{MR1243637} for basics on Hopf algebras.
The antipode of a Hopf algebra $H$ is denoted by $S$.
We define and denote the group of {\em grouplike elements} of $H$ by
\begin{equation*}
  \Grp(H) = \{ g \in H \mid \text{$\Delta(g) = g \otimes g$ and $\varepsilon(g) = 1$} \}.
\end{equation*}
According to \cite{MR1243637}, we also define
\begin{equation*}
  \Prim_{g, h}(H) = \{ p \in H \mid \Delta(p) = p \otimes g + h \otimes p \}
  \quad (g, h \in \Grp(H))
\end{equation*}
and call an element of this space a $(g, h)$-{\em skew primitive element}\footnote{The space of $(g, h)$-skew ptimieive elements of $H$ is denoted by $P_{h,g}(H)$ in \cite{MR1659895}.}.

\subsection{Freeness over coideal subalgebras}

Let $H$ be a Hopf algebra. We recall that a {\em right coideal} of $H$ is a subspace $V \subset H$ such that $\Delta(V) \subset V \otimes H$.
A {\em right coideal subalgebra} of $H$ is a subalgebra of $H$ that is simultaneously a right coideal of $H$.
A {\em left coideal subalgebra} is defined in an analogous way.
From now on, unless otherwise noted, a coideal subalgebra means a right coideal subalgebra.

Given a coideal subalgebra $A$ of $H$, a {\em right $(H, A)$-Hopf module} \cite{MR549940} is a right $A$-module $M$ endowed with a structure of a right $H$-comodule such that the coaction $M \to M \otimes H$ is right $A$-linear. We denote the category of $(H, A)$-Hopf modules by $\Mod^H_A$.
Provided that $H$ is finite-dimensional, Skryabin \cite{MR2286047} showed that every object of $\Mod^H_A$ is free as a right $A$-module. We refer this result {\em Skryabin's freeness theorem}.

\subsection{Coideal subalgebras of the dual Hopf algebra}

Given a Hopf algebra $H$, we denote by $\mathcal{C}(H)$ the set of coideal subalgebras of $H$. We note that $\mathcal{C}(H)$ is in fact a lattice with respect to the inclusion order. The meet $A \wedge B$ for $A, B \in \mathcal{C}(H)$ is just the intersection of $A$ and $B$, and the join $A \vee B$ is the subalgebra generated by $A$ and $B$.

We assume that $H$ is finite-dimensional.
As an application of his freeness theorem, Skryabin \cite{MR2286047} showed that every coideal subalgebra of $H$ is a Frobenius algebra.
By combining this result with Masuoka's result \cite[Proposition 2.10]{MR1157912}, a bijection between $\mathcal{C}(H)$ and $\mathcal{C}(H^*)$ is established as follows:
Given $A \in \mathcal{C}(H)$, we set $A^{+} = A \cap \Ker(\varepsilon)$.
Then $H/A^{+}H$ is a right $H$-module coalgebra as a quotient of $H$.
We note that the dual space $M^*$ of a right $H$-module $M$ is a left $H$-module by the action given by $(h \rightharpoonup f)(m) = f(m h)$ for $f \in H^*$, $h \in H$ and $m \in M$.
The left $H$-module $(H/A^{+}H)^*$, regarded as an $H$-submodule of $H^*$, is a right coideal subalgebra of $H^*$. The bijection is given by
\begin{equation}
  \label{eq:Masuoka-Skryabin-correspondence}
  \mathcal{C}(H) \to \mathcal{C}(H^*),
  \quad A \mapsto (H/A^{+}H)^*.
\end{equation}

Let $A$ be a coideal subalgebra of $H$. Then $H$ is a right comodule over the quotient coalgebra $D := H/A^{+}H$. Skryabin \cite[Theorem 6.1]{MR2286047} showed that there is an isomorphism $H \cong A \otimes D$ of left $A$-modules as well as an isomorphism of right $D$-comodules. In particular, we have the following relation:

\begin{lemma}
  \label{lem:dual-coideal-sub-1}
  For $A \in \mathcal{C}(H)$, we have
  \begin{equation*}
    \dim(A) \cdot \dim(H/A^{+}H) = \dim(H).
  \end{equation*}
\end{lemma}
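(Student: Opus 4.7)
The plan is essentially to read off the dimension identity from the decomposition $H \cong A \otimes D$, where $D = H/A^{+}H$, that Skryabin established and that has just been recalled in the paragraph preceding the lemma. Since this isomorphism holds in particular at the level of $\bfk$-vector spaces (being an isomorphism of left $A$-modules), and since dimension is multiplicative under the tensor product, it immediately yields
\[
\dim(H) \;=\; \dim(A \otimes D) \;=\; \dim(A) \cdot \dim(H/A^{+}H),
\]
which is the claim. So the proof amounts to quoting the Skryabin decomposition and counting dimensions.

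There is no substantive obstacle here; the lemma is a dimension-count consequence of a deeper structural theorem that has already been invoked. One might consider an alternative route that uses only Skryabin's freeness theorem and avoids the sharper $A \otimes D$ decomposition: view $H$ as an object of $\Mod^H_A$ via right multiplication by $A$ and the coproduct as the $H$-coaction, conclude that $H$ is free as a right $A$-module of some rank $n$ (so $\dim H = n \dim A$), and then identify $H/A^{+}H$ as a $\bfk$-space of dimension $n$. However, matching the right $A$-action appearing in the freeness theorem with the left ideal $A^{+}H$ appearing in the statement requires essentially the same bookkeeping that goes into producing the isomorphism $H \cong A \otimes D$ in the first place, so nothing is gained. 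I will therefore simply cite the Skryabin decomposition and take dimensions of both sides.
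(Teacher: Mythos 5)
Your proof is correct and matches the paper exactly: the lemma is stated as an immediate consequence of Skryabin's isomorphism $H \cong A \otimes D$ recalled in the preceding paragraph, and taking dimensions is all that is needed. No gap, and your aside about the alternative freeness-only route is a fair assessment of why it buys nothing here.
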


Let $U$ and $H$ be finite-dimensional Hopf algebras.
Suppose that we are given an isomorphism $\phi : H \to U^*$ of Hopf algebras.
By composing the bijection~\eqref{eq:Masuoka-Skryabin-correspondence} for $U$ and the bijection $\mathcal{C}(U^*) \to \mathcal{C}(H)$ induced by $\phi$, we have a bijection
\begin{equation}
  \label{eq:Masuoka-Skryabin-correspondence-2}
  \mathcal{C}(U) \to \mathcal{C}(H),
  \quad A \mapsto A^{\dcsa} := \phi^{-1}((U/A^{+}U)^*).
\end{equation}

We note that the bijection \eqref{eq:Masuoka-Skryabin-correspondence-2} reverses the order. Thus it is in fact an isomorphism $\mathcal{C}(U) \cong \mathcal{C}(H)^{\op}$ of partially ordered sets. Hence, in particular, we have
\begin{equation}
  \label{eq:Masuoka-Skryabin-correspondence-3}
  (A \cap B)^{\dcsa} = A^{\dcsa} \vee B^{\dcsa}, \quad
  (A \vee B)^{\dcsa} = A^{\dcsa} \cap B^{\dcsa}
  \quad (A, B \in \mathcal{C}(U)).
\end{equation}

By Lemma \ref{lem:dual-coideal-sub-1} and the definition of $A^{\dcsa}$, we have
\begin{equation}
  \label{eq:dual-coideal-sub-dim}
  \dim(A^{\dcsa}) \cdot \dim(A) = \dim(H).
\end{equation}

There is a right action of $U$ on $H$ defined and denoted by
\begin{equation*}
  h \leftharpoonup u = (h_{(1)}, u) h_{(2)}
  \quad (u \in U, h \in h),
\end{equation*}
where $(h, u) = \langle \phi(h), u \rangle$ is the Hopf pairing induced by $\phi$.
With this notation, the coideal subalgebra $A^{\dcsa}$ is expressed as follows:

\begin{lemma}
  \label{lem:dual-coideal-sub-2}
  For $A \in \mathcal{C}(U)$, we have
  \begin{equation}
    \label{eq:A-dual-formula}
    A^{\dcsa} = \{ h \in H \mid
    \text{$h \leftharpoonup a = \varepsilon(a) h$ for all $a \in A$} \}.
  \end{equation}
\end{lemma}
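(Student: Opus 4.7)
The plan is to unwind the definition of $A^{\dcsa}$ and translate the annihilation condition through the Hopf pairing into the stated invariance property, using that $\phi$ is a Hopf algebra isomorphism and that coideal subalgebras are assumed unital.

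First I would observe that, by construction, $h \in A^{\dcsa}$ if and only if $\phi(h)$, viewed as an element of $U^*$, vanishes on the subspace $A^{+}U$, i.e.
\begin{equation*}
  \langle \phi(h), a u \rangle = 0 \quad \text{for all } a \in A^{+}, \ u \in U.
\end{equation*}
Since $\phi$ is a coalgebra map, $\Delta(\phi(h)) = \phi(h_{(1)}) \otimes \phi(h_{(2)})$, so the left-hand side equals $(h_{(1)}, a)\,(h_{(2)}, u)$. By the definition of the right action $\leftharpoonup$, this is exactly $(h \leftharpoonup a, u)$. Because $\phi$ is an isomorphism, the pairing $(\,\cdot\,,\,\cdot\,) : H \otimes U \to \bfk$ is non-degenerate, and hence the vanishing of $(h \leftharpoonup a, u)$ for all $u \in U$ is equivalent to $h \leftharpoonup a = 0$. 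Therefore
\begin{equation*}
  A^{\dcsa} = \{ h \in H \mid h \leftharpoonup a = 0 \text{ for all } a \in A^{+} \}.
\end{equation*}

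Next I would pass from $A^{+}$ to $A$. Since $A$ is unital, any $a \in A$ decomposes as $a = (a - \varepsilon(a) 1) + \varepsilon(a) 1$ with $a - \varepsilon(a)1 \in A^{+}$. A direct computation gives $h \leftharpoonup 1 = (h_{(1)}, 1) h_{(2)} = \varepsilon(h_{(1)}) h_{(2)} = h$, using that $\phi(h)$ is counital in the sense $\langle \phi(h), 1 \rangle = \varepsilon(h)$. Hence $h \leftharpoonup a = 0$ for all $a \in A^{+}$ if and only if $h \leftharpoonup a = \varepsilon(a) h$ for all $a \in A$, which is \eqref{eq:A-dual-formula}.

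There is no real obstacle here: the argument is a formal manipulation of the pairing. The only thing to be careful about is the coalgebra-map property of $\phi$ (to move $\Delta$ across) and the unitality of $A$ (to reduce the $A^{+}$-invariance to the $\varepsilon$-twisted invariance on all of $A$).
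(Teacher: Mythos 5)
Your proof is correct and follows essentially the same route as the paper's: both rely on the Hopf pairing identity $(h \leftharpoonup a, u) = (h, au)$, the non-degeneracy of the pairing, and the decomposition $a = (a - \varepsilon(a)1) + \varepsilon(a)1$ to pass between $A^{+}$ and $A$. The only cosmetic difference is that you organize the argument as a single chain of equivalences, whereas the paper verifies the two inclusions separately.
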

\begin{proof}
  Since $(-,-)$ is a Hopf pairing, we have
  \begin{equation}
    \label{eq:dual-coideal-sub-2-proof-1}
    (h \leftharpoonup u, w)
    = (h_{(1)}, u) (h_{(2)}, w)
    = (h, u w)
    \quad (h \in H, u, w \in U).
  \end{equation}
  Let $f \in A^{\dcsa}$ and $a \in A$.
  Then, by the definition of $A^{\dcsa}$, we have $\phi(f)(x) = 0$ for all elements $x \in A^{+}L$.
  Since $a^{+} := a - \varepsilon(a)1$ belongs to $A^{+}$, we have
  \begin{equation*}
    (f \leftharpoonup a, u)
    - (\varepsilon(a) f, u)
    = (f \leftharpoonup a^{+}, u)
    \mathop{=}^{\eqref{eq:dual-coideal-sub-2-proof-1}} (f, a^{+} u) = 0.
  \end{equation*}
  By the non-degeneracy of $(-, -)$, we have $f \leftharpoonup a = \varepsilon(a) f$. Thus we have proved the `$\subset$' part of \eqref{eq:A-dual-formula}.
  To prove the converse inclusion, we let $f \in H$ be an element such that $h \leftharpoonup a = \varepsilon(a) f$ for all $a \in A$.
  For $a \in A^{+}$ and $u \in U$, we have
  \begin{equation*}
    \langle \phi(f), a u \rangle = (f, a u)
    \mathop{=}^{\eqref{eq:dual-coideal-sub-2-proof-1}}
    (f \leftharpoonup a, u)
    = (\varepsilon(a) f, u) = 0
  \end{equation*}
  since $\varepsilon(a) = 0$. Thus $f \in \phi^{-1}((U/A^{+}U)^*)$, as desired. The proof is done.
\end{proof}

\subsection{Integrals of coideal subalgebras}

Let $H$ be a finite-dimensional Hopf algebra, and let $A$ be a coideal subalgebra of $H$. A {\em right integral} of $A$ is an element $\Lambda \in A$ such that the equation $\Lambda a = \varepsilon(a) \Lambda$ holds for all $a \in A$.
Since $A$ is a Frobenius algebra \cite{MR2286047}, a non-zero right integral of $A$ always exists and is unique up to scalar multiple.

\begin{lemma}
  \label{lem:dual-coideal-sub-3}
  Retain the notation from Lemma~\ref{lem:dual-coideal-sub-2}.
  For $A \in \mathcal{C}(U)$, the corresponding coideal subalgebra $A^{\dcsa}$ of $H$ is given by
  \begin{equation}
    \label{eq:A-dual-formula-2}
    A^{\dcsa} = H \leftharpoonup \Lambda,
  \end{equation}
  where $\Lambda$ is a non-zero right integral of $A$.
\end{lemma}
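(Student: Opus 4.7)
The strategy is to prove $H \leftharpoonup \Lambda \subset A^{\dcsa}$ directly, and then to establish the reverse inclusion by translating both sides into subspaces of $U^*$ via $\phi$, reducing the problem to an identity about the kernel of left multiplication by $\Lambda$ in $U$.

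First I would handle the easy inclusion. The Hopf pairing identity $(h \leftharpoonup u, w) = (h, uw)$ together with the non-degeneracy of the pairing shows that $\leftharpoonup$ is a right action of $U$ on $H$. Since $\Lambda$ is a right integral of $A$, we have $\Lambda a = \varepsilon(a) \Lambda$ for all $a \in A$, and therefore
\begin{equation*}
  (h \leftharpoonup \Lambda) \leftharpoonup a
  = h \leftharpoonup (\Lambda a)
  = \varepsilon(a)(h \leftharpoonup \Lambda).
\end{equation*}
Lemma~\ref{lem:dual-coideal-sub-2} then places $h \leftharpoonup \Lambda$ in $A^{\dcsa}$.

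Next I would translate the reverse inclusion. By definition, $\phi(A^{\dcsa}) = (U/A^{+}U)^*$, which is canonically identified with the subspace $\{f \in U^* \mid f(A^{+}U) = 0\}$. A short computation using the Hopf pairing gives $\phi(h \leftharpoonup \Lambda)(u) = \phi(h)(\Lambda u)$ for $u \in U$, so $\phi(H \leftharpoonup \Lambda)$ is exactly the set of functionals of the form $u \mapsto f(\Lambda u)$ with $f \in U^*$. Such functionals are precisely those $g \in U^*$ satisfying $g(K) = 0$, where $K := \{u \in U \mid \Lambda u = 0\}$. Hence the problem reduces to showing $K = A^{+}U$.

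The inclusion $A^{+}U \subset K$ is immediate: for $a \in A^{+}$ and $u \in U$, we have $\Lambda(au) = \varepsilon(a)\Lambda u = 0$. The main obstacle is the reverse inclusion, which I expect to settle using Skryabin's freeness theorem. The isomorphism $U \cong A \otimes D$ of left $A$-modules (with $D = U/A^{+}U$) provides a basis $\{e_j\}$ of $U$ as a left $A$-module, lifted from a basis of $D$. Writing $u \in U$ uniquely as $u = \sum_j a_j e_j$ with $a_j \in A$, we obtain $\Lambda u = \sum_j \varepsilon(a_j)\,\Lambda e_j$. The elements $\Lambda e_j$ are linearly independent, since they are the images under $A \otimes D \to U$ of the linearly independent elements $\Lambda \otimes e_j$ (using $\Lambda \ne 0$). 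Therefore $\Lambda u = 0$ forces $\varepsilon(a_j) = 0$ for every $j$, so each $a_j$ lies in $A^{+}$ and $u \in A^{+}U$, completing the proof.
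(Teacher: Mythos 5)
Your proof is correct, but it takes a genuinely different route from the paper's. The paper stays on the $H$ side: it notes that $\phi$ makes $H \cong U^*$ as right $U$-modules and that $U^* \cong U$ because $U$ is a Frobenius algebra, so Skryabin's freeness theorem makes $H$ a free right $A$-module; then, writing $I(M)$ for the subspace of elements $m$ with $m a = \varepsilon(a) m$, it observes $I(A) = A\Lambda$ and hence $I(F) = F\Lambda$ for any free right $A$-module $F$, concluding via Lemma~\ref{lem:dual-coideal-sub-2}, which identifies $A^{\dcsa}$ with $I(H)$. You instead dualize to $U$ and reduce the lemma to the identity $\{u \in U \mid \Lambda u = 0\} = A^{+}U$, which you prove using the freeness of $U$ as a left $A$-module. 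Both arguments rest on Skryabin's theorem, but on different faces of it (right-module freeness of $H$ versus left-module freeness of $U$); your version trades the Frobenius-algebra step $U^* \cong U$ for the elementary finite-dimensional duality $\mathrm{Im}(T^*) = (\Ker T)^{\perp}$, and it isolates the clean intermediate fact that the left annihilator of $\Lambda$ in $U$ is exactly $A^{+}U$, which the paper's proof never makes explicit. All the steps check out; in particular the linear independence of the elements $\Lambda e_j$ does follow, as you say, from the isomorphism $U \cong A \otimes (U/A^{+}U)$ of left $A$-modules recalled before Lemma~\ref{lem:dual-coideal-sub-1}, together with $\Lambda \ne 0$.
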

\begin{proof}
  The action of $U$ on $H$ is defined so that $\phi : H \to U^*$ is an isomorphism of right $U$-modules.
  Since $U$ is a Frobenius algebra, $U^*$ is isomorphic to $U$ as a right $U$-module.
  By this observation and Skryabin's freeness theorem, we see that $H$ is free as a right $A$-module.
  Now, for a right $A$-module $M$, we set
  \begin{equation*}
    I(M) = \{ m \in M \mid \text{$m a = \varepsilon(a) m$ for all $a \in A$} \}.
  \end{equation*}
  Then we have $I(A) = \bfk \Lambda = A \Lambda$ by the fact that a non-zero right integral of $A$ is unique up to scalar multiple. Hence, for a free right $A$-module $F$, we have $I(F) = F \Lambda$. The proof is done by letting $F = H$.
\end{proof}

\section{Lifting of finite quantum linear spaces}
\label{sec:lifting}

\subsection{Lifting of finite quantum linear spaces}

In this section, $\bfk$ is assumed to be an algebraically closed field of characteristic zero.
A quantum linear space is a Nichols algebra of diagonal type with Dynkin diagram $A_1 \times \cdots \times A_1$.
In an earlier stage of the investigation of pointed Hopf algebras and Nichols algebras, Andruskiewitsch and Schneider \cite{MR1659895} classified all pointed Hopf algebras obtained by lifting of finite quantum linear spaces.
The aim of this section is to give a convenient set of generators of coideal subalgebras of these Hopf algebras.

We first introduce the finite-dimensional pointed Hopf algebra $U(\mathcal{D})$ with reference to Andruskiewitsch and Schneider \cite{MR1659895}.
Let $\theta$ be a positive integer, which we call the {\em rank}.
The Hopf algebra $U(\mathcal{D})$ is constructed from a datum
\begin{equation}
  \label{eq:datum}
  \mathcal{D} = (\Gamma, (g_i)_{1 \le i \le \theta},
  (\chi_i)_{1 \le i \le \theta},
  (\lambda_{i j})_{1 \le i < j \le \theta},
  (\mu_i)_{1 \le i \le \theta})
\end{equation}
described as follows: $\Gamma$ is a finite abelian group with identity element $e$, $g_i$ is an element of $\Gamma$, and $\chi_i$ is an element of the group $\hat{\Gamma} := \Hom(\Gamma, \bfk^{\times})$ such that
\begin{equation}
  \label{eq:datum-disconnectedness}
  \chi_i(g_i) \ne 1, \quad \chi_i(g_j) \chi_j(g_i) = 1
  \quad (i, j \in [1, \theta], i \ne j).
\end{equation}
$\lambda_{i j}$ is an element of $\bfk$ such that
\begin{equation}
  \label{eq:datum-linking}
  \text{$\lambda_{i j} = 0$
    if ($g_i g_j = e$ or $\chi_i \chi_j \ne \varepsilon$)},
\end{equation}
where $\varepsilon$ is the identity element of $\hat{\Gamma}$.
$\mu_i$ is an element of $\{ 0, 1 \}$ such that
\begin{equation}
  \label{eq:datum-root-vec}
  \text{$\mu_i = 0$
    if ($g_i^{N_i} = 1$ or $\chi_i^{N_i} \ne \varepsilon$)},
\end{equation}
where $N_i$ is the order of $\chi_i(g_i)$.

\begin{definition}
  Given a datum $\mathcal{D}$ as in \eqref{eq:datum}, the Hopf algebra $U(\mathcal{D})$ is defined as follows: As an algebra, it is generated by the symbols $u_g$ ($g \in \Gamma$) and $x_i$ ($i = 1, \cdots, \theta$) subject to the defining relations
  \begin{gather}
    \label{eq:uD-defining-relations-1}
    u_e = 1, \quad u_g u_h = u_{g h}, \\
    \label{eq:uD-defining-relations-2}
    u_g x_i = \chi_i(g) x_i u_g, \\
    \label{eq:uD-defining-relations-3}
    x_j x_i = \chi_i(g_j) x_i x_j + \lambda_{i j}(1 - u_{g_i} u_{g_j}), \\
    \label{eq:uD-defining-relations-4}
    x_i^{N_i} = \mu_i(1 - u_{g_i}^{N_i})
  \end{gather}
  for $g, h \in \Gamma$ and $i, j \in [1, \theta]$ with $i < j$.
  The group algebra $\bfk \Gamma$ is a subalgebra of $U(\mathcal{D})$ by identifying $g \in \Gamma$ with $u_g \in U(\mathcal{D})$.
  Thus, in what follows, we write $u_g$ ($g \in \Gamma$) simply as $g$.
  The Hopf algebra structure of $U(\mathcal{D})$ is determined by
  \begin{equation}
    \label{eq:uD-comultiplication-1}
    \Delta(g) = g \otimes g \quad (g \in \Gamma),
    \quad \Delta(x_i) = x_i \otimes g_i + 1 \otimes x_i
    \quad (i = 1, \cdots, \theta).
  \end{equation}
\end{definition}

The Hopf algebra $\mathcal{A}(\Gamma, \mathcal{R}, \mathcal{D})$ of \cite{MR1659895} is identical to $U(\mathcal{D})^{\cop}$, where $C^{\cop}$ for a coalgebra $C$ means the opposite coalgebra of $C$. The comultiplication opposite to \cite{MR1659895} is convenient for us since, for example, the subalgebra generated by $x_i$ is a right coideal subalgebra of $U(\mathcal{D})$.
According to \cite{MR1659895}, the set
\begin{equation}
  \label{eq:uD-basis}
  \{ g x_1^{i_1} x_2^{i_2} \cdots x_{\theta}^{i_{\theta}} \mid \text{$g \in \Gamma$ and $i_k \in [0, N_k)$ for all $k \in [1, \theta]$} \}
\end{equation}
is a basis of $U(\mathcal{D})$.
Thus $\bfk \Gamma$ is a Hopf subalgebra of $U(\mathcal{D})$ and the monomials of $x_i$'s are a basis of $U(\mathcal{D})$ over $\bfk \Gamma$.

\subsection{Generating set of a coideal subalgebra}
We fix a datum $\mathcal{D}$ as in \eqref{eq:datum} and use the same notation as above.
One of main results of this section is:

\begin{theorem}
  \label{thm:generators-1}
  A coideal subalgebra $A \subset U(\mathcal{D})$ is generated by
  \begin{equation}
    \label{eq:coideal-sub-generators-1}
    A \cap \Span\{ g, g x_i \mid g \in \Gamma, i \in [1, \theta] \}.
  \end{equation}
\end{theorem}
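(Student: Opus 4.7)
The plan is to induct on a natural filtration degree, using the partial derivations $\partial_i$ from Section~\ref{sec:lifting} to descend in degree and a quantum Euler-type identity to reconstruct at the top of the filtration. Let $B\subseteq U(\mathcal{D})$ denote the subalgebra generated by the subspace~\eqref{eq:coideal-sub-generators-1}. Since $B\subseteq A$ is immediate, only the reverse inclusion requires argument.

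First I would equip $U(\mathcal{D})$ with the increasing algebra filtration $F_0\subseteq F_1\subseteq\cdots$ in which $F_n$ is spanned by those PBW basis elements of~\eqref{eq:uD-basis} with $i_1+\cdots+i_\theta\le n$. The defining relations~\eqref{eq:uD-defining-relations-3} and~\eqref{eq:uD-defining-relations-4} show that this is multiplicative, with error terms of strictly lower degree, and one verifies $F_1=\Span\{g,gx_i\mid g\in\Gamma,\,i\in[1,\theta]\}$. For nonzero $a\in A$, let $|a|$ be the smallest $n$ with $a\in F_n$. The base case $|a|\le 1$ is then immediate: $a\in A\cap F_1$ is a member of the generating subspace of $B$.

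For the inductive step, suppose $|a|=d\ge 2$ and the conclusion holds for elements of smaller filtration degree. Lemma~\ref{lem:partial-der-coideal} gives $\partial_i(a)\in A$ for each $i$, and since $\partial_i$ is constructed to strictly lower the filtration degree, $\partial_i(a)\in A\cap F_{d-1}$ lies in $B$ by the induction hypothesis. The essential remaining step is to recover $a$ from these derivatives modulo $F_{d-1}$. Passing to the associated graded algebra $\mathrm{gr}\,U(\mathcal{D})\cong\mathfrak{B}(V)\#\bfk\Gamma$, where $\mathfrak{B}(V)$ is the quantum linear space generated by the symbols of the $x_i$ with $x_jx_i=\chi_i(g_j)x_ix_j$ and $x_i^{N_i}=0$, the symbols of the $\partial_i$ become honest $q$-skew derivations; a quantum Euler-type identity on this graded algebra expresses every homogeneous element of positive $x$-degree as a $\bfk$-linear combination, with nonzero $q$-integer coefficients, of terms of the form $x_i\cdot\partial_i(-)$. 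Lifting this identity back to $U(\mathcal{D})$ yields an element $b\in B$ with $a-b\in A\cap F_{d-1}$, and the induction hypothesis supplies $a-b\in B$, whence $a\in B$.

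The main obstacle is the quantum Euler step and its lift. One must verify that the $q$-integer scalars $[m]_{\chi_k(g_k)}$ with $0<m<N_k$ arising as coefficients are all invertible, which relies precisely on the definition of $N_k$ as the order of $\chi_k(g_k)$ together with $\mathrm{char}(\bfk)=0$. One must also keep track of the group-algebra twists carefully: since $gx_i=\chi_i(g)x_ig$ and the $\chi_i$-twists are already built into the skew-derivation property of $\partial_i$, each term $x_i\partial_i(a)$ with $\partial_i(a)\in B$ indeed belongs to $B$, so the reconstruction stays inside the generating subspace~\eqref{eq:coideal-sub-generators-1} as required.
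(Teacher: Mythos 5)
Your overall architecture --- induct on total $x$-degree, use Lemma~\ref{lem:partial-der-coideal} to push elements of $A$ down the filtration via the $\partial_i$, then reconstruct at the top --- is the same skeleton as the paper's proof. But the reconstruction step has a genuine gap, and it is exactly the point the paper's Lemma~\ref{lem:coideal-sub-generators} is designed to handle. Your final claim that ``each term $x_i\partial_i(a)$ with $\partial_i(a)\in B$ indeed belongs to $B$'' presupposes that $x_i$ itself lies in $B$ (equivalently in $A$), and this fails in general: for $A=\langle E+\lambda K\rangle\subset\overline{U}_q(\mathfrak{sl}_2)$ with $\lambda\ne 0$, the generating subspace contains $x_1+\lambda g$ but not $x_1$, since $x_1\in A$ would force $g\in A$ while $A\cap\Gamma=\{e\}$. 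So the element $\sum_i\lambda_i x_i\partial_i(a)$ you produce agrees with $a$ modulo $F_{d-1}$ but need not lie in $B$, and the induction does not close. The paper's fix is to prove (Lemma~\ref{lem:coideal-sub-generators}(b), using the $\partial_i$ together with the projections $e_g\rightharpoonup-$) that whenever $x_k$ occurs in the leading monomial of some element of $A$, the space $A\cap F_1$ contains an element $\hat{x}_k$ whose \emph{leading term} is $x_k$; one then multiplies the $\hat{x}_k$ rather than the $x_k$. Because the correction terms of $\hat{x}_1^{m_1}\cdots\hat{x}_\theta^{m_\theta}$ still live in total degree $|\boldsymbol{m}|$, the paper must also refine the induction from the total-degree filtration to the graded lexicographic leading monomial; your induction on $|a|$ alone would not terminate even after this repair.

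A second, independent problem is the ``quantum Euler-type identity with nonzero $q$-integer coefficients.'' On a monomial $x^{\boldsymbol{m}}$ the operator $\sum_i x_i\partial_i$ acts by the scalar $\sum_i c_i(\boldsymbol{m})\,(m_i)_{\chi_i(g_i)}$, where the $c_i(\boldsymbol{m})$ are products of values $\chi_j(g_i)^{m_j}$ coming from reordering $x_i$ past $x_1^{m_1}\cdots x_{i-1}^{m_{i-1}}$. Each individual $q$-integer $(m_i)_{\chi_i(g_i)}$ is nonzero for $0<m_i<N_i$, but nothing prevents the twisted \emph{sum} from vanishing for a general datum $\mathcal{D}$, and in any case the operator is diagonal with $\boldsymbol{m}$-dependent eigenvalues on the degree-$d$ graded piece rather than scalar, so one cannot simply divide to recover a general (non-multihomogeneous) element $a$. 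You would need either to prove invertibility of this operator on each graded piece or, as the paper does, to avoid the Euler identity entirely by working one leading monomial at a time with the full string $\partial_1^{m_1}\cdots\partial_k^{m_k-1}\cdots\partial_\theta^{m_\theta}$, whose nonzero coefficient is a genuine product (not a sum) of $q$-factorials.
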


This simple-look theorem may not be helpful to classify coideal subalgebras of $U(\mathcal{D})$.
For practical applications, we also provide a theorem on coideal subalgebras of $U(\mathcal{D})$ which gives generators more explicitly.
To explain our result, we introduce some notations:
Given a subgroup $G$ of $\Gamma$, we introduce the equivalence relation $\sim_G$ on the index set $[1, \theta]$ by
\begin{equation*}
  i \sim_G j \iff
  \text{$g_i \equiv g_j$ (mod $G$)
    and $\chi_i \equiv \chi_j$ (mod $G^{\perp}$)},
\end{equation*}
where $G^{\perp} = \{ \chi \in \hat{\Gamma} \mid \text{$\chi(g) = 1$ for all $g \in G$} \}$.
Let $\mathfrak{J}$ be an equivalence class of $\sim_G$, and and write it as $\mathfrak{J} = \{ i_1, \cdots, i_n \}$, where $i_1 < \cdots < i_n$. We define
\begin{equation*}
  \xi_{\mathfrak{J}}(c_1, \cdots, c_n, a) = g_{i_1}^{} \sum_{s = 1}^{n} c_{s} g_{i_s}^{-1} x_{i_s}^{} + a g_{i_1}^{}
  \quad (c_1, \cdots, c_n, a \in \bfk).
\end{equation*}
We note that the element $\xi := \xi_{\mathfrak{J}}(c_1, \cdots, c_n, a)$ satisfies
\begin{equation}
  \label{eq:comultplication-xi}
  \Delta(\xi)
  = \xi \otimes g_{i_1}^{}
  + \sum_{s = 1}^n c_s g_{i_1}^{} g_{i_s}^{-1} \otimes x_{i_s}
  \in (\bfk \xi + \bfk G) \otimes U(\mathcal{D}).
\end{equation}
Given a matrix $C$ of $n \times (n + 1)$, we define
\begin{equation*}
  X_{\mathfrak{J}}(C) = \{ \xi_{\mathfrak{J}}(c_{r,1}, \dotsc, c_{r,n}, c_{r,n+1}) \mid r \in [1, n] \},
\end{equation*}
where $c_{r s}$ is the $(r, s)$ entry of $C$.
We say that the matrix $C$ is a {\em reduced datum} for the equivalence class $\mathfrak{J}$ if the following three conditions (RD1)--(RD3) are satisfied:
\begin{itemize}
\item [(RD1)] The matrix $C$ has the reduced row echelon form.
\item [(RD2)] The last column of $C$ is zero if $\chi_j \not \equiv \varepsilon \pmod{G^{\perp}}$.
\item [(RD3)] The matrix $C$ has the same rank as the submatrix of $C$ obtained by removing the last column from $C$. 
\end{itemize}

\begin{theorem}
  \label{thm:generators-2}
  Let $A$ be a coideal subalgebra of $U(\mathcal{D})$, and let $\mathfrak{J}_1, \cdots, \mathfrak{J}_m$ be the set of the equivalence classes of $\sim_G$, where $G = \Gamma \cap A$. Then there are reduced data $C_k$ for $\mathfrak{J}_k$ for each $k$ such that the set
  \begin{equation}
    \label{eq:coideal-sub-generators-2}
    G \cup X_{\mathfrak{J}_1}(C_1) \cup \cdots \cup X_{\mathfrak{J}_k}(C_k)
  \end{equation}
  generates $A$ as an algebra.
\end{theorem}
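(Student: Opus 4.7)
The plan is to apply Theorem~\ref{thm:generators-1}, which reduces the problem to showing that the vector space $W := A \cap V$, with $V = \Span\{g,\, g x_i \mid g \in \Gamma,\ i \in [1, \theta]\}$, is contained in the subalgebra $B \subset A$ generated by $G \cup X_{\mathfrak{J}_1}(C_1) \cup \cdots \cup X_{\mathfrak{J}_m}(C_m)$; for then $B$ contains a generating set of $A$, so $B = A$.

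To analyze $W$, I equip $U(\mathcal{D})$ with a $\Gamma$-grading via the Hopf algebra projection $\pi : U(\mathcal{D}) \to \bfk \Gamma$ defined by $\pi(x_i) = 0$ and $\pi|_{\Gamma} = \id$: the induced coaction $\rho = (\id \otimes \pi) \circ \Delta$ makes $U(\mathcal{D})$ a $\bfk \Gamma$-comodule algebra, and since $A$ and $V$ are both right coideals, $W$ inherits a grading $W = \bigoplus_{\gamma \in \Gamma} W_\gamma$ whose ambient pieces $V_\gamma$ have basis $\{\gamma\} \cup \{\gamma g_i^{-1} x_i : i \in [1, \theta]\}$. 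I further decompose by the $G$-action via conjugation, noting that $\gamma$ lies in the trivial isotype and $\gamma g_i^{-1} x_i$ in the $\chi_i|_G$-isotype; since $A$ is stable under $G$-conjugation ($G^{-1} \subset A$ by finite-dimensionality), this yields $W_\gamma = \bigoplus_{\chi \in \hat{G}} W_\gamma^\chi$.

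The heart of the argument is the coideal analysis. For $w = a_0 \gamma + \sum_i a_i \gamma g_i^{-1} x_i \in W_\gamma^\chi$, a direct computation gives
\[
  \Delta(w) = w \otimes \gamma + \sum_i a_i \gamma g_i^{-1} \otimes \gamma g_i^{-1} x_i.
\]
The condition $\Delta(w) \in A \otimes U(\mathcal{D})$, together with the $\bfk$-linear independence of the PBW elements $\gamma g_i^{-1} x_i$, forces $a_i \gamma g_i^{-1} \in A \cap \bfk \Gamma = \bfk G$, hence $a_i = 0$ unless $g_i \in \gamma G$. Combined with the isotypic constraints ($a_0 = 0$ when $\chi \ne \varepsilon$ and $a_i = 0$ when $\chi_i|_G \ne \chi$), the support of $w$ is contained in $\{i : g_i \in \gamma G,\ \chi_i|_G = \chi\}$, which is either empty or an equivalence class of $\sim_G$. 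Thus every non-trivial $W_\gamma^\chi$ matches a unique class $\mathfrak{J}_k$, and left multiplication by $G$ carries $W_{g_{i_1(k)}}^{\chi_k}$ bijectively onto $W_\gamma^{\chi_k}$ for every $\gamma$ in the coset $g_{i_1(k)} G$, where $i_1(k) = \min \mathfrak{J}_k$ and $\chi_k = \chi_{i_1(k)}|_G$.

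To finish, I construct $C_k$ as follows: choose a basis of $W_{g_{i_1(k)}}^{\chi_k}$ (modulo $\bfk g_{i_1(k)}$ when $g_{i_1(k)} \in G$ and $\chi_k = \varepsilon$, to avoid redundancy with $G$), express each basis element in the form $\xi_{\mathfrak{J}_k}(\,\cdot\,)$, collect the coefficients into a matrix, and row-reduce to reduced row echelon form. Condition (RD1) is then automatic; (RD2) holds because $a_0 = 0$ when $\chi_k \ne \varepsilon$; (RD3) holds because a row $(0, \ldots, 0, a)$ would correspond to $a g_{i_1(k)} \in W_{g_{i_1(k)}}^{\chi_k}$, which is either in $\bfk G$ (hence excluded from the basis) or not in $A$ at all (if $g_{i_1(k)} \notin G$). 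The resulting $B$ contains $\bfk G$ together with $g \cdot \xi$ for every $g \in G$ and $\xi \in X_{\mathfrak{J}_k}(C_k)$, so by the left $G$-translation isomorphisms $B$ contains every graded-isotypic piece $W_\gamma^\chi$; hence $B \supset W$ and $B = A$. The main obstacle is the coideal analysis above, which translates the abstract right-coideal condition together with the isotypic decomposition into the combinatorial equivalence relation $\sim_G$, and which then matches the reduced datum conditions (RD2) and (RD3) to the isotypic constraints.
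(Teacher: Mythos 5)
Your proof is correct, and it reaches the same two key mechanisms as the paper's proof --- (i) the coefficient of each $\gamma g_i^{-1}x_i$ in a degree-one element of a right coideal must land in $A \cap \bfk\Gamma = \bfk G$, and (ii) averaging over conjugation by $G$ splits such elements according to $\sim_G$ --- but it packages them differently. The paper does not pass through $W = A \cap \Span\{g, gx_i\}$ at all: it refines Lemma~\ref{lem:coideal-sub-generators}(b) into Lemma~\ref{lem:coideal-sub-generators-2} (producing one normalized element $\hat{x}_k$ per index $k$ via the partial derivatives $\partial_\ell$ and the character average), re-runs the leading-monomial induction from the proof of Theorem~\ref{thm:generators-1} with these $\hat{x}_k$, and only then row-reduces. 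You instead invoke Theorem~\ref{thm:generators-1} as a black box and do all the work by linear algebra on the finite-dimensional space $W$, decomposed by the $\Gamma$-grading coming from the coalgebra projection onto $\bfk\Gamma$ and by $G$-isotypes; your direct computation of $\Delta(w)$ for $w \in W_\gamma$ is exactly the content of the paper's $\partial_i$-stability of coideals, just unpacked. Your route is arguably cleaner in that it separates the inductive reduction (Theorem~\ref{thm:generators-1}) from the combinatorics of reduced data, and it makes transparent why (RD2) and (RD3) hold; the paper's route avoids introducing the grading and keeps everything element-by-element. One small correction: the map $\pi : U(\mathcal{D}) \to \bfk\Gamma$, $x_i \mapsto 0$, is \emph{not} an algebra (hence not a Hopf algebra) projection in general --- relations \eqref{eq:uD-defining-relations-3} and \eqref{eq:uD-defining-relations-4} obstruct this whenever some $\lambda_{ij}$ or $\mu_i$ is nonzero, e.g.\ for $\overline{U}_q(\mathfrak{sl}_2)$ itself --- so $\rho = (\id \otimes \pi)\circ\Delta$ does not make $U(\mathcal{D})$ a comodule \emph{algebra}. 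However, $\pi$ is a coalgebra map (via the identification of $U(\mathcal{D})$ with $U(\mathcal{D}_0)$ as coalgebras), so $\rho$ is still a comodule structure and every right coideal, in particular $W$, is still $\Gamma$-graded; since that is all your argument uses, the proof stands.
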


Theorem \ref{thm:generators-1} is an immediate consequence of Theorem \ref{thm:generators-2}.
Nevertheless, we will give a direct proof of Theorem \ref{thm:generators-1} in Subsection~\ref{subsec:proof-generators-1} to demonstrate the basic strategy. By refining the argument of Subsection~\ref{subsec:proof-generators-1}, we will prove Theorem \ref{thm:generators-2} in Subsection~\ref{subsec:proof-generators-2}.

We note that Theorem \ref{thm:generators-2} does not give a classification of coideal subalgebras of $U(\mathcal{D})$.
Given a subgroup $G$ and reduced data $C_k$ ($k \in [1, m]$), we can consider the subalgebra, say $A$, generated by \eqref{eq:coideal-sub-generators-2}.
We see that $A$ is a coideal subalgebra by \eqref{eq:comultplication-xi}.
However, as we will see in the next subsection below, $A \cap \Gamma$ does not coincide with $G$ in general.

\subsection{Examples}
\label{subsec:examples-of-thm}

Before we go into the proof of Theorems \ref{thm:generators-1} and \ref{thm:generators-2}, we demonstrate how these theorems work.
For $q \in \bfk^{\times}$ and $m \in \mathbb{Z}_{\ge 0}$, we define
\begin{equation*}
  (m)_{q} = 1 + q + \cdots + q^{m-1}
  \quad \text{and} \quad
  (m)_{q}! = (1)_{q} (2)_{q} \cdots (m)_{q}
  \quad (m > 0)
\end{equation*}
with convention $(0)_q = 0$ and $(0)_{q}! = 1$.
The following $q$-binomial formula \cite[VI.1.9]{MR1321145} will be used extensively:
For a non-negative integer $n$ and two elements $X$ and $Y$ of an algebra such that $Y X = q X Y$, we have
\begin{equation*}
  (X + Y)^n = \sum_{i = 0}^{n} \binom{n}{i}_{\!\!q} X^i Y^{n-i},
  \quad \text{where} \quad
  \binom{n}{i}_{\!\!q} = \frac{(n)_{q}!}{(i)_{q}!(n-i)_{q}!}.
\end{equation*}

\begin{example}
  \label{ex:pointed-rank-1}
  Let $N$ and $m$ be integers with $1 \le m < N$,
  let $\omega \in \bfk$ be an $N$-th root of unity such that $\omega^m \ne 1$,
  and let $\mu \in \{ 0, 1 \}$.
  We assume $\mu = 0$ if ($N \mid m N_1$ or $\omega^{N_1} \ne 1$), where $N_1 = \mathrm{ord}(\omega^m)$.
  With the use of Theorem \ref{thm:generators-2}, we classify coideal subalgebras of the Hopf algebra $H = U(\mathcal{D})$ associated to the datum \eqref{eq:datum} with $\theta = 1$, $\Gamma = \langle g \mid g^{N} = e \rangle$, $g_1 = g^m$, $\chi_1(g^a) = \omega^a$ ($a \in \mathbb{Z}$) and $\mu_1 = \mu$.
  By definition, $H$ is generated by a grouplike element $g$ and a $(g^m, 1)$-skew primitive element $x$ ($=x_1$) subject to $g^{N} = 1$, $g x = \omega x g$ and $x^{N_1} = \mu(1 - g^{m N_1})$.
  Let $A$ be a coideal subalgebra of $H$, and set $G = A \cap \Gamma$.
  Since $\Gamma$ is a cyclic group, we have $G = \langle g^r \rangle$ for some positive divisor $r$ of $N$.
  We note that the condition $\chi_1 \equiv \varepsilon \pmod{G^{\perp}}$ is equivalent to $\omega^{r} = 1$.
  \begin{itemize}
  \item We first discuss the case where $\omega^r = 1$.
    In this case, a reduced datum is either of the form $(0 \ 0)$ or $(1 \ \lambda)$ for some $\lambda \in \bfk$. Correspondingly, $A$ is either one of $A' := \langle g^{r} \rangle$ or $A''_{\lambda} := \langle g^{r}, v_{\lambda} \rangle$, where $v_{\lambda} = x + \lambda g^{m}$.
    It is obvious that $A' \cap \Gamma = G$.
    The coideal subalgebra $A''_{\lambda}$ satisfies $A''_{\lambda} \cap \Gamma = G$ if and only if the following condition holds:
    \begin{equation}
      \label{eq:pointed-rank-1-eq-2}
      \text{$\lambda^{N_1} = \mu$ \quad or \quad $r \mid m N_1$}
    \end{equation}
    Indeed, by the $q$-binomial formula, we have
    \begin{equation}
      \label{eq:pointed-rank-1-eq-1}
      v_{\lambda}^{N_1} = x^{N_1} + (\lambda g^{m})^{N_1} = \mu + (\lambda^{N_1} - \mu) g^{m N_1}.
    \end{equation}
    We suppose that \eqref{eq:pointed-rank-1-eq-2} holds.
    It follows from \eqref{eq:pointed-rank-1-eq-1} that the set
    \begin{equation*}
      \{ g^{r k} v_{\lambda}^{\ell} \mid k \in [0, N/r), \ell \in [0, N_1) \}
    \end{equation*}
    spans the vector space $A''_{\lambda}$.
    By using the basis \eqref{eq:uD-basis} of $H$, we see that the set $\{ v_{\lambda}^{\ell} \mid \ell \in [0, N_1) \}$ is linearly independent over $\bfk \Gamma$.
    Hence we have
    \begin{equation*}
      A''_{\lambda} \cap \bfk \Gamma = \Span \{ g^{r k} \mid k \in [0, N/r) \} = \bfk G,
    \end{equation*}
    and the equation $A''_{\lambda} \cap \Gamma = G$ is easily obtained from this.
    On the other hand, if \eqref{eq:pointed-rank-1-eq-2} does not hold, then $g^{m N_1}$ belongs to $A''_{\lambda} \cap \Gamma$ by \eqref{eq:pointed-rank-1-eq-1} but does not to $\langle g^r \rangle$. Hence $A''_{\lambda} \cap \Gamma \ne G$. The claim has been verified.
  \item Next, we discuss the case where $\omega^r \ne 1$.
    Since a reduced datum is either $(0 \ 0)$ or $(1 \ 0)$ in this case, $A$ is either $A' = \langle g^{r} \rangle$ or $A''_{0} = \langle g^r, x \rangle$.
    It is obvious that $A' \cap \Gamma = G$.
    By the same argument as above, $A''_{0} = \langle g^{r}, x \rangle$ satisfies $A''_{0} \cap \Gamma = G$ if and only if \eqref{eq:pointed-rank-1-eq-2} with $\lambda = 0$ is satisfied.
  \end{itemize}
  By summarizing the discussion so far, we conclude that the coideal subalgebras of $H$ are $\langle g^r \rangle$ and $\langle g^{r}, v_{\lambda} \rangle$, where $r$ is a positive divisor of $N$ and $\lambda$ is an element of $\bfk$ satisfying \eqref{eq:pointed-rank-1-eq-2} and ($\omega^r \ne 1 \Rightarrow \lambda = 0$).
  Here are some sub-examples:
  \begin{enumerate}
  \item Assume $m = 1$, $\mathrm{ord}(\omega) = N$ and $\mu = 0$.
    The above discussion yields that the coideal subalgebras of $H$ are $\langle g^r \rangle$, $\langle g^r, x \rangle$ and $\langle v_{\lambda} \rangle$, where $r$ is a positive divisor of $N$ and $\lambda \in \bfk^{\times}$ (the case where $\lambda = 0$ is excluded since $\langle v_{\lambda} \rangle = \langle g^r, x \rangle$ if $\lambda = 0$ and $r = N$).
    This recovers the classification result of coideal subalgebras of the Taft algebra given in \cite{MR4125589}.
  \item Assume $N = p^n$, $\mathrm{ord}(\omega) = p^{k}$, $m = 1$ and $\mu = 1$, where $p$ is a prime number and $n$ and $k$ are integers such that $0 < k < n$.
    The coideal subalgebras of $H$ are $\langle g^{p^i} \rangle$ and $\langle g^{p^{i}}, x + \lambda_i g \rangle$ for $i \in [0, n]$, where $\lambda_i$ is an element of $\bfk$ such that $\lambda_i = 0$ if $i < k$ and $\lambda_i^{N_i} = 1$ if $i > k$ (the parameter $\lambda_k$ can be arbitrary).
  \end{enumerate}
\end{example}

\begin{example}
  \label{ex:CSA-gr-Uq-sl2}
  We fix an odd integer $N > 1$ and a root of unity $q$ of order $N$.
  There is a Hopf algebra $H$ generated by a grouplike element $g$ and $(g,1)$-skew primitive elements $x_1$ and $x_2$ subject to $g^N = 1$, $g x_1 = q^{2} x_1 g$, $g x_2 = q^{-2} x_2 g$, $x_1^N = x_2^N = 0$ and $x_2 x_1 = q^2 x_1 x_2$.
  The Hopf algebra $H$ is isomorphic to $U(\mathcal{D})$, where $\mathcal{D}$ is a datum \eqref{eq:datum} with $\theta = 2$, $\Gamma = \langle g \mid g^N = e \rangle$, $g_1 = g_2 = g$, $\chi_1(g) = q^2$, $\chi_2(g) = q^{-2}$ and $\lambda_{1 2} = \mu_1 = \mu_2 = 0$. We also note that $H$ is isomorphic to the graded Hopf algebra $\mathrm{gr}\,\overline{U}_q(\mathfrak{sl}_2)$, where the filtration on $\overline{U}_q(\mathfrak{sl}_2)$ is given by $\deg(E) = \deg(F) = 1$ and $\deg(K) = 0$ (see Subsection \ref{subsec:Uq-sl2}).
  We classify coideal subalgebras of $H$. Let $A$ be a coideal subalgebra of $U(\mathcal{D})$, and set $G = A \cap \Gamma$.
  \begin{itemize}
  \item Suppose that $G = \{ e \}$.
    Then we have $1 \sim_G 2$.
    A reduced datum $C$ for the equivalence class $\{ 1, 2 \}$ is either of the form
    \begin{equation*}
      \begin{pmatrix} 0 & 0 & 0 \\ 0 & 0 & 0 \end{pmatrix}, \quad
      \begin{pmatrix} 1 & \alpha & \beta \\ 0 & 0 & 0 \end{pmatrix}, \quad
      \begin{pmatrix} 0 & 1 & \beta \\ 0 & 0 & 0 \end{pmatrix} \quad \text{or} \quad
      \begin{pmatrix} 1 & 0 & \alpha \\ 0 & 1 & \beta \end{pmatrix},
    \end{equation*}
    for some $\alpha, \beta, \lambda, \mu \in \bfk$.
    Correspondingly, $A$ is either one of $\bfk$, $\langle u_{\alpha, \beta} \rangle$, $\langle w_{\beta} \rangle$ or $\langle v_{\alpha}, w_{\beta} \rangle$, where $u_{\alpha,\beta} := x_1 + \alpha x_2 + \beta g$,
    $v_{\alpha} = x_1 + \alpha g$ and $w_{\beta} = x_2 + \beta g$.
    It is trivial that the intersection of $\bfk$ with $\Gamma$ is $\{ e \}$.
    The intersection of $\langle u_{\alpha,\beta} \rangle$ with $\Gamma$ is also equal to $\{ e \}$ since it is commutative and every non-unit element of $\Gamma$ does not commute with $u_{\alpha, \beta}$. By the same reason, we have $\langle w_{\beta} \rangle \cap \Gamma = \{ e \}$. Now we consider the subalgebra $A'_{\alpha,\beta} := \langle v_{\alpha}, w_{\beta} \rangle$.
    By using the $q$-binomial formula, we see that the generators satisfy
    \begin{equation*}
      w_{\beta} v_{\alpha} = q^2 v_{\alpha} w_{\beta}, \quad
      v_{\alpha}^N = \alpha^N \quad \text{and} \quad
      w_{\beta}^N = \beta^N.
    \end{equation*}
    This implies that $A'_{\alpha,\beta}$ is spanned by $\{ v_{\alpha}^k w_{\beta}^{\ell} \mid k, \ell \in [0, N) \}$.
    By using the basis \eqref{eq:uD-basis} of $H$, we see that this set is linearly independent over $\bfk \Gamma$. Therefore we have $A'_{\alpha,\beta} \cap \Gamma = \{ e \}$.
  \item Suppose that $G = \langle g^r \rangle$ for some $r$ with $r \mid N$ and $0 < r < N$.
    By our assumption that $N$ is odd, we have $1 \not \sim_G 2$.
    Furthermore, we have $\chi_i \not \equiv \varepsilon \pmod{G^{\perp}}$ ($i = 1, 2$).
    Thus a reduced datum $C_i$ ($i = 1, 2$) for the equivalence class $[i]$ is either of the form $(0 \ 0)$ or $(1 \ 0)$.
    Correspondingly to the case where $(C_1, C_2) = ((0 \ 0), (0 \ 0))$, $((1 \ 0), (0 \ 0))$, $((0 \ 0), (1 \ 0))$ and $((1 \ 0), (1 \ 0))$, the coideal subalgebra $A$ is either one of $\langle g^r \rangle$, $\langle g^r, x_1 \rangle$, $\langle g^r, x_2 \rangle$, $\langle g^r, x_1, x_2 \rangle$.
    By using the basis \eqref{eq:uD-basis} of $U(\mathcal{D})$, it is easy to check that the intersection of each of them and $\Gamma$ is equal to $\langle g^r \rangle$.
  \end{itemize}
  For $r = N$, we have $\langle g^r \rangle = \bfk$,
  $\langle g^r, x_1 \rangle = \langle v_0 \rangle$,
  $\langle g^r, x_2 \rangle = \langle w_0 \rangle$ and
  $\langle g^r, x_1, x_2 \rangle = \langle v_0, w_0 \rangle$.
  By summarizing the discussion so far, the coideal subalgebras of $H$ are
  \begin{gather*}
    \langle g^r \rangle, \quad \langle g^r, x_1 \rangle, \quad \langle g^r, x_2 \rangle, \quad \langle g^r, x_1, x_2 \rangle, \\
    \langle x_1 + \alpha x_2 + \beta g \rangle \quad ((\alpha, \beta) \ne (0,0)),
    \quad \langle x_2 + \beta g \rangle \quad (\beta \ne 0), \\
    \langle x_1 + \alpha g, x_2 + \beta g \rangle \quad ((\alpha, \beta) \ne (0,0)),
  \end{gather*}
  where $r$ is a positive divisor of $N$.
\end{example}

\subsection{A technique of partial derivatives}

We now introduce the main technique for proving Theorems \ref{thm:generators-1} and \ref{thm:generators-2}.
Till the end of this section, we fix a datum $\mathcal{D}$ as in \eqref{eq:datum} and define
\begin{equation*}
  \mathbb{I} = \{ (m_1, \cdots, m_{\theta}) \in \mathbb{Z}^{\theta} \mid
  \text{$m_i \in [0, N_i)$ ($i = 1, \cdots, \theta$)} \}.
\end{equation*}
An element of $\mathbb{I}$ is expressed by a bold face letter, such as $\boldsymbol{m}$, and the $i$-th entry of $\boldsymbol{m} \in \mathbb{I}$ is written as $m_i$ by using the corresponding letter.
We define
\begin{equation*}
  |\boldsymbol{m}| = m_1 + \cdots + m_{\theta}
  \quad \text{and} \quad
  x^{\boldsymbol{m}} = x_1^{m_1} \cdots x_{\theta}^{m_{\theta}}
\end{equation*}
for $\boldsymbol{m} \in \mathbb{I}$. An element of $U(\mathcal{D})$ of this form is called a {\em monomial}.
With this notation, the basis \eqref{eq:uD-basis} is written as $\{ g x^{\boldsymbol{m}} \mid g \in \Gamma, \boldsymbol{m} \in \mathbb{I} \}$.
By \eqref{eq:uD-comultiplication-1} and the $q$-binomial formula, the comultiplication is given by
\begin{equation}
  \label{eq:uD-comultiplication-2}
  \Delta(g x^{\boldsymbol{m}})
  = \sum_{\boldsymbol{i} + \boldsymbol{j} = \boldsymbol{m}}
  c(\boldsymbol{m}, \boldsymbol{i}, \boldsymbol{j}) \,
  g x^{\boldsymbol{i}} \otimes g g_1^{i_1} \cdots g_r^{i_r} x^{\boldsymbol{j}},
\end{equation}
where $\boldsymbol{i}$ and $\boldsymbol{j}$ run over all elements of $\mathbb{I}$ satisfying $\boldsymbol{i} + \boldsymbol{j} = \boldsymbol{m}$, and
\begin{equation*}
  c(\boldsymbol{m}, \boldsymbol{i}, \boldsymbol{j})
  = \prod_{r = 1}^{\theta} \binom{m_r}{i_r}_{\chi_r(g_r)}
  \cdot \prod_{r = 1}^{\theta} \prod_{s = 1}^{r-1} \chi_s(g_r)^{-i_r j_s}.
\end{equation*}

Mimicking techniques used in the study of Nichols algebras \cite{MR506406,MR1913436}, we introduce linear operators $\partial_i$ on $U(\mathcal{D})$ for each $i \in [1, \theta]$ behaving like partial derivation with respect to $x_i$.
For this purpose, we first consider the datum $\mathcal{D}_0$ obtained from $\mathcal{D}$ by changing scalar parameters $\lambda_{i j}$ and $\mu_i$ to zero.
For each $i \in [1, \theta]$, there is a 2-dimensional representation $\rho_i$ of $U(\mathcal{D}_0)$ given by
\begin{equation*}
  \rho_i(g) =
  \begin{pmatrix} \chi_i(g) & 0 \\ 0 & 1 \end{pmatrix}
  \quad (g \in \Gamma)
  \quad \text{and}
  \quad \rho_i(x_j) = \begin{pmatrix} 0 & 0 \\ \delta_{i j} & 0 \end{pmatrix}
  \quad (j = [1, \theta]),
\end{equation*}
where $\delta_{i j}$ is Kronecker's delta. For $u \in U(\mathcal{D}_0)$, we let $\alpha_i(u) \in \bfk$ and $\xi_i(u) \in \bfk$ be the (1,1) entry and the (2,1) entry of $\rho_i(u)$, respectively.
The multiplicativity of the map $\rho_i$ implies that the equations
\begin{equation}
  \label{eq:partial-der-eq-1}
  \alpha_i(u v) = \alpha_i(u) \alpha_i(v)
  \quad \text{and} \quad
  \xi_i(u v) = \xi_i(u) \alpha_i(v) + \varepsilon(u) \xi_i(v)
\end{equation}
hold for all $u, v \in U(\mathcal{D}_0)$.
Since $C = U(\mathcal{D}_0)$ is a right $C$-comodule, it has a natural structure of a left $C^*$-module as we have recalled in Subsection~\ref{subsec:convention}.
Using the left action of $C^*$, we define linear operators $\tau_i$ and $\partial_i$ on $U(\mathcal{D}_0)$ by
\begin{equation}
  \label{eq:partial-der-def}
  \tau_i(u) = \alpha_i \rightharpoonup u
  \quad \text{and} \quad
  \partial_i(u) = \xi_i \rightharpoonup u
\end{equation}
for $u \in U(\mathcal{D}_0)$, respectively. By \eqref{eq:partial-der-eq-1}, we have
\begin{equation}
  \label{eq:partial-der-eq-2}
  \tau_i(u v) = \tau_i(u) \tau_i(v)
  \quad \text{and} \quad
  \partial_i(u v) = \partial_i(u) \tau_i(v) + u \cdot \partial_i(v)
\end{equation}
for all $u, v \in U(\mathcal{D}_0)$.
The linear operator $\tau_i$ is an algebra automorphism of $U(\mathcal{D}_0)$ such that $\tau_i(g) = \chi_i(g) g$ for $g \in \Gamma$ and $\tau_i(x_j) = \chi_i(g_j) x_j$ for $j \in [1, \theta]$. The linear operator $\partial_i$ behaves like the partial derivation with respect to $x_i$ in the following sense:

\begin{lemma}
  \label{lem:partial-der-formula}
  For $g \in \Gamma$ and $\boldsymbol{m} \in \mathbb{I}$, we have
  \begin{equation}
    \label{eq:partial-der-eq-3}
    \partial_i(g x^{\boldsymbol{m}})
    = (m_i)_{q_i} g x_1^{m_1} \cdots x_{i}^{m_i-1} \cdots x_{\theta}^{m_{\theta}},
  \end{equation}
  where $q_i = \chi_i(g_i)$.
\end{lemma}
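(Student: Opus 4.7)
The plan is to prove the formula by induction on $|\boldsymbol{m}|$ using the twisted Leibniz rule \eqref{eq:partial-der-eq-2}, after first computing $\partial_i$ on generators. From the definition $\partial_i(u) = u_{(1)} \langle \xi_i, u_{(2)} \rangle$ together with the explicit matrix entries of $\rho_i$, one reads off $\xi_i(g) = 0$ for $g \in \Gamma$ and $\xi_i(x_j) = \delta_{ij}$, which gives the base cases $\partial_i(g) = g \cdot \xi_i(g) = 0$ and $\partial_i(x_j) = 1 \cdot \xi_i(x_j) + x_j \cdot \xi_i(g_j) = \delta_{ij}$. In parallel, $\tau_i$ is the algebra automorphism determined by $\tau_i(g) = \chi_i(g) g$ and $\tau_i(x_j) = \chi_i(g_j) x_j$, so $\tau_i(x_i^{n}) = q_i^{n} x_i^{n}$.

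Next I would handle monomials in a single variable. For $j = i$, an induction on $n$ using \eqref{eq:partial-der-eq-2} gives
\begin{equation*}
  \partial_i(x_i^{n}) = \partial_i(x_i) \tau_i(x_i^{n-1}) + x_i \partial_i(x_i^{n-1}) = q_i^{n-1} x_i^{n-1} + x_i \cdot (n-1)_{q_i} x_i^{n-2} = (n)_{q_i} x_i^{n-1},
\end{equation*}
using the identity $(n)_{q_i} = q_i^{n-1} + (n-1)_{q_i}$. For $j \neq i$, the same recursion yields $\partial_i(x_j^{n}) = 0$ inductively, since $\partial_i(x_j) = 0$. Finally, any product of $x_j$'s with $j \neq i$ is annihilated by $\partial_i$ by repeated application of Leibniz.

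To obtain the general formula, factor $g x^{\boldsymbol{m}} = g \cdot L \cdot x_i^{m_i} \cdot R$ with $L = x_1^{m_1} \cdots x_{i-1}^{m_{i-1}}$ and $R = x_{i+1}^{m_{i+1}} \cdots x_\theta^{m_\theta}$, and apply the Leibniz rule stepwise. Since $\partial_i(g) = \partial_i(L) = \partial_i(R) = 0$, only the derivative of the middle factor survives, producing $g L \cdot (m_i)_{q_i} x_i^{m_i - 1} \cdot \tau_i(R)$, and $\tau_i(R)$ is a scalar multiple of $R$ by the homomorphism property of $\tau_i$. The main bookkeeping obstacle is collecting and simplifying the scalar twist factors $\prod_{j>i} \chi_i(g_j)^{m_j}$ coming from $\tau_i(R)$ and absorbing them into the claimed coefficient $(m_i)_{q_i}$; an alternative cleaner route is to apply $\xi_i$ directly to the second tensor factor of $\Delta(g x^{\boldsymbol{m}})$ as given by \eqref{eq:uD-comultiplication-2}. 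Because $\xi_i$ vanishes on every monomial except $x_i$ (by the matrix form of $\rho_i$ combined with \eqref{eq:partial-der-eq-1}), only the term with $\boldsymbol{j} = e_i$ contributes, and its coefficient simplifies through $\binom{m_i}{1}_{q_i} = (m_i)_{q_i}$ to give the stated identity.
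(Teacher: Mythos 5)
Your overall strategy coincides with the paper's: compute $\partial_i$ and $\tau_i$ on the generators, prove $\partial_i(x_i^m)=(m)_{q_i}x_i^{m-1}$ and $\partial_i(x_j^m)=0$ for $j\ne i$ by induction using the twisted Leibniz rule \eqref{eq:partial-der-eq-2}, and then assemble the general monomial. The base cases and the single-variable inductions are correct and match the paper's proof.

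The gap is at the last step, and you have in fact put your finger on it without resolving it. Writing $g x^{\boldsymbol{m}}=gL\,x_i^{m_i}R$ with $L=x_1^{m_1}\cdots x_{i-1}^{m_{i-1}}$ and $R=x_{i+1}^{m_{i+1}}\cdots x_{\theta}^{m_{\theta}}$, the rule \eqref{eq:partial-der-eq-2} gives
\begin{equation*}
  \partial_i(gx^{\boldsymbol{m}})
  = gL\,\partial_i(x_i^{m_i})\,\tau_i(R)
  = (m_i)_{q_i}\Bigl(\prod_{j>i}\chi_i(g_j)^{m_j}\Bigr)
  g x_1^{m_1}\cdots x_i^{m_i-1}\cdots x_{\theta}^{m_{\theta}},
\end{equation*}
and the scalar $\prod_{j>i}\chi_i(g_j)^{m_j}$ cannot be ``absorbed into $(m_i)_{q_i}$'': it depends on the exponents $m_j$ with $j\ne i$, and it is not $1$ in general. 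Already for the $\overline{U}_q(\mathfrak{sl}_2)$ datum one has $\chi_1(g_2)=q^2$, so this computation yields $\partial_1(x_1x_2)=q^2x_2$ rather than $x_2$; the hypothesis $\chi_i(g_j)\chi_j(g_i)=1$ does not remove the factor. Your proposed alternative via \eqref{eq:uD-comultiplication-2} has the same defect: the only surviving coefficient is $c(\boldsymbol{m},\boldsymbol{i},\boldsymbol{j})$ with $\boldsymbol{j}$ the $i$-th unit vector, and besides $\binom{m_i}{1}_{q_i}$ it contains the product $\prod_{r>i}\chi_i(g_r)^{-m_r}$ coming from the second factor of $c$, which you silently drop. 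So, as a proof of the identity exactly as stated, the attempt is incomplete at precisely the one nontrivial point. To be fair, the paper's own proof is equally terse here, and the discrepancy is only by a nonzero scalar, which is all that the later applications (e.g.\ Lemma~\ref{lem:coideal-sub-generators}) actually require; but a complete argument must either exhibit a convention under which this twist is trivial or state the formula only up to such a unit.
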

\begin{proof}
  By~\eqref{eq:partial-der-eq-2} and induction on $m$, we have
  \begin{equation*}
    \partial_i(x_i^m) = (m)_{q_i} x_i^{m - 1}
    \quad \text{and} \quad
    \partial_i(x_j^m) = 0 \quad (i \ne j).
  \end{equation*}
  We also have $\partial_i(a) = 0$ for all $a \in \bfk \Gamma$.
  Now \eqref{eq:partial-der-eq-3} follows from \eqref{eq:partial-der-eq-2}.
\end{proof}

By the formula \eqref{eq:uD-comultiplication-2} of the comultiplication, we see that the linear map
\begin{equation*}
  U(\mathcal{D}_0) \to U(\mathcal{D}),
  \quad g x^{\boldsymbol{m}}
  \mapsto g x^{\boldsymbol{m}}
  \quad (g \in \Gamma, \boldsymbol{m} \in \mathbb{I})
\end{equation*}
is an isomorphism of coalgebras.
We use this isomorphism to identify $U(\mathcal{D})$ with $U(\mathcal{D}_0)$ as a coalgebra and regard $\partial_i$ as a linear operator on $U(\mathcal{D})$.

\begin{lemma}
  \label{lem:partial-der-coideal}
  Every right coideal of $U(\mathcal{D})$ is stable under $\partial_i$.
\end{lemma}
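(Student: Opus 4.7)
The plan is to reduce the statement to the general fact that any right subcomodule of a coalgebra $C$ is stable under the left action $\rightharpoonup$ of every functional in $C^*$, as recalled in Subsection~\ref{subsec:convention}.

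First I would unpack the definition of $\partial_i$: by construction in \eqref{eq:partial-der-def}, $\partial_i(u) = \xi_i \rightharpoonup u$, where $\xi_i \in U(\mathcal{D}_0)^*$ and $\rightharpoonup$ denotes the canonical left action of the dual coalgebra on a right comodule (here the comodule is $U(\mathcal{D}_0)$ viewed over itself via its own comultiplication). The authors have just observed, via formula~\eqref{eq:uD-comultiplication-2}, that the $\bfk$-linear map $U(\mathcal{D}_0) \to U(\mathcal{D})$ sending each basis monomial $g x^{\boldsymbol{m}}$ to itself is an isomorphism of coalgebras; under this identification, $\xi_i$ transports to a functional on $U(\mathcal{D})$ and $\partial_i$ transports to the operator $\xi_i \rightharpoonup (-)$ acting on $U(\mathcal{D})$ regarded as a right comodule over itself.

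Next, to conclude, let $V \subseteq U(\mathcal{D})$ be a right coideal. By definition $\Delta(V) \subseteq V \otimes U(\mathcal{D})$, so for $v \in V$ we may write $\Delta(v) = v_{(1)} \otimes v_{(2)}$ with $v_{(1)} \in V$ (using Sweedler notation with the understanding that the first tensor factor lies in $V$). Then
\begin{equation*}
  \partial_i(v) \;=\; \xi_i \rightharpoonup v \;=\; v_{(1)} \, \langle \xi_i, v_{(2)} \rangle \;\in\; V,
\end{equation*}
which gives $\partial_i(V) \subseteq V$ as desired.

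There is no serious obstacle here: the content of the lemma is essentially formal once one recognizes $\partial_i$ as a specialization of the canonical $C^*$-action on a right $C$-comodule, and the only potentially delicate point, namely that the coalgebra structures of $U(\mathcal{D})$ and $U(\mathcal{D}_0)$ coincide under the monomial identification (so that the notion of right coideal in $U(\mathcal{D})$ matches the one used to define $\partial_i$ on $U(\mathcal{D}_0)$), has already been dispatched by the authors immediately above the lemma.
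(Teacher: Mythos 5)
Your proof is correct and follows the same route as the paper's: both identify $\partial_i$ as the left action of the functional $\xi_i \in U(\mathcal{D}_0)^*$, observe that a right coideal is precisely a left $C^*$-submodule, and transfer the conclusion to $U(\mathcal{D})$ via the coalgebra identification with $U(\mathcal{D}_0)$.
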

\begin{proof}
  The coalgebra $C := U(\mathcal{D}_0)$ is a left module over $C^*$, and a right coideal of $C$ is nothing but a left $C^*$-submodule of $C$.
  The claim for $\mathcal{D}_0$ follows from that $\partial_i$ is defined by the left action of an element of $C^*$. The general case follows from that $U(\mathcal{D})$ is identified with $U(\mathcal{D}_0)$ as a coalgebra as explained in the above.
\end{proof}

\subsection{Proof of Theorem~\ref{thm:generators-1}}
\label{subsec:proof-generators-1}

We introduce the graded lexicographic order for monomials in $U(\mathcal{D})$.
Namely, given two elements $\boldsymbol{m}, \boldsymbol{n} \in \mathbb{I}$, we regard the monomial $x^{\boldsymbol{m}}$ as a monomial greater than $x^{\boldsymbol{n}}$ if either one of the following two conditions (1) or (2) holds:
\begin{itemize}
\item [(1)] $|\boldsymbol{m}| > |\boldsymbol{n}|$.
\item [(2)] $|\boldsymbol{m}| = |\boldsymbol{n}|$ and there exists an element $k \in [1, \theta]$ such that $m_{\ell} = n_{\ell}$ for all $\ell \in [1, k)$ and $m_k > n_k$.
\end{itemize}

Every element of $U(\mathcal{D})$ is expressed as a polynomial of the form $\sum_{\boldsymbol{m} \in \mathbb{I}} c_{\boldsymbol{m}} x^{\boldsymbol{m}}$ ($c_{\boldsymbol{m}} \in \bfk \Gamma$) in a unique way. For the discussion, we borrow the basic terminology for polynomials: If a non-zero element $y \in U(\mathcal{D})$ is expressed as $y = c x^{\boldsymbol{m}} + \mbox{}$(lower terms) for some $\boldsymbol{m} \in \mathbb{I}$ and $c \in \bfk \Gamma \setminus \{ 0 \}$, then we call $c$, $x^{\boldsymbol{m}}$, $c x^{\boldsymbol{m}}$ and $\boldsymbol{m}$ the leading coefficient, the leading monomial, the leading term and the degree of $y$, respectively.

Unlike the case of polynomials over a field, the leading coefficient of an element of $U(\mathcal{D})$ may not be invertible.
We provide Lemma \ref{lem:eg-action} below to remedy this difficulty.
For $g \in \Gamma$, we define $e_g \in U(\mathcal{D})^*$ by
\begin{equation}
  \label{eq:eg-def}
  e_g(h x^{\boldsymbol{m}})
  = \delta_{g,h} \delta_{m_1,0} \dotsb \delta_{m_{\theta},0}
  \quad (h \in \Gamma, \boldsymbol{m} \in \mathbb{I}),
\end{equation}
where $\delta$ is Kronecker's delta. It is straightforward to prove:

\begin{lemma}
  \label{lem:eg-action}
  For $h \in \Gamma$ and $\ell \in [1, \theta]$, we have
  \begin{equation}
    \label{eq:eg-action}
    e_g \rightharpoonup h = \delta_{g, h} h
    \quad \text{and} \quad
    e_g \rightharpoonup h x_{\ell}
    = \delta_{g, h g_{\ell}} h x_{\ell}.
  \end{equation}
\end{lemma}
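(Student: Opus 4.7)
The plan is to prove both formulas by a direct computation of the coaction. Recall from Subsection~\ref{subsec:convention} that, since $U(\mathcal{D})$ is a right comodule over itself via $\Delta$, the induced left action of $U(\mathcal{D})^*$ is given in Sweedler notation by $f \rightharpoonup u = u_{(1)} \langle f, u_{(2)} \rangle$. So each statement reduces to computing $\Delta$ on the relevant element and then pairing the right-hand tensor factor with $e_g$ via the definition \eqref{eq:eg-def}.

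For the first formula, grouplike-ness yields $\Delta(h) = h \otimes h$, and evaluating $e_g$ on the right-hand factor using \eqref{eq:eg-def} with $\boldsymbol{m} = \boldsymbol{0}$ gives $\langle e_g, h \rangle = \delta_{g,h}$; hence $e_g \rightharpoonup h = \delta_{g,h}\, h$. For the second, I would combine the two halves of \eqref{eq:uD-comultiplication-1} via multiplicativity of $\Delta$ to obtain
\[
\Delta(h x_\ell) = (h \otimes h)(x_\ell \otimes g_\ell + 1 \otimes x_\ell) = h x_\ell \otimes h g_\ell + h \otimes h x_\ell.
\]
Pairing the right-hand factors with $e_g$, one reads off $\langle e_g, h g_\ell \rangle = \delta_{g, h g_\ell}$ and $\langle e_g, h x_\ell \rangle = 0$, the latter because $h x_\ell$ has exponent vector $\boldsymbol{m}$ with $m_\ell = 1 \ne 0$, so the product of Kronecker deltas in \eqref{eq:eg-def} vanishes. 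This produces $e_g \rightharpoonup h x_\ell = \delta_{g, h g_\ell}\, h x_\ell$, as claimed.

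There is no substantial obstacle: the result is essentially definitional, and the authors themselves call it straightforward. The only point requiring minimal care is keeping the right-comodule convention consistent and noting that $e_g$ is supported on $\Gamma$, so that of the two summands in $\Delta(h x_\ell)$ only the grouplike tensor factor $h g_\ell$ contributes to the pairing.
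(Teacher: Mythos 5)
Your computation is correct and is exactly the definitional verification the paper has in mind; the paper itself omits the proof, stating only that the lemma is straightforward. Both formulas follow, as you show, from $f \rightharpoonup u = u_{(1)}\langle f, u_{(2)}\rangle$, the comultiplication formulas \eqref{eq:uD-comultiplication-1}, and the fact that $e_g$ vanishes on every basis element $h x^{\boldsymbol{m}}$ with $\boldsymbol{m} \ne \boldsymbol{0}$.
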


By applying the technique of the previous subsection, we prove:

\begin{lemma}
  \label{lem:coideal-sub-generators}
  Let $A$ be a coideal subalgebra of $U(\mathcal{D})$, and let $y$ be a non-zero element of $A$ with leading coefficient $c$. Then the following hold:
  \begin{enumerate}
  \item [(a)] The leading coefficient $c$ of $y$ belongs to $A$.
  \item [(b)] Let $\boldsymbol{m} = (m_1, \cdots, m_{\theta})$ be the degree of $y$, and let $k \in [1, \theta]$. If $m_k > 0$, then there is an element of $A$ of the form
    \begin{equation*}
      x_k + \sum_{\ell = k + 1}^{\theta} \beta_{\ell} g_k g_{\ell}^{-1} x_{\ell} + \alpha g_k
      \quad (\beta_{k+1}, \cdots, \beta_{\theta}, \alpha \in \bfk).
    \end{equation*}
  \end{enumerate}
\end{lemma}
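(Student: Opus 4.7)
The plan is to combine two tools: the partial-derivative operators $\partial_i$, under which every right coideal of $U(\mathcal{D})$ is stable by Lemma~\ref{lem:partial-der-coideal}, and the functionals $e_g \in U(\mathcal{D})^*$ of Lemma~\ref{lem:eg-action}, whose left action $\rightharpoonup$ likewise preserves $A$ since $A$ is a right coideal.

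For (a), set $\partial^{\boldsymbol{m}} := \partial_1^{m_1} \circ \cdots \circ \partial_\theta^{m_\theta}$ and apply this composite to $y$. By Lemma~\ref{lem:partial-der-formula}, iterating the formula for $\partial_i$ sends a basis monomial $g x^{\boldsymbol{n}}$ to a non-zero scalar multiple of $g x^{\boldsymbol{n}-\boldsymbol{m}}$ when $n_i \ge m_i$ for every $i$, and to zero otherwise. For any monomial $x^{\boldsymbol{n}}$ occurring in $y$ with $\boldsymbol{n} \ne \boldsymbol{m}$, either $|\boldsymbol{n}| < |\boldsymbol{m}|$ (so the output is zero for total-degree reasons), or $|\boldsymbol{n}| = |\boldsymbol{m}|$ and the inequalities $n_i \ge m_i$ together with equal total degree would force $\boldsymbol{n} = \boldsymbol{m}$, a contradiction. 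Hence $\partial^{\boldsymbol{m}}(y) = \bigl(\prod_i (m_i)_{q_i}!\bigr) c$; the scalar is non-zero because $m_i < N_i$ is less than the order of $q_i = \chi_i(g_i)$, so $c \in A$.

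For (b), assume $m_k > 0$ and apply $\partial^{\boldsymbol{m} - \boldsymbol{e}_k}$ to $y$, where $\boldsymbol{e}_k$ is the $k$-th standard vector. The same survival analysis shows that the contributing monomials in $y$ must have $\boldsymbol{n} = \boldsymbol{m}$, $\boldsymbol{n} = \boldsymbol{m} - \boldsymbol{e}_k$, or $\boldsymbol{n} = \boldsymbol{m} - \boldsymbol{e}_k + \boldsymbol{e}_\ell$ for some index $\ell \ne k$; the constraint $\boldsymbol{n} \le \boldsymbol{m}$ in graded-lex order then rules out $\ell < k$, leaving only $\ell > k$ in the last case. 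Consequently one obtains an element $z \in A$ of the form
\begin{equation*}
  z = c_1 x_k + \sum_{\ell > k} d_\ell x_\ell + e,
\end{equation*}
with $c_1, d_\ell, e \in \bfk\Gamma$ and $c_1$ a non-zero scalar multiple of $c$.

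To finish, choose any $h_0 \in \Gamma$ whose coefficient in $c$ is non-zero and act on $z$ by $e_{h_0 g_k} \rightharpoonup$. Lemma~\ref{lem:eg-action} isolates a single group element in each summand and produces an element of $A$ of the shape
\begin{equation*}
  h_0 \bigl( c_1(h_0)\, x_k + \sum_{\ell > k} \beta_\ell\, g_k g_\ell^{-1} x_\ell + \alpha\, g_k \bigr),
\end{equation*}
in which the coefficient of $x_k$ is non-zero. By (a) we already know $c \in A$, so applying $e_{h_0} \rightharpoonup c = c(h_0)\, h_0$ gives $h_0 \in A$, hence $h_0^{-1} \in A$ because $\Gamma$ is finite. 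Left-multiplying by $h_0^{-1}$ and rescaling yields an element of the form demanded by (b). The main obstacle is the survival bookkeeping in (b): one must verify carefully that, among monomials of the same total degree as $\boldsymbol{m}$ and strictly below $\boldsymbol{m}$ in the graded-lex order, the ones sent to a nonzero value by $\partial^{\boldsymbol{m} - \boldsymbol{e}_k}$ are precisely those of the form $\boldsymbol{m} - \boldsymbol{e}_k + \boldsymbol{e}_\ell$ with $\ell > k$, since this is exactly what forces the index range $\ell > k$ in the conclusion.
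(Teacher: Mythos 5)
Your proof is correct and follows essentially the same route as the paper: part (a) via $\partial^{\boldsymbol{m}}$, and part (b) by applying $\partial^{\boldsymbol{m}-\boldsymbol{e}_k}$ and then projecting with $e_{h_0 g_k}$, with the same graded-lex survival analysis forcing $\ell > k$. The only (harmless) difference is at the end: the paper invokes $A \cap \bfk\Gamma = \bfk(A\cap\Gamma)$ to pick a group element of $A$ in the support of $c$, whereas you obtain $h_0 \in A$ directly from $e_{h_0} \rightharpoonup c$, which amounts to the same thing.
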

\begin{proof}
  Part (a) follows from Lemma \ref{lem:partial-der-coideal} and
  \begin{equation*}
    c = \frac{1}{(m_1)_{q_1}! \cdots (m_{\theta})_{q_{\theta}}!}
    \cdot \partial_1^{m_1} \cdots \partial_{\theta}^{m_{\theta}}(y).
  \end{equation*}
  We now assume that $m_k > 0$. By Lemma \ref{lem:partial-der-coideal}, the element
  \begin{equation*}
    u := \frac{(m_k)_{q_k}}{(m_1)_{q_1}! \cdots (m_{\theta})_{q_{\theta}}!}
    \cdot \partial_1^{m_1} \cdots \partial_k^{m_k-1} \cdots \partial_{\theta}^{m_{\theta}}(y)
  \end{equation*}
  belongs to $A$.
  By the definition of the order of monomials, the element $u$ is of the form $u = c x_k + \sum_{\ell = k + 1}^{\theta} b_{\ell} x_{\ell} + a$ for some $b_{k+1}, \cdots, b_{\theta}, a \in \bfk \Gamma$.
  Given $z \in \bfk \Gamma$ and $g \in \Gamma$, we denote by $z(g) \in \bfk$ the coefficient of $g$ in $z$ so that $z = \sum_{g \in \Gamma} z(g) g$.
  Since $c$ is a non-zero element of $A \cap \bfk \Gamma = \bfk G$, where $G = A \cap \Gamma$, there is an element $g \in G$ such that $c(g) \ne 0$. We fix such an element $g$.
  By \eqref{eq:eg-action}, we have
  \begin{equation*}
    e_{g g_k} \rightharpoonup u = c(g) g x_k +
    \sum_{\ell = k + 1}^{\theta} b_{\ell}(g g_k g_{\ell}^{-1}) g g_k g_{\ell}^{-1} x_{\ell}
    + a(g g_k) g g_k.
  \end{equation*}
  Letting $\beta_{\ell} = c(g)^{-1} b_{\ell}(g g_k g_{\ell}^{-1})$ and $\alpha = c(g)^{-1} a(g g_k)$, we have
  \begin{equation*}
    x_k + \sum_{\ell = k + 1}^{\theta} \beta_{\ell} g_k g_{\ell}^{-1} x_{\ell} + \alpha g_k
    = c(g)^{-1} g^{-1} (e_{g g_k} \rightharpoonup u) \in A.
  \end{equation*}
  The proof is done.
\end{proof}

\begin{proof}[Proof of Theorem~\ref{thm:generators-1}]
  Let $A$ be a coideal subalgebra of $U(\mathcal{D})$, and let $A'$ be the subalgebra of $A$ generated by the set \eqref{eq:coideal-sub-generators-1}.
  It is trivial that $A' \subset A$. The converse inclusion is proved by mathematical induction as follows:
  Let $y$ be a non-zero element of $A$ of degree $\boldsymbol{m}$.
  If $\boldsymbol{m} = \boldsymbol{0}$, then it is obvious that $y \in A'$.
  We assume that $\boldsymbol{m} \ne \boldsymbol{0}$ and that every non-zero element of $A$ whose degree is smaller than $\boldsymbol{m}$ belongs to $A'$.
  We define $\hat{x}_k$ ($k = 1, \cdots, \theta$) as follows:
  If $m_k = 0$, then we set $\hat{x}_k = 1$.
  Otherwise, we let $\hat{x}_k$ be an element of $A$ whose leading term is $x_k$, which exists by Lemma~\ref{lem:coideal-sub-generators}.
  By the definition of the monomial order, we see that the leading term of $\hat{x}_1^{m_1} \dotsb \hat{x}_{\theta}^{m_{\theta}}$ is $x^{\boldsymbol{m}}$.
  Let $c$ be the leading coefficient of $y$, which belongs to $A$ by Lemma~\ref{lem:coideal-sub-generators}.
  By the induction hypothesis, we have
  \begin{equation*}
    y - c \hat{x}_1^{m_1} \dotsb \hat{x}_{\theta}^{m_{\theta}} \in A'.
  \end{equation*}
  Since $c, \hat{x}_1, \cdots, \hat{x}_{\theta} \in A'$, we have $y \in A'$. The proof is done.
\end{proof}

\subsection{Proof of Theorem~\ref{thm:generators-2}}
\label{subsec:proof-generators-2}

We prove Theorem~\ref{thm:generators-2} by refining the choice of generators in the proof of Theorem~\ref{thm:generators-1} by a linear algebraic method.
We first improve Lemma \ref{lem:coideal-sub-generators} (b) as follows:

\begin{lemma}
  \label{lem:coideal-sub-generators-2}
  Let $A$ be a coideal subalgebra of $U(\mathcal{D})$, let $y$ be a non-zero element of $A$ with degree $\boldsymbol{m}$, and let $k \in [1, \theta]$ be any index.
  Set $G = A \cap \Gamma$. If $m_k > 0$, then there is an element $v \in A$ of the form
  \begin{equation*}
    v = x_k + \sum_{\ell = k + 1}^{\theta} \beta_{\ell}^{} g_k^{} g_{\ell}^{-1} x_{\ell}
    + \alpha g_k
    \quad (\beta_{k+1}^{}, \cdots, \beta_{\theta}^{}, \alpha \in \bfk)
  \end{equation*}
  such that the following two conditions are satisfied:
  \begin{enumerate}
  \item $\beta_{\ell}^{} = 0$ if $k \not \sim_G \ell$.
  \item $\alpha = 0$ if $\chi_k \not \equiv \varepsilon \pmod{G^{\perp}}$.
  \end{enumerate}
\end{lemma}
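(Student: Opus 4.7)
The plan is to sharpen Lemma~\ref{lem:coideal-sub-generators}(b) in two independent steps: first the right-coideal property of $A$ will force $\beta_\ell = 0$ whenever $g_k g_\ell^{-1} \notin G$, and then a twisted averaging over $G$-conjugation will impose the remaining character conditions. Throughout, I use that $G \subset A$ and that $A$, being a right coideal, is a left $U(\mathcal{D})^*$-submodule of $U(\mathcal{D})$ under the $\rightharpoonup$-action.

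Let $u = x_k + \sum_{\ell > k} \beta_\ell\, g_k g_\ell^{-1} x_\ell + \alpha g_k \in A$ be the element given by Lemma~\ref{lem:coideal-sub-generators}(b). A direct computation using skew-primitivity of the $x_i$'s yields
$$
\Delta(u) = u \otimes g_k + 1 \otimes x_k + \sum_{\ell > k} \beta_\ell\, g_k g_\ell^{-1} \otimes g_k g_\ell^{-1} x_\ell.
$$
For each $\ell > k$ with $g_k g_\ell^{-1} \notin G$, introduce $f_\ell \in U(\mathcal{D})^*$ defined on the basis \eqref{eq:uD-basis} by $\langle f_\ell,\, h x^{\boldsymbol{n}} \rangle = \delta_{h,\, g_k g_\ell^{-1}}\, \delta_{\boldsymbol{n},\, \boldsymbol{\epsilon}_\ell}$, where $\boldsymbol{\epsilon}_\ell \in \mathbb{I}$ is the unit vector with a $1$ in position $\ell$ (which lies in $\mathbb{I}$ because $N_\ell \geq 2$ by \eqref{eq:datum-disconnectedness}). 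Simultaneous matching of the group-like label and the multi-index shows that $f_\ell$ annihilates every summand of $\Delta(u)$ except $\beta_\ell\, g_k g_\ell^{-1} \otimes g_k g_\ell^{-1} x_\ell$. Hence $f_\ell \rightharpoonup u = \beta_\ell\, g_k g_\ell^{-1} \in A \cap \bfk \Gamma = \bfk G$, and since $g_k g_\ell^{-1} \notin G$ this forces $\beta_\ell = 0$.

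With those terms eliminated, I then form the twisted average
$$
v := \frac{1}{|G|} \sum_{g \in G} \chi_k(g)^{-1}\, g u g^{-1} \in A.
$$
Using $g x_i g^{-1} = \chi_i(g) x_i$ from \eqref{eq:uD-defining-relations-2} together with the commutativity of $\Gamma$, the coefficient of $g_k g_\ell^{-1} x_\ell$ in $v$ becomes $\beta_\ell \cdot |G|^{-1} \sum_{g \in G} \chi_k(g)^{-1} \chi_\ell(g)$, and the coefficient of $g_k$ becomes $\alpha \cdot |G|^{-1} \sum_{g \in G} \chi_k(g)^{-1}$. Character orthogonality on the finite abelian group $G$ (valid since $\mathrm{char}(\bfk) = 0$) says that these averages equal $1$ when $\chi_k \equiv \chi_\ell$, respectively $\chi_k \equiv \varepsilon$, modulo $G^\perp$, and $0$ otherwise. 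Combined with Step~1, this gives $v$ the required form. The main point requiring care is the book-keeping in Step~1, namely verifying that $f_\ell$ picks out exactly one summand of $\Delta(u)$ without any spurious contribution; all subsequent computations are routine.
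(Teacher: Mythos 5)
Your proof is correct and follows essentially the same two-step route as the paper: extract the coefficient $\beta_{\ell}^{}g_k^{}g_{\ell}^{-1}$ by hitting $u$ with a dual functional (the paper uses the already-constructed operator $\partial_{\ell} = \xi_{\ell}\rightharpoonup(-)$, which gives exactly $\partial_{\ell}(u)=\beta_{\ell}^{}g_k^{}g_{\ell}^{-1}\in A\cap\bfk\Gamma=\bfk G$, where you build an ad hoc functional $f_\ell$ to the same effect), and then perform the identical twisted average $|G|^{-1}\sum_{g\in G}\chi_k(g)^{-1}gug^{-1}$ with character orthogonality. The only difference is cosmetic, and your bookkeeping for $f_\ell$ checks out.
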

\begin{proof}
  By Lemma \ref{lem:coideal-sub-generators}, there is an element of $A$ of the form
  \begin{equation*}
    u = x_k + \sum_{\ell = k + 1}^{\theta} \beta_{\ell}^{} g_k^{} g_{\ell}^{-1} x_{\ell} + \alpha g_k
    \quad (\beta_{k+1}^{}, \cdots, \beta_{\theta}^{}, \alpha \in \bfk).
  \end{equation*}
  Since $\beta_{\ell}^{} g_k^{} g_{\ell}^{-1} = \partial_{\ell}(u) \in A \cap \bfk \Gamma = \bfk G$, we have $\beta_{\ell}^{} = 0$ if $g_k^{} \not \equiv g_{\ell}^{} \pmod{G}$. For simplicity of notation, we set $u_k = x_k$, $u_{\ell} = \beta_{\ell}^{} g_k^{} g_{\ell}^{-1} x_{\ell}$, $u_{\theta + 1} = \alpha g_k$ and $\chi_{\theta + 1} = \varepsilon$. Then we have $u = u_{k} + \cdots + u_{\theta + 1}$ and
  \begin{equation}
    \label{eq:coideal-sub-generators-2-proof-1}    
    g u g^{-1} = \chi_k(g) u_k + \dotsb + \chi_{\theta + 1}(g) u_{\theta + 1}
    \quad (g \in \Gamma).
  \end{equation}
  Now we define $v = |G|^{-1} \sum_{g \in G} \chi_k(g)^{-1} g u g^{-1}$.
  This is an element of $A$ since $u \in A$ and $G \subset A$.
  By~\eqref{eq:coideal-sub-generators-2-proof-1} and orthogonality of characters, we have
  \begin{equation*}
    v = \sum_{\ell = k}^{\theta + 1} \varepsilon_{\ell} u_{\ell},
    \quad \text{where} \ \varepsilon_{\ell} =
    \begin{cases}
      1 & \text{if $\chi_k \equiv \chi_{\ell}$ (mod $G^{\perp}$)}, \\
      0 & \text{otherwise},
    \end{cases}
  \end{equation*}
  and thus $v$ has the desired form. The proof is done.
\end{proof}

\begin{proof}[Proof of Theorem~\ref{thm:generators-2}]
  Let $A$ be a coideal subalgebra of $U(\mathcal{D})$. As in the statement of the theorem, we let $\mathfrak{J}_1, \cdots, \mathfrak{J}_m$ be the set of the equivalence classes of $\sim_G$, where $G = \Gamma \cap A$.
  For each $k = 1, \cdots, \theta$, we define $\hat{x}_k \in A$ as follows:
  \begin{itemize}
  \item [(1)] If there is a non-zero element of $A$ such that $x_k$ appears in its leading monomial, then we define $\hat{x}_k$ as follows:
    We write the equivalence class of $[k]$ as $[k] = \{ i_1, \dotsc, i_n \}$, where $i_1 < \dotsb < i_n$.
    By Lemma \ref{lem:coideal-sub-generators-2}, $A$ has an element of the form
    \begin{gather*}
      u := x_{k} + c_{1} g_{k}^{} g_{i_{r+1}}^{-1} x_{i_{r+1}} + \cdots + c_n g_{k}^{} g_{i_{n}}^{-1} x_{i_n} + a g_{k} \\
      (\text{$c_{1}, \cdots, c_n, a \in \bfk$ and
        $a = 0$ if $\chi_k \not \equiv \varepsilon$ (mod $G^{\perp}$)}),
    \end{gather*}
    where $r$ is the index such that $k = i_r$.
    We now set $\hat{x}_k = g_{i_1}^{} g_{k}^{-1} u$, which belongs to $A$ since $u \in A$ and $i_1 \sim_G k$.
  \item [(2)] If (1) is not the case, we set $\hat{x}_k = 1$.
  \end{itemize}
  We see that the set $G \cup \{ \hat{x}_1, \cdots, \hat{x}_{\theta} \}$ generates $A$ by the same argument as the proof of Theorem \ref{thm:generators-1}.
  Moreover, by the construction of $\hat{x}_1, \cdots, \hat{x}_k$, there is a row echelon matrix $C_s$ for each $s = 1, \cdots, m$ such that
  \begin{equation*}
    \{ \hat{x}_k \mid k \in \mathfrak{J}_s \} \setminus \{ 1 \}
    = X_{\mathfrak{J}_s}(C_s) \setminus \{ 0 \}
    \quad (s = 1, \cdots, m).
  \end{equation*}
  Thus the set $G \cup X_{\mathfrak{J}_1}(C_1) \cup \cdots \cup X_{\mathfrak{J}_m}(C_m)$ generates $A$. For each $s$, we can retake the matrix $C_s$ to satisfy (RD1)--(RD3) in the following way:
  \begin{itemize}
  \item [(1)] The condition (RD2) is satisfied by the construction of $\hat{x}_1, \cdots, \hat{x}_{\theta}$.
  \item [(2)] Since the space spanned by the set $X_{\mathfrak{J}_s}(C_s)$ does not change if we replace $C_s$ with $P C_s$ for some invertible matrix $P$, we may assume that $C_s$ is a matrix of reduced row echelon form by performing the Gaussian elimination.
    Note that, after this process, (RD2) is still satisfied.
  \item [(3)] We assume that (RD1) and (RD2) are satisfied in view of (1) and (2), and let $B_s$ be the submatrix of $C_s$ obtained by removing the last column. Suppose that $C_s$ violates (RD3). Then the $r$-th row of $C_s$, where $r$ is the rank of $B_s$, is $(0,0,\cdots,0,1)$. If we change the $(r,n+1)$-th entry of $C_s$ to zero, then the set $X_{\mathfrak{J}_s}(C_s)$ loses the element $g_{j_s}^{}$. However, this does not matter since $g_{j_s}^{}$ is already contained in $G$.
  \end{itemize}
  The proof is done.
\end{proof}

\section{Coideal subalgebras of $\overline{U}_q(\mathfrak{sl}_2)$}
\label{sec:CSA-Uq-sl2}

\subsection{The Hopf algebra $\overline{U}_q(\mathfrak{sl}_2)$}
\label{subsec:Uq-sl2}

In this section, $\bfk$ is an algebraically closed field of characteristic zero.
Let $N > 1$ be an odd integer, and let $q \in \bfk$ be a primitive $N$-th root of unity.
We define $\overline{U}_q(\mathfrak{sl}_2)$ to be the algebra generated by $K$, $E$ and $F$ subject to the defining relations
\begin{gather*}
  K^N = 1,
  \ K E = q^2 E K,
  \ K F = q^{-2} F K,
  \ E^N = F^N = 0,
  \ E F - F E = \frac{K - K^{-1}}{q - q^{-1}}.
\end{gather*}
The algebra $\overline{U}_q(\mathfrak{sl}_2)$ is a Hopf algebra with the comultiplication $\Delta$, the counit $\varepsilon$ and the antipode $S$ determined by
\begin{gather*}
  \Delta(K) = K \otimes K,
  \ \Delta(E) = E \otimes K + 1 \otimes E,
  \ \Delta(F) = F \otimes 1 + K^{-1} \otimes F, \\
  \ \varepsilon(K) = 1,
  \ \varepsilon(E) = \varepsilon(F) = 0,
  \ S(K) = K^{-1},
  \ S(E) = -E K^{-1},
  \ S(F) = -K F.
\end{gather*}

The Hopf algebra $\overline{U}_q(\mathfrak{sl}_2)$ is an example of the Hopf algebra $U(\mathcal{D})$ constructed from a datum \eqref{eq:datum}.
More precisely, we set $\theta = 2$, $\Gamma = \langle g \mid g^N = e \rangle$, $g_1 = g_2 = g$, $\chi_1(g) = q^2$, $\chi_2(g) = q^{-2}$, $\lambda_{1 2} = 1$ and $\mu_1 = \mu_2 = 0$. Then $U(\mathcal{D})$ is generated by the grouplike element $g$ and $(g,1)$-skew primitive elements $x_1$ and $x_2$ subject to
\begin{gather*}
  g^N = 1,
  \quad g x_1 = q^{2} x_1 g,
  \quad g x_2 = q^{-2} x_2 g, \\
  x_1^N = x_2^N = 0,
  \quad \text{and} \quad x_2 x_1 - q^2 x_1 x_2 = 1 - g^2.
\end{gather*}
Hence the Hopf algebra $U(\mathcal{D})$ is identified with $\overline{U}_q(\mathfrak{sl}_2)$ by putting
\begin{equation*}
  K = g, \quad E = x_1 \quad \text{and} \quad F = \frac{1}{q-q^{-1}} g^{-1} x_2.
\end{equation*}

\subsection{Coideal subalgebras of $\overline{U}_q(\mathfrak{sl}_2)$}

We classify coideal subalgebras of $\overline{U}_q(\mathfrak{sl}_2)$ with the help of Theorem \ref{thm:generators-2}. To simplify the computation, we introduce
\begin{equation*}
  \tilde{F} = (q-q^{-1}) K F.
\end{equation*}

\begin{theorem}
  \label{thm:CSA-Uq-sl-2}
  A complete list of coideal subalgebras of $\overline{U}_q(\mathfrak{sl}_2)$ is
  \begin{gather*}
    \overline{U}_q(\mathfrak{sl}_2),
    \quad \langle K^r \rangle,
    \quad \langle K^r, E \rangle,
    \quad \langle K^r, \tilde{F} \rangle
    \quad (\text{$r$ is a positive divisor of $N$}), \\
    \langle E + \alpha \tilde{F} + \beta K \rangle
    \quad (\alpha, \beta \in \bfk, (\alpha, \beta) \ne (0,0)),
    \quad \langle \tilde{F} + \beta K \rangle \quad (\beta \in \bfk, \beta \ne 0), \\
    \quad \langle E + \lambda K, \tilde{F} + \mu K \rangle
    \quad (\lambda, \mu \in \bfk, (1-q^2) \lambda \mu = 1).
  \end{gather*}
\end{theorem}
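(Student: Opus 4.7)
The plan is to apply Theorem~\ref{thm:generators-2} to the presentation of $\overline{U}_q(\mathfrak{sl}_2)$ as $U(\mathcal{D})$ given in Subsection~\ref{subsec:Uq-sl2}, and then case by case sift the candidate subalgebras it produces to retain only those with the prescribed intersection with $\Gamma$ inside $\overline{U}_q(\mathfrak{sl}_2)$ (rather than inside its associated graded, which was treated in Example~\ref{ex:CSA-gr-Uq-sl2}). As preparation, since $N$ is odd and $q$ has order $N$, the characters $\chi_1$, $\chi_2$ and $\chi_1 \chi_2^{-1}$ are trivial on $G = \langle K^r \rangle$ only when $G = \{e\}$; consequently $1 \sim_G 2$ holds iff $G = \{e\}$, and whenever $G \ne \{e\}$ condition (RD2) forces the constant-term column of every reduced datum to vanish.

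For $G = \langle K^r \rangle$ with $r < N$, the equivalence classes are $\{1\}$ and $\{2\}$, each admitting only the reduced data $(0\ 0)$ or $(1\ 0)$; this yields four candidates $\langle K^r \rangle$, $\langle K^r, E \rangle$, $\langle K^r, \tilde F\rangle$ and $\langle K^r, E, \tilde F\rangle$. For the first three, the PBW basis of $\overline{U}_q(\mathfrak{sl}_2)$ immediately exhibits them as proper coideal subalgebras with $A \cap \Gamma = \langle K^r\rangle$. The fourth one contains
\begin{equation*}
  \tilde F E - q^2 E \tilde F = 1 - K^2,
\end{equation*}
which is a direct rewriting of the $[E,F]$-relation, and so contains $K^2$ and hence (as $N$ is odd) the whole group $\Gamma$; it therefore collapses to $\overline{U}_q(\mathfrak{sl}_2)$.

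For $G = \{e\}$ the only equivalence class is $\{1,2\}$, and reduced data are arbitrary $2\times 3$ matrices in reduced row echelon form. Enumerating these produces the candidates $\langle E + \alpha \tilde F + \beta K\rangle$, $\langle \tilde F + \beta K\rangle$ and $\langle E + \lambda K, \tilde F + \mu K\rangle$ (together with $\bfk$, already covered by $\langle K^N\rangle$). For the two rank-one families I would argue, by expanding the powers $u^i$ of the generator in the PBW basis and isolating the leading $E^i$ or $\tilde F^i$ contributions, that $1, u, \dots, u^{N-1}$ remain linearly independent modulo $\bfk(\Gamma \setminus \{1\})$, giving $A \cap \Gamma = \{e\}$. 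The degenerate parameters $(\alpha,\beta) = (0,0)$ and $\beta = 0$ are excluded because $\langle E\rangle$ and $\langle \tilde F\rangle$ already appear as the $r = N$ instances of $\langle K^r, E\rangle$ and $\langle K^r, \tilde F\rangle$ from Case~I.

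The main obstacle is the rank-two family $\langle v, w\rangle$ with $v = E + \lambda K$ and $w = \tilde F + \mu K$. A direct computation using the identity $\tilde F E - q^2 E \tilde F = 1 - K^2$ together with $\tilde F K = q^2 K \tilde F$ and $K E = q^2 E K$ yields
\begin{equation*}
  w v - q^2 v w = 1 - \bigl(1 - (1-q^2)\lambda\mu\bigr) K^2.
\end{equation*}
If $(1-q^2)\lambda\mu \ne 1$ this forces $K^2 \in \langle v,w\rangle$, hence (as $N$ is odd) $K \in \langle v,w\rangle$, so $\langle v,w\rangle = \overline{U}_q(\mathfrak{sl}_2)$. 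If $(1-q^2)\lambda\mu = 1$ the relation becomes $w v = q^2 v w + 1$, and I would then check by a $q$-binomial computation that $v^N = \lambda^N$ and $w^N = \mu^N$ in $\overline{U}_q(\mathfrak{sl}_2)$, so that $\{v^i w^j : 0 \le i,j < N\}$ spans $\langle v, w\rangle$; linear independence of this set in $\overline{U}_q(\mathfrak{sl}_2)$ follows by examining the leading $E^i \tilde F^j$ terms in the PBW basis. This gives $\dim \langle v,w\rangle = N^2$ and $A \cap \Gamma = \{e\}$, while also anticipating the Taft-algebra structure asserted in Theorem~\ref{thm:CSA-v-lambda-w-mu-structure}. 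Assembling the two cases recovers exactly the list in the theorem.
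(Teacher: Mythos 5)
Your strategy coincides with the paper's: identify $\overline{U}_q(\mathfrak{sl}_2)$ with $U(\mathcal{D})$, apply Theorem~\ref{thm:generators-2}, observe that $1 \sim_G 2$ holds iff $G = \{e\}$ (and that (RD2) kills the constant column otherwise), enumerate the reduced data, and then discard the candidates whose intersection with $\Gamma$ exceeds $G$. Your use of $\tilde F E - q^2 E \tilde F = 1 - K^2$ to collapse $\langle K^r, E, \tilde F\rangle$, and of $w v - q^2 v w = 1 - \bigl(1-(1-q^2)\lambda\mu\bigr)K^2$ to split the rank-two family, is exactly what the paper does, as is the spanning/independence argument (Lemma~\ref{lem:CSA-v-w-relations}) in the case $(1-q^2)\lambda\mu = 1$.

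The one step that does not close as written is the claim that $\langle u\rangle \cap \Gamma = \{e\}$ for $u = E + \alpha\tilde F + \beta K$ (and for $\tilde F + \beta K$). Linear independence of $1, u, \dots, u^{N-1}$ modulo $\bfk(\Gamma\setminus\{1\})$ is not sufficient, because $\langle u\rangle$ is spanned by \emph{all} powers of $u$, and a power $u^k$ with $k \ge N$ could a priori produce a nontrivial group element; this is precisely the phenomenon of Example~\ref{ex:pointed-rank-1}, where $v_\lambda^{N_1} = \mu + (\lambda^{N_1}-\mu)g^{mN_1}$ forces the extra constraint \eqref{eq:pointed-rank-1-eq-2}. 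Ruling this out along your lines would require controlling $u^N$, i.e.\ essentially the minimal-polynomial computation of Theorem~\ref{thm:CSA-u-alpha-beta-min-pol}, which in the paper is carried out only \emph{after} (and using) the fact that $\langle u\rangle\cap\Gamma=\{e\}$. The paper's argument avoids this: $\langle u\rangle$ is commutative, while $K^k u K^{-k}$ rescales the $E$-component (resp.\ the $\tilde F$-component) of $u$ by $q^{\pm 2k} \ne 1$ for $k \not\equiv 0 \pmod N$, so no non-identity element of $\Gamma$ commutes with $u$ and hence none can lie in $\langle u\rangle$. Substituting that observation closes the gap. A minor additional point: in the case $G = \{e\}$ the reduced data are not arbitrary RREF matrices; condition (RD3) is what excludes rows of the form $(0\ 0\ 1)$, although your enumeration happens to list only the admissible shapes.
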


\begin{proof}
  We take $\mathcal{D}$ as in the previous subsection and identify $\overline{U}_q(\mathfrak{sl}_2)$ with $U(\mathcal{D})$.
  Then the generators of $U(\mathcal{D})$ are expressed by $g = K$, $x_1 = E$ and $x_2 = \tilde{F}$.
  Let $A$ be a coideal subalgebra of $U(\mathcal{D})$.
  \begin{itemize}
  \item We first consider the case where $A \cap \Gamma = \{ e \}$.
    By the same argument as Example~\ref{ex:CSA-gr-Uq-sl2}, we find that $A$ is either one of $\bfk$, $\langle u_{\alpha, \beta} \rangle$, $\langle w_{\beta} \rangle$ or $\langle v_{\lambda}, w_{\mu} \rangle$ for some $\alpha, \beta, \lambda, \mu \in \bfk$, where $u_{\alpha,\beta} := x_1 + \alpha x_2 + \beta g$, $v_{\lambda} = x_1 + \lambda g$ and $w_{\mu} = x_2 + \mu g$.
    Again by the same argument as Example~\ref{ex:CSA-gr-Uq-sl2}, we can verify that $A' \cap \Gamma = \{ e \}$ holds when $A'$ is either one of the subalgebras $\bfk$, $\langle u_{\alpha, \beta} \rangle$ or $\langle w_{\beta} \rangle$.
    The subalgebra $B_{\lambda,\mu} := \langle v_{\lambda}, w_{\mu} \rangle$ does not always satisfy $B_{\lambda,\mu} \cap \Gamma = \{ e \}$ unlike Example~\ref{ex:CSA-gr-Uq-sl2}.
    In the present setting, one has
    \begin{equation}
      \label{eq:ex-uq-sl2-eq-1}
      B_{\lambda,\mu} \cap \Gamma = \{ e \} \iff (1-q^2) \lambda \mu = 1.
    \end{equation}
    To see this, we note the following equation:
    \begin{equation}
      \label{eq:ex-uq-sl2-eq-2}
      w_{\mu} v_{\lambda} - q^2 v_{\lambda} w_{\mu}
      = 1 + \{ (1 - q^2) \lambda \mu - 1 \} g^2.
    \end{equation}
    If $(1-q^2) \lambda \mu = 1$, then we have $B_{\lambda,\mu} \cap \Gamma = \{ e \}$ by the same argument as in Example~\ref{ex:CSA-gr-Uq-sl2}.
    If $(1 - q^2) \lambda \mu \ne 1$, then we have $g^2 \in B_{\lambda,\mu}$ from \eqref{eq:ex-uq-sl2-eq-2} and, in particular, $B_{\lambda,\mu} \cap \Gamma \ne \{ e \}$.
    The proof of \eqref{eq:ex-uq-sl2-eq-1} is done.
  \item Next, we consider the case where $A \cap \Gamma = \langle g^r \rangle$ for some $r$ with $r \mid N$ and $0 < r < N$.
    By the same argument as Example~\ref{ex:CSA-gr-Uq-sl2}, we find that $A$ is either one of $\langle g^r \rangle$, $\langle g^r, x_1 \rangle$, $\langle g^r, x_2 \rangle$, $\langle g^r, x_1, x_2 \rangle$. It is easy to see that $A' \cap \Gamma = \langle g^r \rangle$ if $A' = \langle g^r, x_1 \rangle$ or $A' = \langle g^r, x_2 \rangle$.
    Since $x_1$ and $x_2$ generate $U(\mathcal{D})$ unlike the case of Example~\ref{ex:CSA-gr-Uq-sl2}, the subalgebra $A' = \langle g^r, x_1, x_2 \rangle$ satisfies $A' \cap \Gamma = \langle g^r \rangle$ if and only if $r = 1$.
  \end{itemize}
  If $r = N$, then we have $\langle g^r \rangle = \bfk$, $\langle g^r, x_1 \rangle = \langle u_{0,0} \rangle$ and $\langle g^r, x_2 \rangle = \langle w_{0} \rangle$.
  By summarizing the discussion so far, we have the list of coideal subalgebras as stated. The proof is done.
\end{proof}

The coideal subalgebra $\langle K^r \rangle$ is isomorphic to the group algebra of the cyclic group of order $N/r$. To describe the structure of other coideal subalgebras, it is convenient to introduce the following algebra:

\begin{definition}
  \label{def:Taft-like-algebra}
  For a positive integer $m$, a positive divisor $n$ of $m$ and a root of unity $\xi \in \bfk$ of order $n$, we define $T_{m,n}(\xi)$ to be the algebra generated by $\mathtt{x}$ and $\mathtt{g}$ subject to the relations $\mathtt{x}^m = 0$, $\mathtt{g}^n = 1$ and $\mathtt{g} \mathtt{x} = \xi \mathtt{x} \mathtt{g}$.
\end{definition}

The algebra $T_{N,N}(q)$ is what is called the Taft algebra in the study of Hopf algebras.
Many of coideal subalgebras of $\overline{U}_q(\mathfrak{sl}_2)$ are Taft like:
Indeed, it is easy to see that $\langle K^r, E \rangle$ and $\langle K^r, K F \rangle$ are isomorphic to $T_{N, N/r}(q^{2N/r})$ as algebras.
Later, it turns out that the coideal subalgebra $\langle E + \alpha K, \tilde{F} + \beta K \rangle$ with $(1-q^2) \alpha \beta = 1$ is isomorphic to $T_{N,N}(q^2)$ as an algebra (see Theorem~\ref{thm:CSA-v-lambda-w-mu-structure}).

Table~\ref{tab:CSA-Uq-sl2} shows the coideals subalgebras of $\overline{U}_q(\mathfrak{sl}_2)$ and their dimension.
The coideal subalgebra $\langle E + \alpha K, \tilde{F} + \beta K \rangle$ with $(1-q^2) \alpha \beta = 1$ will be discussed in Subsection \ref{subsec:CSA-v-alpha-w-beta}.
The coideal subalgebra generated by $u_{\alpha,\beta} = E + \alpha \tilde{F} + \beta K$ and the minimal polynomial of $u_{\alpha,\beta}$ will be discussed in Subsection~\ref{subsec:CSA-u-alpha-beta}.

Figure~\ref{fig:CSA-Uq-sl2} is a Hasse-like diagram of the lattice of coideal subalgebras of $\overline{U}_q(\mathfrak{sl}_2)$.
In the diagram, Hopf subalgebras are annotated by a star ($\bigstar$).
Coideal subalgebras of the same dimension are arranged at the same height.
The elements $v_{\lambda}$ and $w_{\mu}$ are those appeared in the proof of the theorem:
\begin{equation*}
  v_{\lambda} = E + \lambda K, \quad w_{\mu} = \tilde{F} + \mu K
  \quad (\lambda, \mu \in \bfk).
\end{equation*}
Horizontal double arrows ($\Rightarrow$) mean to specialize a parameter. The coideal subalgebra $\langle v_{\lambda} \rangle$ appears twice because of the visibility of the diagram.
The coideal subalgebra $\langle E + \alpha \tilde{F} + \beta K \rangle$ ($\alpha, \beta \in \bfk$) comes in the diagram as $\langle a v_{\lambda} + \mu w_{\mu} \rangle$ with appropriately chosen $a$, $b$, $\lambda$ and $\mu$ (see Subsection~\ref{subsec:CSA-u-alpha-beta}).

\begin{table}
  \centering
  \renewcommand{\arraystretch}{1.5}
  \begin{tabular}{ccclccl}
    $A$ & Parameters & $A \cap \Gamma$ & $\dim(A)$ & Remarks \\ \hline
    $\overline{U}_q(\mathfrak{sl}_2)$ & --- & $\Gamma$ & $N^3$ \\
    $\langle K^r \rangle$ & $r \mid N$ & $\langle K^r \rangle$ & $N/r$ \\
    $\langle K^r, E \rangle$ & $r \mid N$ & $\langle K^r \rangle$ & $N^2/r$ \\
    $\langle K^r, \tilde{F} \rangle$ & $r \mid N$ & $\langle K^r \rangle$ & $N^2/r$ \\
    $\langle E + \alpha K, \tilde{F} + \beta K \rangle$ & $(1-q^2) \alpha \beta = 1$ & $\{ e \}$ & $N^2$ & \S\ref{subsec:CSA-v-alpha-w-beta} \\
    $\langle E + \alpha \tilde{F} + \beta K \rangle$ & $(\alpha, \beta) \ne (0,0)$ & $\{ e \}$ & $N$ & \S\ref{subsec:CSA-u-alpha-beta} \\
    $\langle \tilde{F} + \beta K \rangle$ & $\beta \ne 0$ & $\{ e \}$ & $N$ \\
  \end{tabular}
  \bigskip
  \caption{Coideal subalgebras of $\overline{U}_q(\mathfrak{sl}_2)$}
  \label{tab:CSA-Uq-sl2}
\end{table}

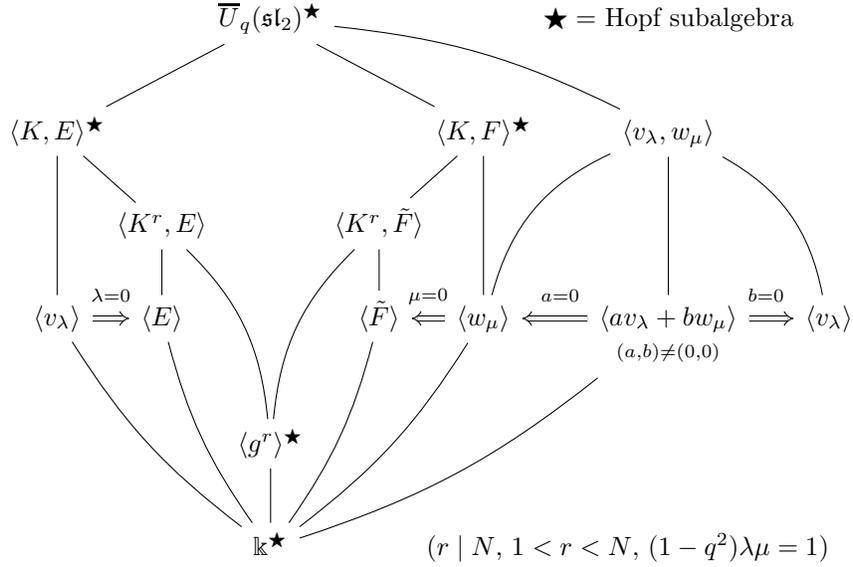
\begin{figure}
  \begin{equation*}
    \begin{tikzcd}[column sep = -4pt, row sep = 16pt]
      & & \overline{U}_q(\mathfrak{sl}_2) {}^{\bigstar}
      \arrow[lld, -] \arrow[rrd, -] \arrow[rrrd, -, bend left = 10]
      & & & \text{$\bigstar = $ Hopf subalgebra} & \\[8pt]
      %%%%%%%%%%%%%%%%%%%%%%%%%%%%%%%%%%%%%%%% 
      \langle K, E \rangle {}^{\bigstar} \arrow[dd, -] \arrow[rd, -] & & & &
      \langle K, F \rangle {}^{\bigstar} \arrow[dd, -] \arrow[ld, -]
      & \langle v_{\lambda}, w_{\mu} \rangle
      \arrow[ddl, -, bend right = 25] \arrow[ddr, -, bend left = 25]
      \arrow[dd, -]
      & \\
      %%%%%%%%%%%%%%%%%%%%%%%%%%%%%%%%%%%%%%%%
      &   \langle K^r, E \rangle \arrow[d, -] \arrow[ddr, -, bend left = 20]
      & & \langle K^r, \tilde{F} \rangle \arrow[d, -] \arrow[ddl, -, bend right = 20]
      & & & \\
      %%%%%%%%%%%%%%%%%%%%%%%%%%%%%%%%%%%%%%%% 
      \langle v_{\lambda} \rangle \arrow[ddrr, -, bend right = 10]
      \arrow[r, Rightarrow, "{\lambda = 0}" {yshift = 4pt}]
      & \langle E \rangle \arrow[ddr, -, bend right = 10]
      & & \langle \tilde{F} \rangle \arrow[ddl, -, bend left = 10]
      & \langle w_{\mu} \rangle \arrow[ddll, -, bend left = 10]
      \arrow[l, Rightarrow, "{\mu = 0}"' {yshift = 3pt}]
      & \begin{gathered}[t]
        \langle a v_{\lambda} + b w_{\mu} \rangle \\[-3pt]
        \scriptstyle (a, b) \ne (0,0) \end{gathered}
      \arrow[l, Rightarrow, "{a = 0}"' {yshift = 4pt, xshift = 2pt}]
      \arrow[r, Rightarrow, "{b = 0}" {yshift = 4pt, xshift = -2pt}]
      \arrow[ddlll, -, bend left = 10]
      & \langle v_{\lambda} \rangle \\[0pt]
      %%%%%%%%%%%%%%%%%%%%%%%%%%%%%%%%%%%%%%%%
      & & \langle g^r \rangle {}^{\bigstar} \arrow[d, -]
      & & & & \\[5pt]
      & & \bfk {}^{\bigstar} & & & & \makebox[0pt][r]{($r \mid N$, $1 < r < N$, $(1-q^2) \lambda \mu = 1$)}
    \end{tikzcd}
  \end{equation*}
  \caption{Lattice of coideal subalgebras of $\overline{U}_q(\mathfrak{sl}_2)$}
  \label{fig:CSA-Uq-sl2}
\end{figure}

\begin{remark}
  For a parameter $q \in \bfk^{\times}$ not being a root of unity, Vocke \cite{2018arXiv180410007V} gave a list of coideal subalgebras $A$ of $U_q(\mathfrak{sl}_2)$ such that $A \cap \Grp(U_q(\mathfrak{sl}_2))$ is a group.
  Although the setting we are considering in this paper is different from \cite{2018arXiv180410007V}, the conclusion is that the list of coideal subalgebras of $\overline{U}_q(\mathfrak{sl}_2)$ is basically same as Vocke's list.
\end{remark}

\begin{remark}
  Let $\mathcal{C}_m$ denote the set of all coideal subalgebras of $\overline{U}_q(\mathfrak{sl}_2)$ of dimension $m$.
  The sets $\mathcal{C}_N$ and $\mathcal{C}_{N^2}$ are parameterized neatly by using some geometric objects.
  Indeed, $\mathcal{C}_N$ can be identified with the projective plane by the map
  \begin{equation*}
    \mathbb{P}_2 \to \mathcal{C}_N,
    \quad [a : b : c] \mapsto \langle a E + b \tilde{F} + c K \rangle.
  \end{equation*}
  The set $\mathcal{C}_{N^2}$ can be identified with the projective line by the map
  \begin{equation*}
    \mathbb{P}_1 \to \mathcal{C}_{N^2},
    \quad [1 : 0] \mapsto \langle K, E \rangle,
    \quad [0 : 1] \mapsto \langle K, \tilde{F} \rangle,
    \quad [a : b] \mapsto \langle v_{\lambda}, w_{\mu} \rangle,
  \end{equation*}
  where $a, b \in \bfk^{\times}$, $\lambda = b/a$ and $\mu = (1-q^2)^{-1} a/b$.
\end{remark}

\begin{remark}
  Mombelli \cite[Section 8]{MR2678630} claimed that every exact comodule algebra over $\overline{U}_q := \overline{U}_q(\mathfrak{sl}_2)$ is obtained as a deformation of a coideal subalgebra, however, this is not true in general.
  In fact, when $N$ has a proper divisor $r$, Mombelli's list contains comodule algebras of dimension $r N^2$, while there are no such coideal subalgebras of $\overline{U}_q$ according to our classification result.
  This discrepancy originates from that Mombelli actually have classified exact comodule algebras over $\mathrm{gr}(\overline{U}_q)$, which are in bijection with those over $\overline{U}_q$.
  The Hopf algebra $\mathrm{gr}(\overline{U}_q)$ is identified with that considered in Example~\ref{ex:CSA-gr-Uq-sl2}.
  In view of the list of coideal subalgebras of $\mathrm{gr}(\overline{U}_q)$ given in Example~\ref{ex:CSA-gr-Uq-sl2}, we see that every exact comodule algebra over $\mathrm{gr}(\overline{U}_q)$ is indeed obtained as a deformation of a coideal subalgebra of $\mathrm{gr}(\overline{U}_q)$.
\end{remark}

\subsection{The coideal subalegbra \texorpdfstring{$\langle E + \lambda K, \tilde{F} + \mu K \rangle$}{<E+λK, \~F+βK>}}
\label{subsec:CSA-v-alpha-w-beta}

We fix elements $\lambda$ and $\mu$ of $\bfk$ satisfying $(1-q^2) \lambda \mu = 1$.
In this subsection, we discuss the structure of the coideal subalgebra $B_{\lambda,\mu} := \langle E + \lambda K, \tilde{F} + \mu K \rangle$.
For simplicity of notation, we use renormalized generators $v = \lambda^{-1} (E + \lambda K)$ and $w = \mu^{-1} (\tilde{F} + \mu K)$.
The main result in this section is that $B_{\lambda,\mu}$ is isomorphic to the algebra $T_{N,N}(q^2)$ of Definition~\ref{def:Taft-like-algebra}. More precisely, we prove:

\begin{theorem}
  \label{thm:CSA-v-lambda-w-mu-structure}
  There is the following isomorphism of algebras:
  \begin{equation*}
    T_{N,N}(q^2) \to B_{\lambda,\mu},
    \quad \mathtt{x} \mapsto 1 - w v,
    \quad \mathtt{g} \mapsto w.
  \end{equation*}
\end{theorem}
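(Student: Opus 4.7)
Rather than checking the relations of $T_{N,N}(q^2)$ directly on $w$ and $\mathtt X := 1-wv$ — where the essential difficulty is proving $\mathtt X^N = 0$ — the plan is to first construct the would-be inverse $\phi : B_{\lambda,\mu} \to T_{N,N}(q^2)$ and then invert it by a dimension count. To that end, I would first establish three relations in $B_{\lambda,\mu}$: $v^N = 1$, $w^N = 1$, and $wv - q^2 vw = 1 - q^2$. The $N$-th power identities follow from the $q$-binomial formula recalled in Subsection~\ref{subsec:examples-of-thm}: since $K \cdot (\lambda^{-1}E) = q^2 (\lambda^{-1}E) \cdot K$ and $q^2$ is a primitive $N$-th root of unity (odd $N$), all middle binomial coefficients vanish and $v^N = K^N + \lambda^{-N}E^N = 1$; similarly for $w$. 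The commutation relation is a direct computation using $\tilde F E = q^2 E \tilde F - K^2 + 1$, $\tilde F K = q^2 K\tilde F$, $KE = q^2 EK$, together with the normalization $(1-q^2)\lambda\mu = 1$, so that $\lambda^{-1}\mu^{-1} = 1-q^2$.

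Next I would show $\dim B_{\lambda,\mu} = N^2$ and that $B_{\lambda,\mu}$ is presented by these three relations. The commutation relation allows every monomial in $v, w$ to be rewritten in normal form $v^a w^b$, so $\{v^a w^b : a, b \in [0,N)\}$ spans $B_{\lambda,\mu}$ and bounds its dimension by $N^2$. For the lower bound and presentation, I would observe that in the PBW basis $\{K^i E^j \tilde F^k\}$ of $\overline U_q(\mathfrak{sl}_2)$ each $v^a w^b$ has a unique, non-zero top-degree contribution $\lambda^{-a}\mu^{-b}q^{?}\,E^a \tilde F^b$ in the $(E+\tilde F)$-grading; since these are distinct for $(a,b) \in [0,N)^2$, the spanning set is linearly independent.

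Using this presentation, define $\phi : B_{\lambda,\mu} \to T_{N,N}(q^2)$ by $w \mapsto \mathtt g$ and $v \mapsto \mathtt g^{N-1}(1-\mathtt x)$. Well-definedness amounts to verifying the three relations on the images. The commutation relation is routine: from $(1-\mathtt x)\mathtt g = \mathtt g(1-q^{-2}\mathtt x)$ one computes $\phi(w)\phi(v) = 1-\mathtt x$ and $\phi(v)\phi(w) = 1-q^{-2}\mathtt x$, yielding $\phi(wv) - q^2\phi(vw) = 1-q^2$. The key identity $\phi(v)^N = 1$ is the main obstacle: by induction, using $(1-q^{2k}\mathtt x)\mathtt g^{N-1} = \mathtt g^{N-1}(1-q^{2(k+1)}\mathtt x)$, one proves
\[(\mathtt g^{N-1}(1-\mathtt x))^n = \mathtt g^{n(N-1)}\prod_{k=0}^{n-1}(1-q^{2k}\mathtt x),\]
and then at $n = N$ both factors collapse: $\mathtt g^{N(N-1)} = 1$, while $\prod_{k=0}^{N-1}(1-q^{2k}\mathtt x) = 1 - \mathtt x^N = 1$ because $\{q^{2k}\}_{k \in [0,N)}$ runs over all $N$-th roots of unity and $\mathtt x^N = 0$ in $T_{N,N}(q^2)$.

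Finally, $\phi$ is surjective since $\phi(w) = \mathtt g$ and $\phi(1-wv) = \mathtt x$ generate $T_{N,N}(q^2)$. As $\dim B_{\lambda,\mu} = N^2 = \dim T_{N,N}(q^2)$, $\phi$ is an isomorphism, and its inverse $\psi = \phi^{-1}$ sends $\mathtt g = \phi(w) \mapsto w$ and $\mathtt x = \phi(1-wv) \mapsto 1-wv$, which is the claim. The unique substantive step is $\phi(v)^N = 1$: this is the only place where the truncation $\mathtt x^N = 0$ in $T_{N,N}(q^2)$ is used essentially, and it circumvents the direct verification of $(1-wv)^N = 0$ in $B_{\lambda,\mu}$.
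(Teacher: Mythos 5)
Your proposal is correct, but it runs in the opposite direction from the paper's argument, and the key computation is genuinely different. Both proofs share the same foundation (the paper's Lemma~\ref{lem:CSA-v-w-relations}): the relations $v^N=w^N=1$, $wv-q^2vw=1-q^2$, the basis $\{v^iw^j\}$, and hence $\dim B_{\lambda,\mu}=N^2$ with these relations as a presentation. From there the paper builds the map $T_{N,N}(q^2)\to B_{\lambda,\mu}$ directly, so it must prove $(1-wv)^N=0$ \emph{inside} $B_{\lambda,\mu}$; it does this via the elements $e_k=\sum_i q^{-2ik}v^i$, $\tilde e_\ell=\sum_i q^{-2i\ell}w^i$ and the identity $e_k\tilde e_\ell(1-wv)=(1-q^{2k+2\ell})e_k\tilde e_{\ell-1}$ (Lemma~\ref{lem:CSA-v-w-integral}), which kills $x^N$ against a basis of $B_{\lambda,\mu}$. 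You instead use the presentation to map \emph{out of} $B_{\lambda,\mu}$, sending $v\mapsto\mathtt g^{N-1}(1-\mathtt x)$, $w\mapsto\mathtt g$, and the only nontrivial relation to check, $\phi(v)^N=1$, follows from the clean identity $\bigl(\mathtt g^{N-1}(1-\mathtt x)\bigr)^N=\prod_{k=0}^{N-1}(1-q^{2k}\mathtt x)=1-\mathtt x^N=1$ in $T_{N,N}(q^2)$; I checked the commutation $\mathtt x\mathtt g^{N-1}=q^2\mathtt g^{N-1}\mathtt x$ and the induction, and they are right. Your route is arguably more elementary, since the hard identity is verified in the explicitly presented algebra $T_{N,N}(q^2)$ rather than in $B_{\lambda,\mu}$, and it cleverly trades ``$\mathtt x^N=0$'' for ``$1-\mathtt x^N=1$''. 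What it does not buy you is the by-product the paper extracts from its computation: Lemma~\ref{lem:CSA-v-w-integral} exhibits $\Lambda=e_0\tilde e_0$ as an explicit right integral of $B_{\lambda,\mu}$ (Remark~\ref{rem:CSA-v-w-integral}), which is reused in Theorem~\ref{thm:CSA-Oq-SL2-generators-3}; with your approach one would still need a separate argument to produce that integral.
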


As the first step of the proof, we give a basis and determine defining relations of the algebra $B_{\lambda,\mu}$. It is straightforward to verify that $v$ and $w$ satisfy
\begin{equation}
  \label{eq:CSA-v-w-relations}
  v^N = w^N = 1 \quad \text{and} \quad w v - q^2 v w = 1 - q^2.
\end{equation}

\begin{lemma}
  \label{lem:CSA-v-w-relations}
  The coideal subalgebra $B_{\lambda,\mu}$ has the set
  \begin{equation}
    \label{eq:CSA-v-w-basis}
    \{ v^i w^j \mid i, j \in [0, N) \}
  \end{equation}
  as a basis and has \eqref{eq:CSA-v-w-relations} as defining relations.
\end{lemma}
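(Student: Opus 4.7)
The plan breaks into three steps: spanning, linear independence, and deducing the presentation from a dimension count. The relations \eqref{eq:CSA-v-w-relations} have already been verified, so the content of the lemma is the completeness of these relations and the independence of \eqref{eq:CSA-v-w-basis}. For spanning, the commutation $wv = q^2 vw + (1-q^2)$ lets one move a $w$ past a preceding $v$ at the cost of a term of length two smaller; a routine induction on word length and number of inversions then rewrites any monomial in $v$ and $w$ as a linear combination of normally ordered monomials $v^i w^j$, after which $v^N = w^N = 1$ reduces the exponents modulo $N$. So \eqref{eq:CSA-v-w-basis} spans $B_{\lambda,\mu}$.

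The substantive step is linear independence. My plan is to exploit the filtration $F^\bullet$ on $\overline{U}_q(\mathfrak{sl}_2)$ defined by $\deg(K) = 0$ and $\deg(E) = \deg(F) = 1$, whose associated graded is the Hopf algebra $H$ of Example~\ref{ex:CSA-gr-Uq-sl2}. Since $\tilde F = (q-q^{-1})KF$ has degree $1$, both $v$ and $w$ lie in $F^1$, and their images in $F^1/F^0$ are the nonzero elements $\lambda^{-1} x_1$ and $\mu^{-1} x_2$ of $H$. Given a hypothetical nonzero relation $\sum_{i,j \in [0,N)} c_{ij} v^i w^j = 0$ in $\overline{U}_q(\mathfrak{sl}_2)$, let $d$ be the maximal value of $i+j$ with some $c_{ij} \ne 0$; passing to $\mathrm{gr}^d = F^d/F^{d-1}$ yields
\[
  \sum_{i+j=d} c_{ij} \lambda^{-i} \mu^{-j} x_1^i x_2^j = 0
\]
in $H$. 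But $\{ x_1^i x_2^j : i,j \in [0,N) \}$ is part of the basis \eqref{eq:uD-basis} of $H$ and is therefore linearly independent, so all $c_{ij}$ with $i+j=d$ must vanish, contradicting the choice of $d$.

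Finally, if $T$ denotes the abstract algebra on generators $v, w$ subject to \eqref{eq:CSA-v-w-relations}, then applying the spanning argument inside $T$ gives $\dim T \le N^2$, while the canonical surjection $T \twoheadrightarrow B_{\lambda,\mu}$ maps onto a subspace of dimension $N^2$ by the linear independence just established, hence is an isomorphism; this simultaneously yields the basis and the presentation. I expect the linear independence step to be the only real obstacle: the spanning is a standard diamond-lemma style computation and the presentation is formal once the basis is in hand, but the independence requires the transfer to the associated graded to sidestep the otherwise awkward direct computation inside $\overline{U}_q(\mathfrak{sl}_2)$.
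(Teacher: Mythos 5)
Your proposal is correct and follows essentially the same route as the paper: the paper also establishes linear independence by comparing top-degree terms of $v^i w^j$ against the PBW basis \eqref{eq:uD-basis} (phrased there as ``the leading monomial of $v^i w^j$ is $x_1^i x_2^j$'' with respect to the graded lexicographic order of Section~\ref{sec:lifting}, which is just your associated-graded argument in different clothing), and then obtains the presentation by the identical surjection-plus-dimension-count. No gaps.
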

\begin{proof}
  Since the leading monomial of $v^i w^j$ is $x_1^i x_2^j$, we see that the set \eqref{eq:CSA-v-w-basis} is linearly independent.
  By \eqref{eq:CSA-v-w-relations}, the set \eqref{eq:CSA-v-w-basis} also spans $B_{\lambda,\mu}$. Hence \eqref{eq:CSA-v-w-basis} is a basis of $B_{\lambda,\mu}$.
  
  To show that \eqref{eq:CSA-v-w-relations} is defining relations of $B_{\lambda,\mu}$, we define $\tilde{B}_{\lambda,\mu}$ to be the algebra generated by indeterminates $\tilde{v}$ and $\tilde{w}$ subject to the relations $\tilde{v}^N = 1$, $\tilde{w}^N = 1$ and $\tilde{w} \tilde{v} - q^2 \tilde{v} \tilde{w} = 1 - q^2$. There is a surjective algebra map $\phi : \tilde{B}_{\lambda,\mu} \to B_{\lambda,\mu}$ sending $\tilde{v}$ and $\tilde{w}$ to $v$ and $w$, respectively. Since $\tilde{B}_{\lambda,\mu}$ is spanned by the set $\{ \tilde{v}^i \tilde{w}^j \mid i, j \in [0, N) \}$, we have $\dim(\tilde{B}_{\lambda,\mu}) \le N^2 = \dim(B_{\lambda,\mu})$. Hence, by linear algebra, the algebra map $\phi$ is in fact an isomorphism. The proof is done.
\end{proof}

For $k \in \mathbb{Z}$, we set $e_k = \sum_{i = 0}^{N-1} q^{-2ik} v^i$ and $\tilde{e}_k = \sum_{i = 0}^{N-1} q^{-2ik} w^i$.

\begin{lemma}
  \label{lem:CSA-v-w-integral}
  For $k, \ell \in \mathbb{Z}$, we have
  \begin{equation}
    e_k \tilde{e}_{\ell} (1 - w v) = (1 - q^{2k+2\ell}) e_k \tilde{e}_{\ell - 1}.
  \end{equation}
\end{lemma}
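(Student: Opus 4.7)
The approach is a direct calculation carried out in the basis \eqref{eq:CSA-v-w-basis} supplied by Lemma~\ref{lem:CSA-v-w-relations}.

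First, I would record two preparatory identities. A shift of the summation index modulo $N$, using $v^N = w^N = 1$ and $q^N = 1$, yields the ``eigenvalue'' relations
\begin{equation*}
  v e_k = e_k v = q^{2k} e_k, \qquad
  w \tilde e_\ell = \tilde e_\ell w = q^{2\ell} \tilde e_\ell.
\end{equation*}
An easy induction on $m \ge 0$ from the base case $wv = q^2 vw + (1 - q^2)$ of \eqref{eq:CSA-v-w-relations} gives the iterated commutation rule
\begin{equation*}
  w^m v = q^{2m} v w^m + (1 - q^{2m}) w^{m-1}.
\end{equation*}

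Next, I would expand $e_k \tilde e_\ell = \sum_{i, j = 0}^{N-1} q^{-2(ik + j\ell)} v^i w^j$ and right-multiply by $wv$. Applying the iterated commutation rule with $m = j + 1$ to each summand $v^i w^{j+1} v$ produces two double sums. In the first, the substitution $(i, j) \mapsto (i-1, j-1)$---legitimate because each summand is $N$-periodic in both indices---collapses it into a scalar multiple of $e_k \tilde e_{\ell - 1}$. The second double sum separates as $e_k \tilde e_\ell$ minus a scalar multiple of $e_k \tilde e_{\ell - 1}$. Subtracting the result from $e_k \tilde e_\ell$ cancels the $e_k \tilde e_\ell$ contribution, leaving a scalar multiple of $e_k \tilde e_{\ell - 1}$, which is the shape of the claim.

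The main obstacle is purely the bookkeeping of $q$-powers through the index substitution; a tempting wrong turn is to misplace a factor of $q^{\pm 2}$. As a self-check, I would compute $\tilde e_\ell v$ independently via the one-variable identity $\tilde e_\ell v = v \tilde e_{\ell - 1} + q^{-2\ell} \tilde e_\ell - q^{-2(\ell-1)} \tilde e_{\ell - 1}$ (obtained by applying the iterated commutation rule directly to the sum defining $\tilde e_\ell$), then left-multiply by $e_k$ and invoke $e_k v = q^{2k} e_k$; this route circumvents the two-variable sum and makes the coefficient of $e_k \tilde e_{\ell-1}$ transparent.
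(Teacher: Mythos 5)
Your proposed route is essentially the paper's own: the ``self-check'' identity you give for $\tilde{e}_\ell v$ is exactly the intermediate step in the paper's proof, and your preparatory facts ($e_k v = v e_k = q^{2k} e_k$, $\tilde{e}_\ell w = q^{2\ell}\tilde{e}_\ell$, and the iterated rule $w^m v = q^{2m} v w^m + (1-q^{2m})w^{m-1}$) are all correct. The gap is that you stop at ``a scalar multiple of $e_k\tilde{e}_{\ell-1}$, which is the shape of the claim.'' Once the eigenvalue relations are known, the shape is nearly automatic; the entire content of the lemma is the value of the scalar, so the proof is not complete until you have pinned it down.

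And here is the real issue: if you push your own (correct) identity $\tilde{e}_\ell v = v\tilde{e}_{\ell-1} + q^{-2\ell}\tilde{e}_\ell - q^{-2(\ell-1)}\tilde{e}_{\ell-1}$ to the end, you obtain $e_k\tilde{e}_\ell(1-wv) = e_k\tilde{e}_\ell - q^{2\ell} e_k\tilde{e}_\ell v = (q^2 - q^{2k+2\ell})\,e_k\tilde{e}_{\ell-1}$, which is \emph{not} the stated coefficient $1-q^{2k+2\ell}$; the two-variable expansion gives the same answer. A counit check decides the matter: since $\varepsilon(v)=\varepsilon(w)=1$, the left-hand side has counit zero for all $k,\ell$, while for $(k,\ell)=(0,1)$ the stated right-hand side has counit $(1-q^2)N^2\neq 0$. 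The discrepancy comes from a reindexing slip in the paper's displayed computation, where $\sum_i q^{-2i(\ell-1)}w^{i-1}$ is factored as $q^{-2\ell}\tilde{e}_{\ell-1}$ rather than $q^{-2(\ell-1)}\tilde{e}_{\ell-1}$. So you should finish the computation and record the coefficient as $q^2\bigl(1-q^{2(k+\ell-1)}\bigr)$. The downstream use in Theorem~\ref{thm:CSA-v-lambda-w-mu-structure} survives, because the product $\prod_{j=1}^{N}\bigl(1-q^{2(k+\ell-j)}\bigr)$ over $N$ consecutive values of $j$ still contains a vanishing factor, so $x^N=0$ holds; but the right integral in Remark~\ref{rem:CSA-v-w-integral} should then be $e_1\tilde{e}_0$ rather than $e_0\tilde{e}_0$.
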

\begin{proof}
  For a non-negative integer $i$, one can prove
  \begin{equation*}
    w^i v = (1 - q^{2i}) w^{i-1} + q^{2i} v w^i
  \end{equation*}
  by induction on $i$. Hence,
  \begin{align*}
    \tilde{e}_{\ell} v
    & = \sum_{i = 0}^{N-1} q^{-2i\ell} w^{i-1}
      - \sum_{i = 0}^{N-1} q^{-2i\ell + 2i} w^{i-1}
      + \sum_{i = 0}^{N-1} q^{-2i \ell + 2i} v w^{i} \\
    & = q^{-2\ell} \sum_{i = 0}^{N-1} q^{-2i \ell} w^{i}
      - q^{-2\ell} \sum_{i = 0}^{N-1} q^{-2i (\ell-1)} w^{i}
      + \sum_{i = 0}^{N-1} q^{-2i (\ell - 1)} v w^{i} \\
    & = q^{-2\ell} \tilde{e}_{\ell} - q^{-2\ell} \tilde{e}_{\ell-1} + v \tilde{e}_{\ell-1}.
  \end{align*}
  Since $e_k v = q^{2k} e_k$ and $\tilde{e}_{\ell} w = q^{2\ell} \tilde{e}_{\ell}$, we have
  \begin{align*}
    e_k \tilde{e}_{\ell} (1 - w v)
    & = e_k \tilde{e}_{\ell} - q^{2\ell} e_k (q^{-2\ell} \tilde{e}_{\ell} - q^{-2\ell} \tilde{e}_{\ell-1} + q^{2k} \tilde{e}_{\ell-1}) \\
    & = (1 - q^{2k+2\ell}) e_k \tilde{e}_{\ell-1}.
      \qedhere
  \end{align*}
\end{proof}

\begin{proof}[Proof of Theorem~\ref{thm:CSA-v-lambda-w-mu-structure}]
  Set $x = 1 - w v$. For all $k, \ell \in \mathbb{Z}$, we have
  \begin{equation*}
    e_k \tilde{e}_{\ell} x^N
    = \prod_{j = 0}^{N-1} (1 - q^{2k + 2\ell - 2j}) e_k \tilde{e}_{\ell-N}
    = 0
  \end{equation*}
  by the previous lemma.
  Since the set $\{ e_k \tilde{e}_{\ell} \mid k, \ell \in [0, N) \}$ is a basis of $B_{\lambda,\mu}$, the element $x^N$ is zero. We also have $w x = q^2 x w$.
  Hence there is a unique algebra map $\phi : T_{N,N}(q^2) \to B_{\lambda,\mu}$ such that $\phi(\mathtt{x}) = x$ and $\phi(\mathtt{g}) = w$. The generators $v$ and $w$ are contained in the image of $\phi$, and therefore $\phi$ is surjective. Since the source and the target of $\phi$ have the same dimension, the map $\phi$ is in fact an isomorphism. The proof is done.
\end{proof}

\begin{remark}
  \label{rem:CSA-v-w-integral}
  Lemma \ref{lem:CSA-v-w-integral} shows that the element $\Lambda = e_0 \tilde{e}_{0}$ satisfy $\Lambda x = 0 = \Lambda \varepsilon(x)$ and $\Lambda w = \Lambda = \Lambda \varepsilon(w)$. This means that $\Lambda$ is a right integral of $B_{\lambda,\mu}$.
\end{remark}

\subsection{The coideal subalegbra \texorpdfstring{$\langle E + \alpha \tilde{F} + \beta K \rangle$}{<E+α\~F+βK>}}
\label{subsec:CSA-u-alpha-beta}

In this subsection, we fix $\alpha, \beta \in \bfk$ and consider the coideal subalgebra $A_{\alpha, \beta} := \langle u_{\alpha, \beta} \rangle$, where $u_{\alpha,\beta} = E + \alpha \tilde{F} + \beta K$. Our result is summarized as follows:

\begin{theorem}
  \label{thm:CSA-u-alpha-beta-min-pol}
  The minimal polynomial of $u_{\alpha, \beta}$ is
  \begin{equation*}
    \phi_{\alpha, \beta}(X) = (X - \beta) \prod_{k = 1}^{(N-1)/2} \left( X^2 - \beta(q^{2k} + q^{-2k}) X + \beta^2 + \frac{(q^{2k}-q^{-2k})^2}{1 - q^2} \alpha \right),
  \end{equation*}
  and therefore $A_{\alpha, \beta} \cong \bfk[X]/(\phi_{\alpha,\beta}(X))$ as algebras.
  The algebra $A_{\alpha,\beta}$ is semisimple if and only if $D(\alpha, \beta) \ne 0$, where
  \begin{equation}
    D(\alpha, \beta) = \prod_{k = 0}^{N-1} D_k(\alpha, \beta),
    \quad D_k(\alpha, \beta) = \beta^2 - \frac{(q^{k} + q^{-k})^2}{1 - q^2} \alpha.
  \end{equation}
\end{theorem}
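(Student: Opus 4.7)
The plan is to identify $\phi_{\alpha,\beta}(X)$ with the characteristic polynomial of $u := u_{\alpha,\beta}$ acting on the $N$-dimensional simple $\overline{U}_q(\mathfrak{sl}_2)$-module $V_N$, and then to promote the Cayley--Hamilton identity on $V_N$ to the relation $\phi_{\alpha,\beta}(u) = 0$ in $\overline{U}_q(\mathfrak{sl}_2)$ itself via a Zariski density argument. Theorem~\ref{thm:CSA-Uq-sl-2} (see Table~\ref{tab:CSA-Uq-sl2}) gives $\dim A_{\alpha,\beta} = N$ for every $(\alpha,\beta) \in \bfk^2$; since $A_{\alpha,\beta} = \bfk[u]$, the minimal polynomial of $u$ is monic of degree exactly $N$, and any monic annihilator of degree $N$ must then be this minimal polynomial.

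To guess the eigenvalues, choose $\lambda, \mu \in \bfk$ with $\lambda + \mu = \beta$ and $(1-q^2)\lambda\mu = \alpha$ (possible since $\bfk$ is algebraically closed). A Vieta check gives
\[
  \phi_{\alpha,\beta}(X) = \prod_{k=0}^{N-1} \bigl(X - X_k\bigr), \qquad X_k := q^{2k}\lambda + q^{-2k}\mu,
\]
with $X_0 = \beta$ yielding the linear factor and each pair $\{X_k, X_{N-k}\}$ for $k \in [1, (N-1)/2]$ yielding a quadratic factor $\phi_k$ with the asserted coefficients. Now realize $V_N$ on the standard basis $\{v_i\}_{i=0}^{N-1}$ with $K v_i = q^{N-1-2i} v_i$, $F v_i = v_{i+1}$, and $E v_i = [i]_q [N-i]_q v_{i-1}$. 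Using $[N-i]_q = -[i]_q$ (valid because $q^N = 1$ and $N$ is odd), the action of $u$ becomes tridiagonal:
\[
  u v_i = -[i]_q^2\, v_{i-1} + \beta q^{N-1-2i}\, v_i + \alpha(q-q^{-1}) q^{N-3-2i}\, v_{i+1}.
\]
For each $k$, construct an eigenvector $\psi_k = \sum_j c_j^{(k)} v_j$ with eigenvalue $X_k$ by solving the resulting three-term recurrence via a $q$-hypergeometric ansatz in $\lambda$, $\mu$, and $q^{2k}$; the closing boundary equation at $j = N-1$ (a genuine constraint because $[N]_q = 0$) is automatically satisfied, via a $q$-binomial identity, under the defining relations $\lambda + \mu = \beta$ and $(1-q^2)\lambda\mu = \alpha$. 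Hence the characteristic polynomial of $u$ on $V_N$ is $\prod_k (X - X_k) = \phi_{\alpha,\beta}(X)$, and Cayley--Hamilton yields that $\phi_{\alpha,\beta}(u)$ acts as zero on $V_N$.

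To lift this to $\phi_{\alpha,\beta}(u) = 0$ in $\overline{U}_q(\mathfrak{sl}_2)$, observe that on the Zariski-open set $\{D(\alpha,\beta) \neq 0\}$ the values $X_0, \ldots, X_{N-1}$ are pairwise distinct, so $u$ has $N$ distinct eigenvalues on $V_N$; then $1, u, \ldots, u^{N-1}$ are linearly independent in $\mathrm{End}(V_N)$, so $A_{\alpha,\beta}$ embeds into $\mathrm{End}(V_N)$ and $\phi_{\alpha,\beta}(u) = 0$ in $\overline{U}_q(\mathfrak{sl}_2)$ on this dense set. Since $\phi_{\alpha,\beta}(u_{\alpha,\beta})$ depends polynomially on $(\alpha,\beta)$, the identity extends to all parameters, and combined with $\deg \phi_{\alpha,\beta} = N = \dim A_{\alpha,\beta}$ yields $A_{\alpha,\beta} \cong \bfk[X]/(\phi_{\alpha,\beta}(X))$. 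For the semisimplicity criterion, $\bfk[X]/(\phi)$ is semisimple iff $\phi$ has pairwise distinct roots; solving $X_i = X_j$ with $i \neq j$ in $\mathbb{Z}/N$ forces $\lambda/\mu = q^{-2(i+j)}$, which, via $\lambda + \mu = \beta$ and $\lambda\mu = \alpha/(1-q^2)$, translates to $D_s(\alpha,\beta) = 0$ with $s := i + j \bmod N$. As $s$ ranges over $[0, N-1]$, $\phi$ has a repeated root iff some $D_s(\alpha,\beta)$ vanishes, iff $D(\alpha,\beta) = 0$. The main technical obstacle is verifying the $q$-binomial identity that makes the boundary equation at $j = N-1$ collapse to the defining relations on $\lambda, \mu$; the density argument then elegantly sidesteps any delicate faithfulness analysis in the degenerate case where $D(\alpha,\beta) = 0$.
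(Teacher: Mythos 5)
Your overall strategy --- compute the spectrum of $u_{\alpha,\beta}$ on the $N$-dimensional module $V_N$, apply Cayley--Hamilton, and propagate the identity $\phi_{\alpha,\beta}(u_{\alpha,\beta})=0$ to all parameters by Zariski density --- is viable (I checked the characteristic-polynomial claim by hand for $N=3$ and it holds), and the final density step and the root-coincidence analysis for semisimplicity coincide with what the paper does. But there is a genuine gap at the heart of the argument: the assertion that each $X_k=q^{2k}\lambda+q^{-2k}\mu$ is an eigenvalue of the tridiagonal operator $u$ on $V_N$ is never proved. You describe a ``$q$-hypergeometric ansatz'' and claim the boundary equation at $j=N-1$ ``is automatically satisfied via a $q$-binomial identity,'' but neither the eigenvector coefficients nor the identity are exhibited; this is precisely the nontrivial computation the whole proof rests on, and as written it is an unverified claim. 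A secondary issue: you take $\dim A_{\alpha,\beta}=N$ from Table~\ref{tab:CSA-Uq-sl2}, but that entry is only justified by Lemma~\ref{lem:CSA-u-alpha-beta-dim}, which is itself part of the paper's proof of this theorem (a leading-term argument showing $\{u^k\}_{k\in[0,N)}$ spans $\bfk[u]$). Since your injectivity step $A_{\alpha,\beta}\hookrightarrow\mathrm{End}(V_N)$ needs exactly the upper bound $\dim A_{\alpha,\beta}\le N$, you should prove it rather than cite the table.

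For comparison, the paper avoids any explicit spectral computation: for $\alpha\ne 0$ it writes $u_{\alpha,\beta}=v_\lambda+\alpha w_\mu$ with $(1-q^2)\lambda\mu=1$, so that $u_{\alpha,\beta}$ lies in the Taft-like coideal subalgebra $B_{\lambda,\mu}$ whose presentation (Lemma~\ref{lem:CSA-v-w-relations}) immediately yields $N$ one-dimensional characters $\mathrm{c}_k$ with $\mathrm{c}_k(u_{\alpha,\beta})=\omega_k$; these force all $\omega_k$ to be roots of the minimal polynomial, and the degree count from Lemma~\ref{lem:CSA-u-alpha-beta-dim} finishes the generic case before the same density argument you use. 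Your route would buy a concrete diagonalization of $u_{\alpha,\beta}$ on $V_N$ (of independent interest, and close in spirit to the computations in \cite{MR1710737,MR1183481,BeW18} cited by the paper), but it trades the paper's essentially computation-free character argument for a three-term recurrence whose resolution you have only sketched. To complete your proof you must either write down and verify the eigenvectors, or replace that step by an evaluation of the tridiagonal determinant $\det(X-u|_{V_N})$.
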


For a generic parameter $q$, the characteristic polynomial of the action of the element $a(K^{-1}-1) + b E K^{-1} + c F \in U_q(\mathfrak{sl}_2)$ ($a, b, c \in \bfk$) on an irreducible representation of $U_q(\mathfrak{sl}_2)$ has been computed in \cite[Theorem 1]{MR1710737} and \cite[Proposition 4.2]{MR1183481}. A similar problem is also addressed in connection with the study of $\imath$-canonical bases in \cite{BeW18}.
Indeed, for $\alpha = q/(q-q^{-1})$ and $\beta = 0$, the polynomial $\phi_{\alpha,\beta}(X)$ coincides with the formula of the $\imath$-divided power derived in \cite{BeW18}.
The above theorem may be obtained by a similar technique as \cite{BeW18,MR1710737,MR1183481}, but we present a totally different method in the below.

We go back to the case where $q$ is a root of unity of order $N$.
By the Chinese remainder theorem, the algebra $\bfk[X]/(\phi_{\alpha,\beta}(X))$ is semisimple if and only if $\phi_{\alpha,\beta}(X)$ has no multiple roots.
As the first step of the proof of this theorem, we examine when $\phi_{\alpha,\beta}(X)$ has multiple roots.
We fix $\xi \in \bfk$ such that
\begin{equation}
  \label{eq:CSA-u-def-xi}
  \xi^2 = \beta^2 - \frac{4 \alpha}{1-q^2}
\end{equation}
and then define
\begin{equation*}
  % \label{eq:CSA-u-def-omega-k}
  \omega_k = \frac{1}{2} \beta(q^{2k} + q^{-2k}) + \frac{1}{2} \xi (q^{2k}-q^{-2k})
\end{equation*}
for $k \in \mathbb{Z}$. Then we have
\begin{equation*}
  (X-\omega_k) (X-\omega_{-k})
  = X^2 - \beta(q^{2k} + q^{-2k}) X + \beta^2 + \frac{(q^{2k}-q^{-2k})^2}{1 - q^2} \alpha
\end{equation*}
for all $k \in \mathbb{Z}$.
Noting that $\omega_0 = \beta$ and $\omega_{a} = \omega_{b}$ if $a \equiv b \pmod{N}$, we find that the roots of the polynomial $\phi_{\alpha,\beta}(X)$ are $\omega_k$ for $k \in [0, N)$, including multiple roots.

\begin{lemma}
  \label{lem:CSA-u-alpha-beta-min-pol-multiple-roots}
  $\phi_{\alpha, \beta}(X)$ has multiple roots if and only if $D(\alpha, \beta) = 0$.
\end{lemma}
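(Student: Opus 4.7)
The plan is to parametrize the $N$ roots $\omega_k$ of $\phi_{\alpha,\beta}(X)$ by the substitution $u = (\beta+\xi)/2$, $v = (\beta-\xi)/2$, so that $\omega_k = u q^{2k} + v q^{-2k}$ and $u+v = \beta$, $uv = \alpha/(1-q^2)$. With this notation, two short expansions give the key factorizations that drive the whole argument:
\[
  \omega_k - \omega_\ell = (q^{2k} - q^{2\ell})\bigl(u - v q^{-2(k+\ell)}\bigr),
  \qquad
  D_m = (u q^m - v q^{-m})(u q^{-m} - v q^m).
\]
Since $N$ is odd, $k \mapsto q^{2k}$ is injective on $[0, N)$, so the first identity shows that $\omega_k = \omega_\ell$ with $k \not\equiv \ell \pmod N$ is equivalent to the solvability of $u q^{2m} = v$ for some $m \in [0, N)$. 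The second identity shows that $D_m = 0$ iff $u q^{2m} = v$ or $v q^{2m} = u$, and as the second alternative is transformed into the first by $m \mapsto N - m$, the vanishing of some $D_m$ is equivalent to the same solvability condition.

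Next I would compute the discriminant of the $k$-th quadratic factor of $\phi_{\alpha,\beta}(X)$, which a routine simplification (using $(q^{2k}+q^{-2k})^2 - 4 = (q^{2k}-q^{-2k})^2$) shows equals $(q^{2k} - q^{-2k})^2 \xi^2$. For $k \in [1, (N-1)/2]$ the factor $(q^{2k}-q^{-2k})^2$ is nonzero, again by $N$ odd. Hence each quadratic factor is a perfect square exactly when $\xi = 0$, and otherwise has two distinct roots.

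Finally, I would split into two cases. If $\xi = 0$ then $D_0 = \xi^2 = 0$ so $D(\alpha,\beta) = 0$, and at the same time every quadratic factor of $\phi_{\alpha,\beta}$ is a perfect square, so $\phi_{\alpha,\beta}$ certainly has multiple roots. If $\xi \neq 0$, no individual factor of $\phi_{\alpha,\beta}$ has a repeated root, so multiple roots of $\phi_{\alpha,\beta}$ must arise from two distinct factors sharing a root, i.e.\ from $\omega_k = \omega_\ell$ for some $k \not\equiv \ell \pmod N$; by the equivalences established above, this is exactly the vanishing of some $D_m$, hence of $D(\alpha,\beta)$. The main subtlety to watch for is the degenerate subcase $u = 0$ (forcing $v = -\xi \neq 0$) when $\xi \neq 0$, where one must verify that $u q^{2m} = v$ has no solution and simultaneously $D_m = v^2 \neq 0$ for all $m$, so that both sides of the equivalence remain consistent.
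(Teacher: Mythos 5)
Your proposal is correct and is essentially the paper's argument in a cleaner parametrization: writing $\omega_k = uq^{2k}+vq^{-2k}$ and factoring $\omega_k-\omega_\ell$ and $D_m$ is the same mechanism as the paper's identities $2q^{2i+2j}(\omega_i-\omega_j)=(q^{2i}-q^{2\ell})(1+q^{2i+2j})(\xi-Q(i+j)\beta)$ and $(q^k+q^{-k})^2(1-Q(k)^2)=4$, since $uq^{2m}=v$ is equivalent to $\xi=Q(m)\beta$. Note that your first two paragraphs already close the equivalence (multiple roots $\iff$ $uq^{2m}=v$ solvable $\iff$ some $D_m=0$), so the discriminant computation and the $\xi=0$ versus $\xi\neq 0$ case split are correct but superfluous.
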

\begin{proof}
  We set $Q(a) = (1-q^{2a}) / (1 + q^{2a})$ for $a \in \mathbb{Z}$ (the denominator is non-zero since $q$ is a root of unity of odd order). Then we have
  \begin{gather}
    \label{eq:CSA-u-min-pol-multiple-roots-proof-1}
    2 q^{2i+2j} (\omega_i - \omega_j)
    = (q^{2i} - q^{2j}) (1 + q^{2i + 2j}) (\xi - Q(i + j) \beta), \\
    \label{eq:CSA-u-min-pol-multiple-roots-proof-2}
    (q^{k} + q^{-k})^2 (1 - Q(k)^2) = 4
    \quad (i, j, k \in \mathbb{Z}).
  \end{gather}
  Now we suppose that $\phi_{\alpha,\beta}(X)$ has multiple roots.
  Then there are two integers $i$ and $j$ such that $\omega_i = \omega_j$ and $i \not \equiv j \pmod{N}$. By \eqref{eq:CSA-u-min-pol-multiple-roots-proof-1}, we have $\xi = Q(i + j) \beta$.
  Letting $k \in [0, N)$ be an integer such that $k \equiv i + j \pmod{N}$, we have
  \begin{equation*}
    \frac{\alpha}{1 - q^2}
    \mathop{=}^{\eqref{eq:CSA-u-def-xi}}
    \frac{\beta^2 - \xi^2}{4}
    = \frac{\beta^2 - Q(k)^2 \beta^2}{4}
    \mathop{=}^{\eqref{eq:CSA-u-min-pol-multiple-roots-proof-2}}
    \frac{\beta^2}{(q^{k} + q^{-k})^2}
  \end{equation*}
  and therefore $D(\alpha, \beta) = 0$.

  To prove the converse, we suppose that $D(\alpha, \beta) = 0$.
  Then $D_k(\alpha, \beta) = 0$ for some $k \in [0, N)$.
  For this $k$, we have
  \begin{equation*}
    \xi^2 - Q(k)^2 \beta^2
    \mathop{=}^{\eqref{eq:CSA-u-def-xi}}
    (1 - Q(k)^2) \beta^2 - \frac{4\alpha}{1 - q^2}
    \mathop{=}^{\eqref{eq:CSA-u-min-pol-multiple-roots-proof-2}}
    \frac{4 D_k(\alpha, \beta)}{(q^{k} + q^{-k})^2} = 0
  \end{equation*}
  and therefore either $\xi = Q(k) \beta$ or $\xi = - Q(k) \beta$ holds.
  If the latter is the case, then we have $\xi = -Q(k) \beta = Q(N-k) \beta$.
  Hence we may assume $\xi = Q(k) \beta$ for some $k$.
  Then, by \eqref{eq:CSA-u-min-pol-multiple-roots-proof-1}, we have $\omega_i = \omega_j$ for all $i, j \in \mathbb{Z}$ with $i + j \equiv k \pmod{N}$. Hence $\phi_{\alpha, \beta}(X)$ has multiple roots. The proof is done.
\end{proof}

The next step is to determine the dimension of $A_{\alpha, \beta}$.

\begin{lemma}
  \label{lem:CSA-u-alpha-beta-dim}
  $\dim A_{\alpha, \beta} = N^2$.
\end{lemma}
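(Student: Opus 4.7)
The plan is to compute $\dim A_{\alpha,\beta}$ by producing enough linearly independent powers of $u := u_{\alpha,\beta}$ and combining this with the annihilator identity $\phi_{\alpha,\beta}(u) = 0$ that the proof of Theorem~\ref{thm:CSA-u-alpha-beta-min-pol} establishes. Since $A_{\alpha,\beta}$ is commutative and singly generated, its dimension equals the degree of the minimal polynomial of $u$. The polynomial $\phi_{\alpha,\beta}$ has degree $1 + 2 \cdot (N-1)/2 = N$, so $\phi_{\alpha,\beta}(u) = 0$ forces $\dim A_{\alpha,\beta} \le N$; thus the minimal polynomial (and hence the dimension) is at most $N$. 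The task then reduces to a matching lower bound via a leading-term argument. (The stated value $N^2$ is incompatible with $\deg \phi_{\alpha,\beta} = N$ and must be a misprint for $N$; the argument below delivers exactly $N$.)

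The key tool is the filtration on $\overline{U}_q(\mathfrak{sl}_2)$ defined by $\deg E = \deg F = 1$ and $\deg K = 0$, whose associated graded Hopf algebra is the one considered in Example~\ref{ex:CSA-gr-Uq-sl2}; in it the images $x_1, x_2$ of $E, \tilde{F}$ satisfy $x_2 x_1 = q^2 x_1 x_2$. The element $u = E + \alpha \tilde{F} + \beta K$ lies in filtered degree at most $1$, and its principal symbol in degree $1$ is $x_1 + \alpha x_2$, since $\beta K$ contributes only in filtered degree $0$. Consequently, the symbol of $u^n$ in $\mathrm{gr}_n$ equals $(x_1 + \alpha x_2)^n$ computed in the associated graded.

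Applying the $q$-binomial theorem inside $\mathrm{gr}\,\overline{U}_q(\mathfrak{sl}_2)$ yields
\[
(x_1 + \alpha x_2)^n = \sum_{i=0}^{n} \alpha^{n-i} \binom{n}{i}_{\!q^2} x_1^i x_2^{n-i}.
\]
Because $N$ is odd, $q^2$ also has order $N$, so every coefficient $\binom{n}{i}_{q^2}$ with $0 \le i \le n < N$ is nonzero. The monomials $x_1^i x_2^{n-i}$ are distinct PBW basis vectors by~\eqref{eq:uD-basis}, hence linearly independent in $\mathrm{gr}_n$, and so $(x_1 + \alpha x_2)^n \neq 0$ for every $0 \le n < N$. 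This forces $u^n$ to lie in filtration degree exactly $n$, so the set $\{1, u, u^2, \dotsc, u^{N-1}\}$ is linearly independent in $\overline{U}_q(\mathfrak{sl}_2)$. Combined with $\phi_{\alpha,\beta}(u) = 0$, this pins the dimension down.

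The crux — and the place where the odd-order hypothesis on $q$ enters essentially — is the non-vanishing of the $q^2$-binomial coefficients in the expansion above. The only potentially delicate case is $\alpha = 0$, but here the principal symbol reduces to $x_1$ and the conclusion $x_1^n \neq 0$ in $\mathrm{gr}_n$ for $0 \le n < N$ is immediate from~\eqref{eq:uD-basis}; so the argument carries through verbatim.
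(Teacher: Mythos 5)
You are right that the stated value $N^2$ is a misprint: the paper's own proof concludes that $\{u^k \mid k \in [0,N)\}$ is a basis of $A_{\alpha,\beta}$, Table~\ref{tab:CSA-Uq-sl2} lists the dimension as $N$, and the proof of Theorem~\ref{thm:CSA-u-alpha-beta-min-pol} uses the lemma precisely in the form ``the minimal polynomial of $u$ has degree $N$''. Your lower bound is also fine: passing to the associated graded algebra and expanding $(x_1+\alpha x_2)^n$ by the $q^2$-binomial formula to see that $u^n$ has filtration degree exactly $n$ for $n<N$ is a clean variant of the paper's leading-term argument, and the non-vanishing of $\binom{n}{i}_{q^2}$ for $n<N$ is indeed where the odd order of $q$ enters.

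The gap is in the upper bound. You derive $\dim A_{\alpha,\beta}\le N$ from the identity $\phi_{\alpha,\beta}(u)=0$, which you attribute to the proof of Theorem~\ref{thm:CSA-u-alpha-beta-min-pol}; but that proof begins by invoking Lemma~\ref{lem:CSA-u-alpha-beta-dim} to assert that the minimal polynomial $m(X)$ has degree $N$, and only then concludes $m=\phi_{\alpha,\beta}$ (first in the separable case, then by a density argument). So your argument is circular: without the lemma one only knows that the $\omega_k$ are among the roots of $m(X)$, which bounds $\deg m$ from below, not from above. What is missing is an independent proof that $A_{\alpha,\beta}=\Span\{1,u,\dots,u^{N-1}\}$. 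The paper supplies this by showing that the leading term of every nonzero element of $A_{\alpha,\beta}$ is $c x_1^k$ with $c\in\bfk$ --- if some leading monomial involved $x_2$, Lemma~\ref{lem:coideal-sub-generators} would produce an element $x_2+\gamma g$ in $A_{\alpha,\beta}$, contradicting the commutativity of $A_{\alpha,\beta}$ --- and then subtracting $c u^k$ and inducting on the leading monomial. Your graded-symbol computation does not substitute for this step: knowing that $u^N$ has filtration degree $<N$ does not place $u^N$ in the span of $1,u,\dots,u^{N-1}$.
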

\begin{proof}
  The leading term of every element of $A_{\alpha, \beta}$ is of the form $c x_1^k$ for some $c \in \bfk$ and $k \in [0, N)$. Indeed, let $y \in A_{\alpha, \beta}$ be a non-zero element with leading coefficient $c \in \bfk \Gamma$ and leading monomial $x_1^s x_2^t$ ($s, t \in [0, N)$). Since $\bfk \Gamma \cap A = \bfk 1$, we have $c \in \bfk$ by Lemma~\ref{lem:coideal-sub-generators}.
  We suppose $t > 0$.
  Then, by Lemma~\ref{lem:coideal-sub-generators}, there is an element of $A_{\alpha, \beta}$ of the form $z = x_2 + \gamma g$ ($\gamma \in \bfk$). The element $z$ does not commute with $u_{\alpha, \beta}$, which contradicts to the commutativity of $A_{\alpha, \beta}$. Thus $t = 0$, that is, the leading term of $y$ is $c x_1^s$.

  Set $u = u_{\alpha, \beta}$ for simplicity.
  By taking their leading terms into account, we see that the elements $u^k$ ($k \in [0, N)$) are linearly independent. Now let $y \in A_{\alpha, \beta}$ be a non-zero element. By the above discussion, the leading term of $y$ is of the form $c x_1^k$ for some $c \in \bfk$ and $k \in [0, N)$. The element $y - c u^k \in A_{\alpha, \beta}$ has the degree strictly smaller than $y$. Thus, by induction, we can express every element of $A_{\alpha, \beta}$ as a polynomial of $u$ of degree $<N$. In conclusion, the set $\{ u^k \mid k \in [0, N) \}$ is a basis of $A_{\alpha, \beta}$. The proof is done.
\end{proof}

\begin{proof}[Proof of Theorem~\ref{thm:CSA-u-alpha-beta-min-pol}]
  The proof is easy if $\alpha = 0$.
  Thus, from now on, we assume $\alpha \ne 0$.
  Letting $\lambda = (\beta - \xi)/2$ and $\mu = (\beta + \xi) / (2\alpha)$, we have
  \begin{equation*}
    u_{\alpha, \beta} = v_{\lambda} + \alpha w_{\mu},
    \quad \lambda \mu = (1-q^2)^{-1},
  \end{equation*}
  where $v_{\lambda} = E + \lambda K$ and $w_{\mu} = \tilde{F} + \mu K$.
  Now let $m(X)$ be the minimal polynomial of $u_{\alpha,\beta}$.
  By Theorem \ref{thm:CSA-v-lambda-w-mu-structure} (or, rather, by Lemma~\ref{lem:CSA-v-w-relations} used to prove that theorem), there are algebra maps $\mathrm{c}_k : \langle v_{\lambda}, w_{\mu} \rangle \to \bfk$ ($k \in \mathbb{Z}$) such that $\mathrm{c}_k(v_{\lambda}) = q^{2k} \lambda$ and $\mathrm{c}_k(w_{\mu}) = q^{-2k} \mu$. Since $\mathrm{c}_k(u_{\alpha,\beta}) = q^{2k} \lambda + q^{-2k} \alpha \mu = \omega_k$, the roots $\omega_0, \cdots, \omega_{N-1}$ of $\phi_{\alpha,\beta}(X)$ are also roots of $m(X)$.
  By Lemma~\ref{lem:CSA-u-alpha-beta-dim}, the degree of $m(X)$ is $N$.
  Hence we have arrived at the following conclusion: If $\phi_{\alpha, \beta}(X)$ has no multiple roots, then $\phi_{\alpha, \beta}(X)$ is the minimal polynomial of $u_{\alpha, \beta}$.

  To complete the proof, we discuss the general case where $\phi_{\alpha, \beta}(X)$ may have multiple roots.
  In general, a finite-dimensional algebra can be embedded into a matrix algebra by using its faithful representation.
  We embed $\overline{U}_q(\mathfrak{sl}_2)$ into a matrix algebra and regard the equation $\phi_{\alpha, \beta}(u_{\alpha, \beta}) = 0$ as a system of polynomial equations of $\alpha$ and $\beta$. By the above discussion and Lemma~\ref{lem:CSA-u-alpha-beta-min-pol-multiple-roots}, the equation $\phi_{\alpha, \beta}(u_{\alpha, \beta}) = 0$ holds if $(\alpha, \beta) \in \bfk^2$ does not satisfy the polynomial equation $D(\alpha,\beta) = 0$.
  Hence the equation $\phi_{\alpha, \beta}(u_{\alpha, \beta}) = 0$ actually holds for all $(\alpha, \beta) \in \bfk^2$.
  Since $\phi_{\alpha,\beta}(X)$ and $m(X)$ has the same degree, we conclude that $\phi_{\alpha, \beta}(X)$ is the minimal polynomial of $u_{\alpha, \beta}$, no matter whether $\phi_{\alpha, \beta}(X)$ has multiple roots. The proof is done.
\end{proof}

\section{Coideal subalgebras of $\overline{\mathcal{O}}_q(SL_2)$}
\label{sec:CSA-Oq-SL2}

\subsection{The Hopf algebra $\overline{\mathcal{O}}_q(SL_2)$}

Continuing from the previous section, $\bfk$ is assumed to be an algebraically closed field of characteristic zero.
Let $N > 1$ be an odd integer, and let $q \in \bfk$ be a primitive $N$-th root of unity.
The algebra $\overline{\mathcal{O}}_q(SL_2)$ is generated by $a$, $b$, $c$ and $d$ subject to the defining relations
\begin{gather*}
  ba = qab, \quad ca = qac, \quad bc = cb, \quad db = qbd, \quad dc = qcd, \\
  ad - q^{-1} bc = da - qbc = a^N = d^N = 1, \quad b^N = c^N = 0.
\end{gather*}
The algebra $\overline{\mathcal{O}}_q(SL_2)$ has the Hopf algebra structure determined by
\begin{equation*}
  \begin{pmatrix} \Delta(a) & \Delta(b) \\ \Delta(c) & \Delta(d) \end{pmatrix}
  = \begin{pmatrix} a & b \\ c & d \end{pmatrix} \otimes
  \begin{pmatrix} a & b \\ c & d \end{pmatrix},
\end{equation*}
which reads, {\it e.g.}, $\Delta(a) = a \otimes a + b \otimes c$.

By the invertibility of $a$ and the relation $ad-q^{-1}bc = 1$, one can also define $\overline{\mathcal{O}}_q(SL_2)$ as an algebra generated by $a$, $b$ and $c$ subject to the relations $ba = qab$, $ca = qac$, $bc = cb$, $b^N = c^N = 0$ and $a^N = 1$.
Hence the set
\begin{equation}
  \label{eq:Oq-SL2-basis-1}
  \{ a^i b^j c^k \mid i, j, k \in [0, N) \}
\end{equation}
is a basis of $\overline{\mathcal{O}}_q(SL_2)$. A similar argument shows that the set
\begin{equation}
  \label{eq:Oq-SL2-basis-2}
  \{ b^i c^j d^k \mid i, j, k \in [0, N) \}
\end{equation}
is also a basis of $\overline{\mathcal{O}}_q(SL_2)$.

It is known that the Hopf algebras $\overline{\mathcal{O}}_q(SL_2)$ and $\overline{U}_q(\mathfrak{sl}_2)$ are dual to each other (see \cite[Appendix A]{MR3926231} for the detail).
More specifically, there is a unique isomorphism $\phi : \overline{\mathcal{O}}_q(SL_2) \to \overline{U}_q(\mathfrak{sl}_2)^*$ of Hopf algebras determined by
\begin{equation*}
  \begin{pmatrix}
    \langle \phi(a), u \rangle & \langle \phi(b), u \rangle \\
    \langle \phi(c), u \rangle & \langle \phi(d), u \rangle \end{pmatrix} = \rho(u)
  \quad (u \in \overline{U}_q(\mathfrak{sl}_2)),
\end{equation*}
where $\rho$ is the 2-dimensional representation of $\overline{U}_q(\mathfrak{sl}_2)$ given by
\begin{equation*}
  \rho(E) = \begin{pmatrix} 0 & 1 \\ 0 & 0 \end{pmatrix}, \quad
  \rho(F) = \begin{pmatrix} 0 & 0 \\ 1 & 0 \end{pmatrix}, \quad \text{and} \quad
  \rho(K) = \begin{pmatrix} q & 0 \\ 0 & q^{-1} \end{pmatrix}.
\end{equation*}

The isomorphism $\phi$ induces a Hopf pairing $(f, u) = \langle \phi(f), u \rangle$.
For $f \in \overline{\mathcal{O}}_q(SL_2)$ and $u \in \overline{U}_q(\mathfrak{sl}_2)$,
we set $f \leftharpoonup u = (f_{(1)}, u) f_{(2)}$.
This action makes $\overline{\mathcal{O}}_q(SL_2)$ a right module over $\overline{U}_q(\mathfrak{sl}_2)$. By Lemma~\ref{lem:dual-coideal-sub-2}, we have
\begin{equation*}
  \mathcal{C}(\overline{\mathcal{O}}_q(SL_2)) = \{ A^{\dcsa} \mid A \in \mathcal{C}(\overline{U}_q(\mathfrak{sl}_2)) \},
\end{equation*}
where $A^{\dcsa} = \{ f \in \overline{\mathcal{O}}_q(SL_2) \mid \text{$f \leftharpoonup u = \varepsilon(u) f$ for all $u \in A$} \}$.
Table \ref{tab:CSA-Oq-SL2} is a complete list of coideal subalgebras of $\overline{\mathcal{O}}_q(SL_2)$.
Their dimensions can be computed by Table~\ref{tab:CSA-Uq-sl2} and the equation \eqref{eq:dual-coideal-sub-dim}.
The generators of $\langle K^r \rangle^{\dcsa}$, $\langle K^r, E \rangle^{\dcsa}$, $\langle K^r, \tilde{F} \rangle^{\dcsa}$, $\langle E + \alpha K \rangle^{\dcsa}$ and $\langle \tilde{F} + \beta K \rangle^{\dcsa}$ will be determined in \S\ref{subsec:CSA-Oq-SL2-generators}.
The generators of $\langle E + \alpha \tilde{F} + \beta K \rangle^{\dcsa}$ and $\langle E + \alpha K, \tilde{F} + \beta K \rangle^{\dcsa}$ will be discussed in \S\ref{subsec:CSA-Oq-SL2-generators-2} and \S\ref{subsec:CSA-Oq-SL2-generators-3}, respectively.

\begin{table}
  \centering
  \renewcommand{\arraystretch}{1.5}
  \begin{tabular}{ccclcl}
    $A$ & Parameters & $\dim(A)$ & generators \\ \hline
    $\overline{U}_q(\mathfrak{sl}_2)^{\dcsa}$ & --- & $1$ & --- \\
    $\langle K^r \rangle^{\dcsa}$ & $r \mid N$ & $r N^2$ & $a^{N/r}$, $a^{-1}b$, $a c$ \\
    $\langle K^r, E \rangle^{\dcsa}$ & $r \mid N$ & $r N$ & $d^{N/r}$, $c d^{-1}$ \\
    $\langle K^r, \tilde{F} \rangle^{\dcsa}$ & $r \mid N$ & $r N$ & $a^{N/r}$, $a^{-1} b$ \\
    $\langle E + \alpha K, \tilde{F} + \beta K \rangle^{\dcsa}$ & $(1-q^2) \alpha \beta = 1$ & $N$ & \S\ref{subsec:CSA-Oq-SL2-generators-3} \\
    $\langle E + \alpha \tilde{F} + \beta K \rangle^{\dcsa}$ & $(\alpha, \beta) \ne (0,0)$ & $N^2$ & \S\ref{subsec:CSA-Oq-SL2-generators-2} \\
    $\langle \tilde{F} + \beta K \rangle^{\dcsa}$ & $\beta \ne 0$ & $N^2$ & \S\ref{subsec:CSA-Oq-SL2-generators} \\
  \end{tabular}
  \bigskip
  \caption{Coideal subalgebras of $\overline{O}_q(SL_2)$}
  \label{tab:CSA-Oq-SL2}
\end{table}

\subsection{The action of $\overline{U}_q(\mathfrak{sl}_2)$ on $\overline{\mathcal{O}}_q(SL_2)$}
\label{subsec:Uq-sl2-action-on-Oq-SL2}

To find generators of coideal subalgebras of $\overline{\mathcal{O}}_q(SL_2)$, we compute the action of $\overline{U}_q(\mathfrak{sl}_2)$ on $\overline{\mathcal{O}}_q(SL_2)$.
By the definition of the action, we have
\begin{equation*}
  \begin{pmatrix}
    a \leftharpoonup u & b \leftharpoonup u \\
    c \leftharpoonup u & d \leftharpoonup u    
  \end{pmatrix}
  = \begin{pmatrix}
    (a, u) a + (b, u) c & (a, u) b + (b, u) d \\
    (c, u) a + (d, u) c & (c, u) b + (d, u) d
  \end{pmatrix}
  = \rho(u) \begin{pmatrix} a & b \\ c & d \end{pmatrix}
\end{equation*}
for $u \in \overline{U}_q(\mathfrak{sl}_2)$. In particular, the actions of the generators are given by
\begin{equation}
  \label{eq:actions-generators}
  \begin{aligned}
    a \leftharpoonup E & = c,
    & b \leftharpoonup E & = d,
    & c \leftharpoonup E & = 0,
    & d \leftharpoonup E & = 0, \\
    a \leftharpoonup F & = 0,
    & b \leftharpoonup F & = 0,
    & c \leftharpoonup F & = a,
    & d \leftharpoonup F & = b, \\
    a \leftharpoonup K & = q a,
    & b \leftharpoonup K & = q b,
    & c \leftharpoonup K & = \smash{q^{-1}} c,
    & d \leftharpoonup K & = \smash{q^{-1}} d.
  \end{aligned}
\end{equation}
In principal, one can compute the action by \eqref{eq:actions-generators} and the rule
\begin{equation*}
  (s t) \leftharpoonup u
  = (s \leftharpoonup u_{(1)}) (t \leftharpoonup u_{(2)})
  \quad (s, t \in \overline{\mathcal{O}}_q(SL_2), u \in \overline{U}_q(\mathfrak{sl}_2)),
\end{equation*}
which follows from that $(-,-)$ is a Hopf pairing.
Being inspired by the co-double bosonization construction of $\overline{\mathcal{O}}_q(SL_2)$ explained in \cite[Theorem 4.1]{MR3872859}, we introduce new generators
\begin{equation}
  \label{eq:PBW-generators}
  x = b d^{-1} \ (= b d^{N-1}), \quad y = c d \quad \text{and} \quad z = d^2
\end{equation}
of $\overline{\mathcal{O}}_q(SL_2)$.
By the relations $d b = q b d$ and $d c = q d c$, we have
\begin{equation}
  \label{eq:PBW-base-change}
  x^i y^j z^k = q^{\frac{1}{2}i(i-1) - \frac{1}{2}j(j-1) - i j} b^i c^j d^{-i+j+2k}
  \quad (i, j \in [0, N), k \in \mathbb{Z}).
\end{equation}
Since~\eqref{eq:Oq-SL2-basis-2} is a basis of $\overline{\mathcal{O}}_q(SL_2)$, the set
\begin{equation}
  \label{eq:Oq-SL2-basis-3}
  \{ x^i y^j z^k \mid i, j, k \in [0, N) \}
\end{equation}
is also a basis of $\overline{\mathcal{O}}_q(SL_2)$.
With respect to this basis, the action of $\overline{U}_q(\mathfrak{sl}_2)$ is given as follows:

\begin{lemma}
  \label{lem:actions-x-y-z}
  For $i, j \in [0, N)$ and $k \in \mathbb{Z}$, we have
  \begin{align}
    \label{lem:action-E}
    x^i y^j z^k \leftharpoonup E
    & = q^{-2(j+k)+1} (i)_{q^2} x^{i-1} y^{j} z^k, \\
    \label{lem:action-F}
    x^i y^j z^k \leftharpoonup F
    & = (-i + 2j + 2k)_{q^2} x^{i+1} y^j z^k
      + q^{-2i} (j)_{q^2} x^i y^{j-1} z^k, \\
    \label{lem:action-K}
    x^i y^j z^k \leftharpoonup K
    & = q^{2(i-j-k)} x^i y^j z^k.
  \end{align}
\end{lemma}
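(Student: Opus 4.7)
The plan is to exploit the coalgebra structure of $\overline{U}_q(\mathfrak{sl}_2)$: from $\Delta(K) = K \otimes K$, $\Delta(E) = E \otimes K + 1 \otimes E$ and $\Delta(F) = F \otimes 1 + K^{-1} \otimes F$, the module rule $(st) \leftharpoonup u = (s \leftharpoonup u_{(1)})(t \leftharpoonup u_{(2)})$ becomes a multiplicative law for $K$ and skew Leibniz rules for $E$ and $F$. The first step is to compute the actions of $E$, $F$, $K$ on the new generators $x = bd^{-1}$, $y = cd$, $z = d^2$. Using \eqref{eq:actions-generators}, the identity $1 \leftharpoonup u = \varepsilon(u)$, and the commutation relations $db = qbd$, $dc = qcd$, one extracts $d^{-1} \leftharpoonup K = q d^{-1}$, $d^{-1} \leftharpoonup E = 0$, $d^{-1} \leftharpoonup F = -q^{-2} b d^{-2}$, and from these derives $x \leftharpoonup K = q^2 x$, $y \leftharpoonup K = q^{-2} y$, $z \leftharpoonup K = q^{-2} z$; $x \leftharpoonup E = q$, $y \leftharpoonup E = 0$, $z \leftharpoonup E = 0$; and $x \leftharpoonup F = -q^{-2} x^2$, $y \leftharpoonup F = 1 + (2)_{q^2} x y$, $z \leftharpoonup F = (2)_{q^2} x z$.

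Formula \eqref{lem:action-K} is then immediate by multiplicativity of the $K$-action. For \eqref{lem:action-E}, induction on $i$ with the skew Leibniz rule for $E$ yields $x^i \leftharpoonup E = q (i)_{q^2} x^{i-1}$, while $y^j \leftharpoonup E = 0$ and $z^k \leftharpoonup E = 0$ follow trivially. A two-step application of the same rule to the factorisation $x^i y^j z^k = (x^i)(y^j)(z^k)$, combined with $y^j \leftharpoonup K = q^{-2j} y^j$ and $z^k \leftharpoonup K = q^{-2k} z^k$, produces the claimed formula.

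The main obstacle is \eqref{lem:action-F}, which involves nonzero actions on all three generators and hence requires more care. The approach is first to establish, by induction, the single-variable formulas
\begin{equation*}
  x^i \leftharpoonup F = (-i)_{q^2} x^{i+1}, \qquad y^j \leftharpoonup F = (j)_{q^2} y^{j-1} + (2j)_{q^2} x y^j, \qquad z^k \leftharpoonup F = (2k)_{q^2} x z^k,
\end{equation*}
where the crucial commutations $y x = q^2 x y$ and $z x = q^2 x z$ (derived from $dc = qcd$ and $db = qbd$) are needed to move $x$ past $y$ and $z$ when the twisted term $(s \leftharpoonup K^{-1})(t \leftharpoonup F)$ is normalised into the basis \eqref{eq:Oq-SL2-basis-3}. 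The final consolidation, in which the coefficient of $x^{i+1} y^j z^k$ must be packaged as $(-i + 2j + 2k)_{q^2}$, rests on the telescoping identity $(a)_{q^2} + q^{2a} (b)_{q^2} = (a+b)_{q^2}$, which I would apply twice (once to pass from $i$ to $(i,j)$ and once more to absorb the $z^k$ contribution). Keeping track of the twist factors $q^{-2i}$ and $q^{2j}$ introduced by the commutation relations at each stage is the only delicate bookkeeping; once these are handled, the formula falls out.
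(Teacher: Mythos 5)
Your proof is correct; I verified the base cases ($x\leftharpoonup F=-q^{-2}x^2$, $y\leftharpoonup F=1+(2)_{q^2}xy$, $z\leftharpoonup F=(2)_{q^2}xz$, $x\leftharpoonup E=q$, etc.), the single-variable induction formulas, and the final telescoping $(-i)_{q^2}+q^{-2i}(2j)_{q^2}+q^{-2i+4j}(2k)_{q^2}=(-i+2j+2k)_{q^2}$, all of which reproduce \eqref{lem:action-E}--\eqref{lem:action-K}. The underlying engine is the same as the paper's (the Hopf pairing turns the actions of $E$ and $F$ into skew Leibniz rules, and one inducts on exponents), but the decomposition differs: the paper carries out the entire induction in the monomials $b^ic^jd^\ell$, obtaining $b^ic^jd^\ell\leftharpoonup E=q^{-\ell}(i)_{q^2}b^{i-1}c^jd^{\ell+1}$ and the analogous two-term formula for $F$ (using $ad=1+q^{-1}bc$ at one step), and only at the very end converts to the $(x,y,z)$-basis via \eqref{eq:PBW-base-change}; you instead compute the actions on the generators $x$, $y$, $z$ themselves (which requires first extracting $d^{-1}\leftharpoonup F=-q^{-2}bd^{-2}$ from $1\leftharpoonup F=0$) and induct directly in those coordinates. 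Your route trades the paper's final base-change bookkeeping for the preliminary computation on $d^{-1}$ and the commutations $yx=q^2xy$, $zx=q^2xz$; it also sidesteps the paper's remark that its formulas, proved for non-negative $\ell$, extend to all $\ell\in\mathbb{Z}$ by $N$-periodicity, since you work with $x=bd^{-1}$ from the start. Both are sound; yours arguably makes the appearance of the coefficient $(-i+2j+2k)_{q^2}$ more transparent, as it assembles directly from the three telescoping contributions.
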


When $i = 0$, we interpret the right-hand side of \eqref{lem:action-E}, having $x^{-1}$, as zero. The second term of the right-hand side of \eqref{lem:action-F} is also interpreted as zero when $j = 0$.
The integer $-i+2j+2k$ appearing in \eqref{lem:action-F} may be negative.
For $m \in \mathbb{Z}$ and $t \in \bfk \setminus \{ 0, 1 \}$, we understand the symbol $(m)_{t}$ as $(m)_{t} = (1 - t^m)/(1 - t)$.

\begin{proof}
  Equation \eqref{lem:action-K} is easily verified by \eqref{eq:actions-generators} and that the action of $K$ is an algebra automorphism.
  For $s, t \in \overline{\mathcal{O}}_q(SL_2)$ with $t \leftharpoonup E = 0$, we have
  \begin{equation*}
    (s t) \leftharpoonup E = (s \leftharpoonup E) (t \leftharpoonup K)
    + (s \leftharpoonup 1) (t \leftharpoonup E)
    = (s \leftharpoonup E) (t \leftharpoonup K)
  \end{equation*}
  and, in a similar way, $(t s) \leftharpoonup E = t (s \leftharpoonup E)$.
  Now let $i$, $j$ and $\ell$ be non-negative integers.
  By \eqref{eq:actions-generators}, we have
  \begin{align*}
    & b^i c^j d^{\ell} \leftharpoonup E
      = (b^i \leftharpoonup E) (c^j \leftharpoonup K) (d^{\ell} \leftharpoonup K) \\
    & = \sum_{r = 0}^{i-1}
      \underbrace{(b \leftharpoonup 1) \cdots (b \leftharpoonup 1)}_{i - r - 1}
      (b \leftharpoonup E)
      \underbrace{(b \leftharpoonup K) \cdots (b \leftharpoonup K)}_{r}
      \mbox{} \cdot q^{-j} c^j \cdot q^{-\ell} d^{\ell} \\
    & = \sum_{r = 0}^{i - 1} b^{i - r - 1 - j - \ell} \cdot d \cdot q^r b^r \cdot c^j d^{\ell}
      = \sum_{r = 0}^{i - 1} q^{2r - \ell} b^{i-1} c^j d^{\ell+1}
      = q^{-\ell} (i)_{q^2} b^{i-1} c^j d^{\ell+1}.
  \end{align*}
  In a similar way as above, we also have $b^i \leftharpoonup F = 0$,
  \begin{align*}
    c^j \leftharpoonup F
    & = \sum_{r = 0}^{j-1}
      \underbrace{(c \leftharpoonup K^{-1}) \cdots (c \leftharpoonup K^{-1})}_{r}
    (c \leftharpoonup F)
      \underbrace{(c \leftharpoonup 1) \cdots (c \leftharpoonup 1)}_{j - r - 1} \\
    & = \sum_{r = 0}^{i-1} q^{r} c^{r} \cdot a \cdot c^{j - r - 1}
      = \sum_{r = 0}^{i-1} q^{-j+2r+1} c^{j - 1} a
      = q^{-j+1} (j)_{q^2} c^{j-1} a,
    % \\
    % d^\ell \leftharpoonup F
    % & = \sum_{r = 0}^{\ell-1}
    % \underbrace{(d \leftharpoonup K^{-1}) \cdots (d \leftharpoonup K^{-1})}_{r} (c \leftharpoonup F)
    % \underbrace{(d \leftharpoonup 1) \cdots (d \leftharpoonup 1)}_{\ell - r - 1} \\
    % & = \sum_{r = 0}^{\ell-1} q^{r} d^{r} \cdot b \cdot d^{\ell-r-1}
    % = \sum_{r = 0}^{\ell-1} q^{2r} b d^{\ell-1} = (\ell)_{q^2} b d^{\ell-1}.
  \end{align*}
  and $d^{\ell} \leftharpoonup F = (\ell)_{q^2} b d^{\ell-1}$.
  Hence we have
  \begin{align*}
    b^i c^j d^{\ell} \leftharpoonup F
    & = (b^i \leftharpoonup K^{-1}) (c^j \leftharpoonup F) d^{\ell} + (b^i \leftharpoonup K^{-1}) (c^j \leftharpoonup K^{-1}) (d^{\ell} \leftharpoonup F) \\
    & = q^{-i} b^i \cdot q^{-j + 1} (j)_{q^2} c^{j-1} \cdot a d^{\ell}
      + q^{-i} b^i \cdot q^j (\ell)_{q^2} c^j \cdot b d^{\ell-1} \\
    & = q^{-i - j} b^i c^{j-1} \{ q (j)_{q^2} a d
      + q^{2j} (\ell)_{q^2} b c \} d^{\ell - 1} \\
    & = q^{-i - j} b^i c^{j-1} \{ q (j)_{q^2}
      + (j)_{q^2} b c + q^{2j}(\ell)_{q^2} b c \} d^{\ell - 1} \\
    & = q^{-i - j + 1} (j)_{q^2} b^i c^{j-1} d^{\ell-1}
      + q^{-i - j} (j + \ell)_{q^2} b^{i+1} c^j d^{\ell-1},
  \end{align*}
  where the relation $a d = 1 + q^{-1} b c$ was used at the fourth equality.
  Although we have assumed that $\ell$ is non-negative, the formulas
  \begin{align*}
    b^i c^j d^{\ell} \leftharpoonup E
    & = q^{-\ell} (i)_{q^2} b^{i-1} c^j d^{\ell+1}, \\
    b^i c^j d^{\ell} \leftharpoonup F
    & = q^{-i - j + 1} (j)_{q^2} b^i c^{j-1} d^{\ell-1}
      + q^{-i - j + 1} (j + \ell)_{q^2} b^{i+1} c^j d^{\ell-1}
  \end{align*}
  we have obtained are actually valid for all $\ell \in \mathbb{Z}$ since both sides depend only on the remainder of $\ell$ divided by $N$.
  Now the formulas \eqref{lem:action-E} and \eqref{lem:action-F} are obtained by the base change by \eqref{eq:PBW-base-change}.
\end{proof}

By this lemma, we have the following submodules:
\begin{equation}
  \label{eq:Uq-action-on-Oq-submod-Vk-def}
  V_k := \Span \{ x^i y^j z^k \mid i, j \in [0, N) \}
  \quad (k \in \mathbb{Z}).
\end{equation}

\begin{lemma}
  \label{lem:Uq-action-on-Oq-submod-mult}
  For all $k, \ell \in \mathbb{Z}$, we have $V_k \cdot V_{\ell} = V_{k+\ell}$.
\end{lemma}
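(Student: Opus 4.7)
The plan is to derive the commutation relations among $x$, $y$, $z$ and combine them with the facts that $x^N = y^N = 0$ and $z^N = 1$. From $db = qbd$ we get $d^{-1}b = q^{-1}bd^{-1}$, and then $d^2 b = q^2 b d^2$, so $zx = q^2 xz$. Similarly, $dc = qcd$ yields $zy = q^2 yz$, and combining $db = qbd$ with $d^{-1}c = q^{-1}cd^{-1}$ gives $yx = q^2 xy$. A short induction of the $q$-binomial type then yields $x^N = q^{-\binom{N}{2}} b^N d^{-N} = 0$, $y^N = q^{\binom{N}{2}} c^N d^N = 0$, and $z^N = d^{2N} = 1$. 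In particular, the spaces $V_k$ depend only on $k$ modulo $N$.

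For the inclusion $V_{k+\ell} \subset V_k \cdot V_\ell$, I would observe that $z^k$ lies in $V_k$ (take $i = j = 0$ in the defining spanning set) and that, for any basis element $x^i y^j z^\ell$ of $V_\ell$, the relations $zx = q^2 xz$ and $zy = q^2 yz$ give
\begin{equation*}
  z^k \cdot x^i y^j z^\ell = q^{2k(i+j)} \, x^i y^j z^{k+\ell},
\end{equation*}
which is a nonzero scalar multiple of a basis element of $V_{k+\ell}$.

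For the reverse inclusion $V_k \cdot V_\ell \subset V_{k+\ell}$, I would take two basis elements $x^i y^j z^k \in V_k$ and $x^{i'} y^{j'} z^\ell \in V_\ell$ and commute $z^k$ past $x^{i'} y^{j'}$ and then $y^j$ past $x^{i'}$ using the three relations above to obtain
\begin{equation*}
  (x^i y^j z^k)(x^{i'} y^{j'} z^\ell) = q^{2k(i'+j') + 2 j i'} \, x^{i+i'} y^{j+j'} z^{k+\ell}.
\end{equation*}
If either $i + i' \ge N$ or $j + j' \ge N$, this product vanishes by $x^N = 0 = y^N$; otherwise it is a basis element of $V_{k+\ell}$.

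There is no real obstacle here beyond bookkeeping: the whole lemma is essentially the statement that $\{x, y, z\}$ form a quantum-plane-like PBW system in which $z$ is a $\mathbb{Z}/N$-graded degree shift, while $x$ and $y$ are degree zero; the lemma is the multiplicative compatibility of that grading with the Hopf-module decomposition $\overline{\mathcal{O}}_q(SL_2) = \bigoplus_{k = 0}^{N-1} V_k$ implicit in Lemma~\ref{lem:actions-x-y-z}.
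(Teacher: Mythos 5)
Your proof is correct and is exactly the argument the paper has in mind: the paper's proof consists of the single remark that $x$, $y$, $z$ commute up to non-zero scalar multiples, and your computation of the relations $zx = q^2xz$, $zy = q^2yz$, $yx = q^2xy$ together with $x^N = y^N = 0$, $z^N = 1$ is just that remark carried out in full. No gaps; the two inclusions follow as you describe.
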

\begin{proof}
  This lemma is easily verified by noting that the generators $x$, $y$ and $z$ commute up to non-zero scalar multiples.
\end{proof}

\begin{lemma}
  \label{lem:Uq-action-on-Oq-submod-generator}
  The submodule $V_k$ is generated by $x^{N-1} y^{N-1} z^k$.
\end{lemma}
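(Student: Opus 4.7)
The plan is to show that the submodule $W \subset V_k$ generated by $\eta := x^{N-1} y^{N-1} z^k$ contains every basis vector $x^i y^j z^k$ with $i, j \in [0, N)$, and hence equals $V_k$ by the definition \eqref{eq:Uq-action-on-Oq-submod-Vk-def}. The strategy is to descend first in $j$ by repeated application of $F$, and then in $i$ by repeated application of $E$.

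The key observation is that $x^N = 0$ in $\overline{\mathcal{O}}_q(SL_2)$: by \eqref{eq:PBW-base-change}, $x^N$ is a non-zero scalar multiple of $b^N d^{-N}$, which vanishes because $b^N = 0$. Consequently, the first term on the right-hand side of \eqref{lem:action-F} vanishes whenever $i = N-1$, and the $F$-action collapses to
\[
x^{N-1} y^j z^k \leftharpoonup F = q^{-2(N-1)} (j)_{q^2} \, x^{N-1} y^{j-1} z^k.
\]
Since $N$ is odd and $q^2$ has order $N$, the $q^2$-integer $(j)_{q^2}$ is non-zero for every $j \in [1, N)$. A straightforward induction on $s \in [0, N)$ then places $x^{N-1} y^{N-1-s} z^k$ in $W$ for every $s$.

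Next, I would apply \eqref{lem:action-E}, which for $i \in [1, N)$ reads $x^i y^j z^k \leftharpoonup E = q^{-2(j+k)+1} (i)_{q^2} \, x^{i-1} y^j z^k$, with $(i)_{q^2} \ne 0$ for the same reason. Iterating $E$ starting from $x^{N-1} y^j z^k \in W$ places every $x^i y^j z^k$ with $i \in [0, N-1]$ and $j \in [0, N)$ into $W$, exhausting the basis \eqref{eq:Oq-SL2-basis-3} of $V_k$.

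No serious obstacle is anticipated. The only point worth noticing is that $\eta$ is chosen at the extreme corner $i = j = N-1$ precisely so that the two-term formula \eqref{lem:action-F} collapses to a single term; any other generator would force one to disentangle the two summands of the $F$-action and would make the descent considerably messier.
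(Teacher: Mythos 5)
Your proposal is correct and is essentially the paper's own argument: the paper's one-line proof applies $F^{N-1-j}$ followed by $E^{N-1-i}$ to $x^{N-1}y^{N-1}z^k$ and notes the result is a non-zero multiple of $x^i y^j z^k$, which is exactly your descent first in $j$ (where the vanishing of $x^N$ kills the first summand of \eqref{lem:action-F}) and then in $i$. Your write-up just makes explicit the non-vanishing of the $q^2$-integers and the role of the corner choice $i=j=N-1$, which the paper leaves implicit.
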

\begin{proof}
  By Lemma~\ref{lem:actions-x-y-z}, we have
  \begin{equation*}
    x^{N-1} y^{N-1} z^k \leftharpoonup F^{N-1-j}E^{N-1-i}
    = \text{(a non-zero constant)} \cdot x^{i} y^{j} z^k
  \end{equation*}
  for $i, j, k \in [0, N)$. 
\end{proof}

\subsection{Changing $q$ with $q^{-1}$}

The inverse of $q$ is also a root of unity of order $N$.
The Hopf algebra $\overline{U}_{q^{-1}}(\mathfrak{sl}_2)$ is known to be isomorphic to $\overline{U}_q(\mathfrak{sl}_2)$. More precisely, there is a unique isomorphism $\sigma : \overline{U}_{q^{-1}}(\mathfrak{sl}_2) \to \overline{U}_q(\mathfrak{sl}_2)$ of Hopf algebras sending $E$, $F$ and $K$ to $K F$, $E K^{-1}$ and $K$, respectively \cite[\S3.1.2]{MR1492989}.

There is also a unique isomorphism $\check{\sigma} : \overline{\mathcal{O}}_{q}(SL_2) \to \overline{\mathcal{O}}_{q^{-1}}(SL_2)$ of Hopf algebras such that $\check{\sigma}(a) = d$, $\check{\sigma}(b) = q c$, $\check{\sigma}(c) = q^{-1} b$ and $\check{\sigma}(d) = a$.
This is dual to the map $\sigma$ in the following sense:
For a while, we denote the pairing between $\overline{U}_q(\mathfrak{sl}_2)$ and $\overline{\mathcal{O}}_q(SL_2)$ by $(-,-)_q$ with specifying the parameter $q$.

\begin{lemma}
  For all $u \in \overline{U}_{q^{-1}}(\mathfrak{sl}_2)$ and $f \in \overline{\mathcal{O}}_q(SL_2)$, we have
  \begin{equation*}
    (f, \sigma(u))_q = (\check{\sigma}(f), u)_{q^{-1}}.
  \end{equation*}
\end{lemma}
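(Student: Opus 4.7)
The plan is to show that both sides of the claimed identity define Hopf pairings between $\overline{\mathcal{O}}_q(SL_2)$ and $\overline{U}_{q^{-1}}(\mathfrak{sl}_2)$ and then verify equality on algebra generators. Concretely, set
\begin{equation*}
  F(f, u) := (f, \sigma(u))_q, \qquad
  G(f, u) := (\check{\sigma}(f), u)_{q^{-1}}
  \quad (f \in \overline{\mathcal{O}}_q(SL_2), \; u \in \overline{U}_{q^{-1}}(\mathfrak{sl}_2)).
\end{equation*}
Since $(-,-)_q$ is a Hopf pairing and $\sigma$ is a Hopf algebra map, the identities $\Delta \sigma = (\sigma \otimes \sigma) \Delta$ and $\varepsilon \sigma = \varepsilon$ give $F(fg,u) = F(f,u_{(1)}) F(g,u_{(2)})$, $F(1,u) = \varepsilon(u)$, $F(f,uv) = F(f_{(1)},u) F(f_{(2)},v)$, and $F(f,1) = \varepsilon(f)$. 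The same four identities for $G$ follow from the fact that $(-,-)_{q^{-1}}$ is a Hopf pairing and $\check{\sigma}$ is a Hopf algebra map. Hence $F$ and $G$ are both bialgebra pairings.

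Two bialgebra pairings that agree on a generating set of each factor agree everywhere, so it suffices to verify $F(f,u) = G(f,u)$ for $f \in \{a,b,c,d\}$ and $u \in \{E, F, K\}$. For each fixed $u$ this is a $2 \times 2$ matrix identity: the LHS assembles into the matrix $\rho_q(\sigma(u))$, computed from $\sigma(E) = KF$, $\sigma(F) = EK^{-1}$, $\sigma(K) = K$ by multiplying the explicit matrices for $\rho_q(E), \rho_q(F), \rho_q(K)$. The RHS assembles into
\begin{equation*}
  \begin{pmatrix} (d, u)_{q^{-1}} & q (c, u)_{q^{-1}} \\ q^{-1} (b, u)_{q^{-1}} & (a, u)_{q^{-1}} \end{pmatrix},
\end{equation*}
whose entries are read off from $\rho_{q^{-1}}(u)$ together with the rule $\check{\sigma}(a)=d$, $\check{\sigma}(b)=qc$, $\check{\sigma}(c)=q^{-1}b$, $\check{\sigma}(d)=a$. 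For example, $u = E$ yields $\begin{pmatrix} 0 & 0 \\ q^{-1} & 0 \end{pmatrix}$ on both sides; for $u = F$ one obtains $\begin{pmatrix} 0 & q \\ 0 & 0 \end{pmatrix}$; and for $u = K$ one gets $\mathrm{diag}(q, q^{-1})$. The three checks complete the verification.

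There is no substantive obstacle: once the functoriality of the pairing construction under Hopf maps is invoked, the entire argument reduces to three $2\times 2$ matrix computations on the generators $E, F, K$. If any care is needed, it is only to use the correct comultiplication of $\overline{U}_{q^{-1}}(\mathfrak{sl}_2)$ (equivalently, the fact that $\sigma$ intertwines the two coproducts) when reducing to generators; after that, the matching of entries is forced by the definition of $\check{\sigma}$ and the shape of the $2$-dimensional representation $\rho_q$.
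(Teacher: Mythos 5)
Your proposal is correct and takes essentially the same route as the paper: check the identity on generators and extend it to all of $\overline{\mathcal{O}}_q(SL_2)$ and $\overline{U}_{q^{-1}}(\mathfrak{sl}_2)$ using that $\sigma$ and $\check{\sigma}$ are Hopf algebra maps and that $(-,-)_q$ and $(-,-)_{q^{-1}}$ are Hopf pairings. The explicit $2\times 2$ matrix verifications you record for $u = E, F, K$ (via $\rho_q(\sigma(u))$ on one side and $\rho_{q^{-1}}(u)$ twisted by $\check{\sigma}$ on the other) are exactly the ``routine'' computation the paper leaves to the reader, and they are carried out correctly.
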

\begin{proof}
  It is routine to verify that this equation holds for the case where $u$ and $f$ are generators. The general case follows from that $\sigma$ and $\check{\sigma}$ are Hopf algebra maps and $(-,-)_{q}$ and $(-,-)_{q^{-1}}$ are Hopf pairings.
\end{proof}

\begin{lemma}
  \label{lem:CSA-Oq-SL2-with-q-inverse}
  For a coideal subalgebra $A$ of $\overline{U}_{q^{-1}}(\mathfrak{sl}_2)$, we have
  \begin{equation*}
    \sigma(A)^{\dcsa} = \check{\sigma}(A^{\dcsa}).
  \end{equation*}
\end{lemma}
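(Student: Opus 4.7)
The plan is to exploit the invariance characterisation of $(-)^{\dcsa}$ from Lemma~\ref{lem:dual-coideal-sub-2} and transport it across the isomorphisms $\sigma$ and $\check{\sigma}$ by means of the pairing identity just established.

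The only step carrying any real content is the intertwining identity
\begin{equation*}
  \check{\sigma}(f \leftharpoonup \sigma(u))
  = \check{\sigma}(f) \leftharpoonup u
  \qquad (f \in \overline{\mathcal{O}}_q(SL_2),\; u \in \overline{U}_{q^{-1}}(\mathfrak{sl}_2)),
\end{equation*}
where the left-hand action uses $(-,-)_q$ and the right-hand action uses $(-,-)_{q^{-1}}$. To prove it I would expand $f \leftharpoonup \sigma(u) = (f_{(1)}, \sigma(u))_q \, f_{(2)}$ and apply $\check{\sigma}$. Since $\check{\sigma}$ is a coalgebra map, $\Delta \circ \check{\sigma} = (\check{\sigma} \otimes \check{\sigma}) \circ \Delta$, so the Sweedler components transfer cleanly; the preceding lemma then replaces $(f_{(1)}, \sigma(u))_q$ by $(\check{\sigma}(f_{(1)}), u)_{q^{-1}} = (\check{\sigma}(f)_{(1)}, u)_{q^{-1}}$, producing the right-hand side.

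Granted the intertwining, the conclusion unfolds by a direct chain of equivalences. Using $\varepsilon \circ \sigma = \varepsilon$ (as $\sigma$ is a Hopf algebra map) and Lemma~\ref{lem:dual-coideal-sub-2},
\begin{align*}
  f \in \sigma(A)^{\dcsa}
  & \iff f \leftharpoonup \sigma(a) = \varepsilon(a)\, f \text{ for every } a \in A \\
  & \iff \check{\sigma}(f) \leftharpoonup a = \varepsilon(a)\, \check{\sigma}(f) \text{ for every } a \in A \\
  & \iff \check{\sigma}(f) \in A^{\dcsa},
\end{align*}
where the middle equivalence applies the bijection $\check{\sigma}$ and then invokes the intertwining. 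Rearranging this correspondence yields the asserted equality of coideal subalgebras.

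I do not foresee a substantive obstacle; the argument is essentially a dualisation of the identity in the preceding lemma. The only point needing care is bookkeeping which pairing is in effect in each occurrence of $\leftharpoonup$ and which counit is meant by each $\varepsilon$, since the same symbols denote different maps on $\overline{\mathcal{O}}_q(SL_2)$ versus $\overline{\mathcal{O}}_{q^{-1}}(SL_2)$ and on $\overline{U}_q(\mathfrak{sl}_2)$ versus $\overline{U}_{q^{-1}}(\mathfrak{sl}_2)$.
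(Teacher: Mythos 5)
Your proposal is correct and follows essentially the same route as the paper: both establish the intertwining identity $\check{\sigma}(f)\leftharpoonup u=\check{\sigma}(f\leftharpoonup\sigma(u))$ from the pairing compatibility of the preceding lemma together with $\check{\sigma}$ being a coalgebra map, and then conclude via the invariance description of $(-)^{\dcsa}$ in Lemma~\ref{lem:dual-coideal-sub-2}. Your version merely spells out the final chain of equivalences that the paper leaves implicit.
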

\begin{proof}
  For $f \in \overline{\mathcal{O}}_q(SL_2)$ and $u \in \overline{U}_{q^{-1}}(\mathfrak{sl}_2)$, we have
  \begin{equation*}
    \check{\sigma}(f) \leftharpoonup u
    = (\check{\sigma}(f_{(1)}), u)_{q^{-1}} \check{\sigma}(f_{(2)})
    = (f_{(1)}, \sigma(u))_{q} \, \check{\sigma}(f_{(2)})
    = \check{\sigma}(f \leftharpoonup \sigma(u)).
  \end{equation*}
  This lemma follows from this equation and Lemma~\ref{lem:dual-coideal-sub-2}.
\end{proof}

\subsection{Generators of coideal subalgebras of $\overline{\mathcal{O}}_q(SL_2)$}
\label{subsec:CSA-Oq-SL2-generators}

We describe generators of most of coideal subalgebras of $\overline{\mathcal{O}}_q(SL_2)$.

\begin{theorem}
  \label{thm:CSA-Oq-SL2-generators}
  Let $\alpha, \beta \in \bfk$. Then we have
  \begin{align}
    \label{eq:CSA-Oq-SL2-1}
    \overline{U}_q(\mathfrak{sl}_2)^{\dcsa} & = \langle 1 \rangle, \\
    \label{eq:CSA-Oq-SL2-2}
    \langle K^r \rangle^{\dcsa} & = \langle a^{N/r}, a^{-1} b, a c \rangle = \langle d^{N/r}, b d, c d^{-1} \rangle \\
    \label{eq:CSA-Oq-SL2-3}
    \langle K^r, E \rangle^{\dcsa} & = \langle d^{N/r}, c d^{-1} \rangle, \\
    \label{eq:CSA-Oq-SL2-4}
    \langle K^r, \tilde{F} \rangle^{\dcsa} & = \langle a^{N/r}, a^{-1} b \rangle, \\
    \label{eq:CSA-Oq-SL2-5}
    \langle E + \alpha K \rangle^{\dcsa} & = \langle c d^{-1}, d^2 + (q-q^{-1}) \alpha b d \rangle, \\
    \label{eq:CSA-Oq-SL2-6}
    \langle \tilde{F} + \beta K \rangle^{\dcsa} & = \langle a^{-1} b, a^2 -q^2 \beta a c \rangle.
  \end{align}
  In particular, we have
  \begin{gather*}
    \langle K \rangle^{\dcsa} = \langle a^{-1} b, a c \rangle = \langle b d, c d^{-1} \rangle,
    \quad \langle E \rangle^{\dcsa} = \langle c, d \rangle,
    \quad \langle \tilde{F} \rangle^{\dcsa} = \langle a, b \rangle, \\
    \langle K, E \rangle^{\dcsa} = \langle c d^{-1} \rangle,
    \quad \langle K, \tilde{F} \rangle^{\dcsa} = \langle K, F \rangle^{\dcsa} = \langle a^{-1} b \rangle.
  \end{gather*}
\end{theorem}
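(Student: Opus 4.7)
The plan is to establish each equality by the two-step procedure: (i) show that the proposed generators lie in $A^{\dcsa}$, using the characterization
\[
  A^{\dcsa} = \{ h \in \overline{\mathcal{O}}_q(SL_2) \mid h \leftharpoonup u = \varepsilon(u) h \text{ for all } u \in A \}
\]
from Lemma~\ref{lem:dual-coideal-sub-2}, and (ii) show the subalgebra generated by these elements has dimension $N^3/\dim(A)$, which by \eqref{eq:dual-coideal-sub-dim} equals $\dim(A^{\dcsa})$.

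For step (i), the calculation reduces to applying the action of the algebra generators of $A$ (chosen among $K$, $K^r$, $E$, $\tilde F$, $E+\alpha K$, $\tilde F + \beta K$) to the claimed coalgebra-elements. Since $K$ is grouplike, $(-)\leftharpoonup K$ is an algebra automorphism, and \eqref{eq:actions-generators} shows that each of $a^{-1}b$, $ac$, $cd^{-1}$, $bd$ is a $K$-eigenvector (in fact fixed), so checks against $K^r$ are immediate. For $E$ and $F$ (via $\tilde F = (q-q^{-1})KF$), one iterates the Leibniz-type formulas $(st)\leftharpoonup E = (s\leftharpoonup E)(t\leftharpoonup K) + s(t\leftharpoonup E)$ and $(st)\leftharpoonup F = (s\leftharpoonup F)t + (s\leftharpoonup K^{-1})(t\leftharpoonup F)$, together with the base cases $c\leftharpoonup E = 0 = d\leftharpoonup E$ and $a\leftharpoonup F = 0 = b\leftharpoonup F$. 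This yields \eqref{eq:CSA-Oq-SL2-1}--\eqref{eq:CSA-Oq-SL2-4} quickly. For \eqref{eq:CSA-Oq-SL2-5}, one computes $cd^{-1}\leftharpoonup E = 0$, $cd^{-1}\leftharpoonup K = cd^{-1}$, and, writing $h = d^2 + (q-q^{-1})\alpha bd$, one gets $h\leftharpoonup E = \alpha(1-q^{-2})d^2$ and $h\leftharpoonup K = q^{-2}d^2 + (q-q^{-1})\alpha bd$, from which $h\leftharpoonup(E+\alpha K) = \alpha h$ follows; the verification for \eqref{eq:CSA-Oq-SL2-6} is parallel.

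For step (ii), the relations among the proposed generators come from the defining $q$-commutation relations of $\overline{\mathcal{O}}_q(SL_2)$. For instance, in \eqref{eq:CSA-Oq-SL2-2}, $a^{-1}b$ and $ac$ both $q^2$-commute past each other up to grouplike pieces, and $a^{N/r}$ central-like up to cyclotomic scalars, producing a spanning set $\{(a^{-1}b)^i(ac)^j a^{kN/r}\}$ of cardinality $r N^2$, which is linearly independent in the basis \eqref{eq:Oq-SL2-basis-1}. Analogous counts give \eqref{eq:CSA-Oq-SL2-3} and \eqref{eq:CSA-Oq-SL2-4}. The equality of the two descriptions in \eqref{eq:CSA-Oq-SL2-2} can be verified by showing mutual inclusion of the generators, noting $a^{-1}b$ and $bd$ differ by a unit in either subalgebra via $ad - q^{-1}bc = 1$.

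The main obstacle is the dimension count for \eqref{eq:CSA-Oq-SL2-5} (and symmetrically \eqref{eq:CSA-Oq-SL2-6}). Setting $y = cd^{-1}$ and $h = d^2 + (q-q^{-1})\alpha bd$, a direct calculation gives the $q$-commutation $h y = q^{2} y h$, and $y^N = q^{-\binom{N}{2}}c^N d^{-N} = 0$. For $h^N$, the relation $(bd)\cdot d^2 = q^{-2} d^2 \cdot (bd)$ lets us apply the $q$-binomial formula with $q^{-2}$; since $N$ is odd and $q^{-2}$ is a primitive $N$-th root of unity, the middle binomial coefficients vanish and
\[
  h^N = d^{2N} + ((q-q^{-1})\alpha)^N (bd)^N = 1 + 0 = 1.
\]
Hence $\langle y, h\rangle$ is spanned by the $N^2$ monomials $\{y^i h^j \mid i,j \in [0,N)\}$. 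Linear independence is checked by filtering $\overline{\mathcal{O}}_q(SL_2)$ by the power of $b$ and inspecting the leading contribution of each $y^i h^j$ inside the basis \eqref{eq:Oq-SL2-basis-3}. This yields $\dim\langle y, h\rangle = N^2 = \dim\langle E+\alpha K\rangle^{\dcsa}$, closing \eqref{eq:CSA-Oq-SL2-5}; the case \eqref{eq:CSA-Oq-SL2-6} is obtained either by the mirror argument or by transporting \eqref{eq:CSA-Oq-SL2-5} across the isomorphism of Lemma~\ref{lem:CSA-Oq-SL2-with-q-inverse}. Finally, the special values in the last displayed statements fall out by setting $r=1$ and $\alpha=0$ (respectively $\beta=0$) in the general formulas.
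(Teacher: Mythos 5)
Your proposal is correct and follows essentially the same strategy as the paper: check that the proposed generators are invariant under the action of $A$ via Lemma~\ref{lem:dual-coideal-sub-2}, then match dimensions using \eqref{eq:dual-coideal-sub-dim}. The only deviations are minor: the paper derives \eqref{eq:CSA-Oq-SL2-3} and \eqref{eq:CSA-Oq-SL2-4} from the lattice identity \eqref{eq:Masuoka-Skryabin-correspondence-3} rather than by a direct count, and for \eqref{eq:CSA-Oq-SL2-5} it obtains the lower dimension bound from a surjection onto $\overline{\mathcal{O}}_q(SL_2)/(x)$ --- which is the same device as your linear-independence check modulo powers of $b$, since $(x)=(b)$.
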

\begin{proof}
  Equation \eqref{eq:CSA-Oq-SL2-1} is obvious.
  By \eqref{eq:actions-generators}, it is easy to verify that the elements $a^{N/r}$, $a^{-1} b$ and $a c$ belong to $\langle K^r \rangle^{\dcsa}$. With the use of the basis \eqref{eq:Oq-SL2-basis-1}, we see that the dimension of the subalgebra generated by $a^{N/r}$, $a^{-1} b$ and $a c$ is $r N^2$, which is the dimension of $\langle K^r \rangle^{\dcsa}$. Hence we have the first equality in \eqref{eq:CSA-Oq-SL2-2}. The second equality in \eqref{eq:CSA-Oq-SL2-2} is obtained in a similar manner.

  Next, we show \eqref{eq:CSA-Oq-SL2-5}.
  For simplicity, we set $v_{\alpha} = E + \alpha K$.
  By Lemma~\ref{lem:actions-x-y-z}, it is straightforward to check that the elements
  \begin{equation*}
    s := y z^{N-1} = c d^{-1} \quad \text{and} \quad
    t := z + (q-q^{-1}) \alpha x z = d^2 + (q-q^{-1}) \alpha b d
  \end{equation*}
  belong to $\langle v_{\alpha} \rangle^{\dcsa}$. Now we set $A := \langle s, t \rangle$.
  Since the dimension of $\langle v_{\alpha} \rangle^{\dcsa}$ is $N^2$, we have $\dim(A) \le N^2$.
  We consider the algebra map
  \begin{equation*}
    % \label{eq:CSA-Oq-SL2-generators-proof-theta}
    \theta : A \xrightarrow{\quad i \quad}
    \overline{\mathcal{O}}_q(SL_2)
    \xrightarrow{\quad \pi \quad}
    T := \overline{\mathcal{O}}_q(SL_2)/(x),
  \end{equation*}
  where $i$ is the inclusion map and $\pi$ is the quotient map.
  It is easy to see that $T$ is an algebra of dimension $N^2$ generated by $\overline{y} := \pi(y)$ and $\overline{z} := \pi(z)$ subject to the relations $\overline{y}{}^N = 0$, $\overline{z}{}^N = 1$ and $\overline{z} \, \overline{y} = q^2 \overline{y} \, \overline{z}$.
  Since $\overline{y} = \theta(st)$ and $\overline{z} = \theta(t)$, the algebra map $\theta$ is surjective. By linear algebra, we obtain $\dim(A) \ge \dim(T) = N^2$ and therefore $A = \langle v_{\alpha} \rangle^{\dcsa}$. We have verified \eqref{eq:CSA-Oq-SL2-5}.

  Equation \eqref{eq:CSA-Oq-SL2-6} can be derived from \eqref{eq:CSA-Oq-SL2-5} and Lemma~\ref{lem:CSA-Oq-SL2-with-q-inverse}.
  To explain this in more detail, we denote the generators of $\overline{U}_{q^{-1}}(\mathfrak{sl}_2)$ and $\overline{\mathcal{O}}_{q^{-1}}(SL_2)$ by $E'$, $F'$, etc.\ to distinguish the generators of $\overline{U}_{q}(\mathfrak{sl}_2)$ and $\overline{\mathcal{O}}_{q}(SL_2)$. By applying \eqref{eq:CSA-Oq-SL2-5} to the coideal subalgebra $\langle E' + \alpha K' \rangle$ of $\overline{U}_{q^{-1}}(\mathfrak{sl}_2)$, we have
  \begin{equation*}
    \langle E' + \alpha K' \rangle^{\dcsa}
    = \langle c' d'{}^{-1}, d'{}^2 + (q^{-1} - q) \alpha b' d' \rangle
  \end{equation*}
  as subalgebras of $\overline{\mathcal{O}}_{q^{-1}}(SL_2)$.
  Hence, letting $\alpha = (q-q^{-1})^{-1} \beta$, we have
  \begin{gather*}
    \langle \tilde{F} + \beta K \rangle^{\dcsa}
    = \langle F K + \alpha K \rangle^{\dcsa}
    = \langle \sigma(E' + \alpha K') \rangle^{\dcsa} \\
    = \check{\sigma}(\langle E' + \alpha K' \rangle^{\dcsa})
    = \check{\sigma}(\langle c' d' {}^{-1}, d'{}^2 + (q^{-1} - q) \alpha b' d' \rangle) \\
    = \langle q^{-1} b a^{-1}, a^2 + (q^{-1} - q) \alpha q c a \rangle
    = \langle a^{-1} b, a^2 - q^2 \beta a c \rangle.
  \end{gather*}

  Equation \eqref{eq:CSA-Oq-SL2-3} is verified as follows:
  By \eqref{eq:Masuoka-Skryabin-correspondence-3}, \eqref{eq:CSA-Oq-SL2-2} and \eqref{eq:CSA-Oq-SL2-5}, we have
  \begin{equation*}
    \langle K^r, E \rangle^{\dcsa}
    = \langle K^r \rangle^{\dcsa} \cap \langle E \rangle^{\dcsa}
    = \langle d^{N/r}, b d, c d^{-1} \rangle \cap \langle c, d \rangle
    \supset \langle d^{N/r}, c d^{-1} \rangle.
  \end{equation*}
  Since both $\langle K^r, E \rangle^{\dcsa}$ and $\langle d^{N/r}, c d^{-1} \rangle$ have the same dimension, the last `$\supset$' is in fact an equality. We have proved \eqref{eq:CSA-Oq-SL2-3}.
  Equation \eqref{eq:CSA-Oq-SL2-4} is derived from \eqref{eq:CSA-Oq-SL2-3} in a similar way as we have derived \eqref{eq:CSA-Oq-SL2-6} from \eqref{eq:CSA-Oq-SL2-5}.
\end{proof}

It is not difficult to find relations between the generators of coideal subalgebras described in Theorem~\ref{thm:CSA-Oq-SL2-generators}. In more detail, we have:
\begin{enumerate}
\item The generators $\mathit{s} := b d$,
  $\mathit{t} := c d^{-1}$ and $u = d^{N/r}$ of $\langle K^r \rangle^{\dcsa}$ satisfy
  \begin{equation}
    \label{eq:CSA-Oq-SL2-Kr-dual-defining-relations}
    \mathit{s}^N = \mathit{t}^N = 0, \quad
    \mathit{u}^r = 1, \quad
    \mathit{t} \mathit{s} = q^{-2} \mathit{s} \mathit{t}, \quad
    \mathit{u} \mathit{s} = q^{N/r} \mathit{s} \mathit{u}, \quad
    \mathit{u} \mathit{t} = q^{N/r} \mathit{t} \mathit{u}
  \end{equation}
  and these equations are defining relations of $\langle K^r \rangle^{\dcsa}$.
\item There is an isomorphism of algebras
  \begin{equation*}
    T_{N,r}(q^{N/r}) \to \langle K^{r}, E \rangle^{\dcsa},
    \quad \mathtt{g} \mapsto d^{N/r},
    \quad \mathtt{x} \mapsto c d^{-1}.
  \end{equation*}
\item There is an isomorphism of algebras
  \begin{equation*}
    T_{N,r}(q^{-N/r}) \to \langle K^{r}, \tilde{F} \rangle^{\dcsa},
    \quad \mathtt{g} \mapsto a^{N/r},
    \quad \mathtt{x} \mapsto a^{-1} b.
  \end{equation*}
\item There is an isomorphism of algebras
  \begin{equation*}
    T_{N,N}(q^{-2}) \to \langle E + \alpha K \rangle^{\dcsa},
    \quad \mathtt{g} \mapsto d^2 + (q-q^{-1}) \alpha b d,
    \quad \mathtt{x} \mapsto c d^{-1}.
  \end{equation*}
\item There is an isomorphism of algebras
  \begin{equation*}
    T_{N,N}(q^{2}) \to \langle \tilde{F} + \beta K \rangle^{\dcsa},
    \quad \mathtt{g} \mapsto a^2 -q^2 \beta a c,
    \quad \mathtt{x} \mapsto a^{-1} b.
  \end{equation*}
\end{enumerate}

\subsection{The coideal subalgebra \texorpdfstring{$\langle E + \alpha \tilde{F} + \beta K \rangle^{\dcsa}$}{<E+α\~F+βK>†}}
\label{subsec:CSA-Oq-SL2-generators-2}

We fix $\alpha, \beta \in \bfk$ and discuss the coideal subalgebra $\langle u \rangle^{\dcsa}$, where $u = E + \alpha \tilde{F} + \beta K$.
Let $\phi_{\alpha,\beta}(X)$ be the minimal polynomial of $u$ given in Theorem~\ref{thm:CSA-u-alpha-beta-min-pol}.
Since $\phi_{\alpha,\beta}(X)$ has $\beta$ as a root,
\begin{equation*}
  \psi(X) := (X - \beta)^{-1} \phi_{\alpha,\beta}(X)
\end{equation*}
is a polynomial. Now we define $\Lambda_{\alpha,\beta} := \psi(u)$,
\begin{equation}
  \label{eq:CSA-Oq-SL2-generator-2}
  y_{\alpha,\beta} := \frac{q^{-1}}{(N-1)_{q^2}!}
  (x^{N-1} y \leftharpoonup \Lambda_{\alpha,\beta}), \quad
  z_{\alpha,\beta} := \frac{q^{-1}}{(N-1)_{q^2}!}
  (x^{N-1} z \leftharpoonup \Lambda_{\alpha,\beta}),
\end{equation}
where $x$, $y$ and $z$ are given by \eqref{eq:PBW-generators}.

\begin{theorem}
  \label{thm:CSA-Oq-SL2-generators-2}
  There is the following isomorphism of algebras:
  \begin{equation*}
    T_{N,N}(q^{-2}) \to \langle u \rangle^{\dcsa},
    \quad \mathtt{x} \mapsto y_{\alpha,\beta},
    \quad \mathtt{g} \mapsto z_{\alpha,\beta}.
  \end{equation*}
\end{theorem}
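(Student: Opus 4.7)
The plan is to apply Lemma~\ref{lem:dual-coideal-sub-3}: once a nonzero right integral $\Lambda$ of $\langle u \rangle$ is exhibited, $\langle u \rangle^{\dcsa}$ equals $\overline{\mathcal{O}}_q(SL_2) \leftharpoonup \Lambda$, and it remains to check that two explicit elements satisfy the defining relations of $T_{N,N}(q^{-2})$ and generate the image, which together force the map to be an isomorphism by a dimension count.

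\emph{Step 1 (integral).} I would first verify that $\Lambda_{\alpha,\beta} = \psi(u)$ is a nonzero right integral of $\langle u \rangle$. By Theorem~\ref{thm:CSA-u-alpha-beta-min-pol}, $\phi_{\alpha,\beta}(X) = (X-\beta)\psi(X)$ is the minimal polynomial of $u$, so $(u-\beta)\Lambda_{\alpha,\beta} = 0$; since $\langle u\rangle$ is commutative and $\varepsilon(u)=\beta$, this upgrades to $\Lambda_{\alpha,\beta} P(u) = P(\beta) \Lambda_{\alpha,\beta} = \varepsilon(P(u))\Lambda_{\alpha,\beta}$ for every polynomial $P$. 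Nonvanishing is immediate from $\deg \psi = N-1 < N$. Lemma~\ref{lem:dual-coideal-sub-3} then yields $\langle u\rangle^{\dcsa} = \overline{\mathcal{O}}_q(SL_2) \leftharpoonup \Lambda_{\alpha,\beta}$, whose dimension is $N^2$ by~\eqref{eq:dual-coideal-sub-dim}.

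\emph{Step 2 (location and relations).} Since $x^{N-1}y \in V_0$ and $x^{N-1}z \in V_1$ and the $V_k$ of~\eqref{eq:Uq-action-on-Oq-submod-Vk-def} are $\overline{U}_q(\mathfrak{sl}_2)$-submodules, we have $y_{\alpha,\beta} \in V_0$ and $z_{\alpha,\beta} \in V_1$, and by Lemma~\ref{lem:Uq-action-on-Oq-submod-mult} the products $y_{\alpha,\beta}^{j} z_{\alpha,\beta}^{k}$ lie in $V_k$. I would then expand $\Lambda_{\alpha,\beta} = \psi(u)$ and use Lemma~\ref{lem:actions-x-y-z} to compute $x^{N-1}y \leftharpoonup u^m$ and $x^{N-1}z \leftharpoonup u^m$ explicitly, from which the identities $y_{\alpha,\beta}^{N} = 0$, $z_{\alpha,\beta}^{N} = 1$, and $z_{\alpha,\beta} y_{\alpha,\beta} = q^{-2} y_{\alpha,\beta} z_{\alpha,\beta}$ should follow. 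As a sanity check, the specialization $\alpha = \beta = 0$ gives $\Lambda = u^{N-1} = E^{N-1}$, $y_{0,0} = cd$, $z_{0,0} = d^2$, from which the relations are immediate from the defining relations of $\overline{\mathcal{O}}_q(SL_2)$. The verified relations produce a well-defined algebra homomorphism $\theta : T_{N,N}(q^{-2}) \to \langle u \rangle^{\dcsa}$.

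\emph{Step 3 (surjectivity).} Since $\dim T_{N,N}(q^{-2}) = N^2 = \dim \langle u\rangle^{\dcsa}$, the map $\theta$ is an isomorphism as soon as it is surjective, or equivalently as soon as the $N^2$ monomials $\{y_{\alpha,\beta}^{j} z_{\alpha,\beta}^{k} : j,k \in [0,N)\}$ are linearly independent. By step 2 these monomials respect the direct-sum decomposition $\overline{\mathcal{O}}_q(SL_2) = \bigoplus_{k = 0}^{N-1} V_k$, so it suffices to show that for each $k$ the $N$ elements $\{y_{\alpha,\beta}^{j} z_{\alpha,\beta}^{k} : j \in [0,N)\}$ are linearly independent in $V_k \leftharpoonup \Lambda_{\alpha,\beta}$, and that the latter has dimension exactly $N$. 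Lemma~\ref{lem:Uq-action-on-Oq-submod-generator} reduces the computation of $V_k \leftharpoonup \Lambda_{\alpha,\beta}$ to the image of the operator $\Lambda_{\alpha,\beta} = \phi_{\alpha,\beta}(u)/(u-\beta)$ on a cyclic module, which one expects to coincide with the generalized $\beta$-eigenspace of the $N^2$-dimensional operator induced by $u$.

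\emph{Main obstacle.} The principal difficulty is the explicit verification of the three relations in step 2. A direct expansion of $x^{N-1}y \leftharpoonup u^m$ via Lemma~\ref{lem:actions-x-y-z} produces a sum of many PBW monomials, because $u = E + \alpha\tilde{F} + \beta K$ simultaneously activates three different operators on $\overline{\mathcal{O}}_q(SL_2)$, and reassembling these with the coefficients of $\psi$ into compact identities is delicate. A more conceptual route that I would pursue in parallel is to put the action of $u$ on each $V_k$ in generalized-eigenspace form using the factorization of $\phi_{\alpha,\beta}$, so that $\Lambda_{\alpha,\beta}$ manifestly projects onto a single eigenline; then both the relations and the surjectivity should collapse to a one-dimensional calculation in each graded piece $V_k$.
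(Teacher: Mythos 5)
Your Steps 1 and 3 set up the right framework (the integral $\Lambda_{\alpha,\beta}=\psi(u)$, Lemma~\ref{lem:dual-coideal-sub-3}, the dimension count $\dim\langle u\rangle^{\dcsa}=N^2=\dim T_{N,N}(q^{-2})$), and your Step 1 matches the paper's proof. But the argument has a genuine gap exactly where you flag the ``main obstacle'': Step 2 never actually verifies the relations $y_{\alpha,\beta}^N=0$, $z_{\alpha,\beta}^N=1$ and $z_{\alpha,\beta}y_{\alpha,\beta}=q^{-2}y_{\alpha,\beta}z_{\alpha,\beta}$. Since you build the homomorphism \emph{out of} the presented algebra $T_{N,N}(q^{-2})$, these relations are precisely what is needed for $\theta$ to be well defined, so without them the proof does not get off the ground. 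The specialization $\alpha=\beta=0$ and the proposed ``generalized eigenspace'' route are only heuristics; in particular the claim that $\Lambda_{\alpha,\beta}$ ``projects onto a single eigenline'' cannot be right as stated, since $V_k\leftharpoonup\Lambda_{\alpha,\beta}$ must be $N$-dimensional (the $N$ pieces sum to $\dim\langle u\rangle^{\dcsa}=N^2$), and when $\phi_{\alpha,\beta}$ has multiple roots the operator $u$ on $V_k$ need not be diagonalizable at all. Your Step 3 likewise leaves the linear independence of $\{y_{\alpha,\beta}^j z_{\alpha,\beta}^k\}$ unproved, and the natural route to it (invertibility of $z_{\alpha,\beta}$) depends on the unverified relation $z_{\alpha,\beta}^N=1$.

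The paper avoids all of this by reversing the direction of the map. The only structural fact it extracts about the generators is their form modulo the ideal $(x)$: since $\psi$ is monic of degree $N-1$, $\Lambda_{\alpha,\beta}=E^{N-1}+(\text{lower terms})$, and Lemma~\ref{lem:actions-x-y-z} shows every lower monomial sends $x^{N-1}y$ and $x^{N-1}z$ into $\Span\{x^iy^jz^k\mid i>0\}$; hence $y_{\alpha,\beta}=y+xf$ and $z_{\alpha,\beta}=z+xf'$. One then composes the inclusion $A_{\alpha,\beta}:=\langle y_{\alpha,\beta},z_{\alpha,\beta}\rangle\hookrightarrow\overline{\mathcal{O}}_q(SL_2)$ with the quotient $\overline{\mathcal{O}}_q(SL_2)\to\overline{\mathcal{O}}_q(SL_2)/(x)\cong T_{N,N}(q^{-2})$; this map needs no relation-checking, is surjective because the generators hit $\overline{y}$ and $\overline{z}$, and the inequalities $\dim A_{\alpha,\beta}\le\dim\langle u\rangle^{\dcsa}=N^2=\dim T_{N,N}(q^{-2})$ force it to be an isomorphism and force $A_{\alpha,\beta}=\langle u\rangle^{\dcsa}$ simultaneously. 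If you want to salvage your outline, replace Step 2 by this leading-term computation and the quotient map; as written, the proof is incomplete.
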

\begin{proof}
  We first note that $\Lambda := \Lambda_{\alpha, \beta}$ is a non-zero integral of $\langle u \rangle$. Indeed, it is not zero by the minimality of the polynomial $\phi_{\alpha,\beta}(X)$. Moreover, we have
  \begin{equation*}
    \Lambda u
    = (u - \beta) \Lambda + \beta \Lambda
    = \phi_{\alpha,\beta}(u) + \varepsilon(u) \Lambda = \varepsilon(u) \Lambda.
  \end{equation*}

  Since $\psi(X)$ is a monic polynomial of degree $(N-1)$, the element $\Lambda$ is of the form $E^{N-1}$ + (lower terms), where the order of monomials used here is the same as that used in the classification of coideal subalgebras of $\overline{U}_q(\mathfrak{sl}_2)$.
  Now let $m$ be a monomial strictly smaller than $E^{N-1}$.
  Then, by Lemma \ref{lem:actions-x-y-z}, when $x^{N-1} y \leftharpoonup m$ is expressed as a linear combination of the basis \eqref{eq:Oq-SL2-basis-3}, only the terms of the form $x^i y^j z^k$ with $i > 0$ can have non-zero coefficients. This observation yields that $y_{\alpha,\beta}$ is of the form
  \begin{equation*}
    y_{\alpha,\beta}
    = \frac{q^{-1}}{(N-1)_{q^2}!} \Big( x^{N-1} y \leftharpoonup E^{N-1} + x^{N-1} y \leftharpoonup (\text{lower terms}) \Big)
    = y + x f
  \end{equation*}
  for some $f \in \overline{\mathcal{O}}_q(SL_2)$.
  The same argument shows that $z_{\alpha,\beta}$ is of the form $z_{\alpha,\beta} = z + x f'$ for some $f' \in \overline{O}_q(SL_2)$.

  By Lemma~\ref{lem:dual-coideal-sub-3}, both $y_{\alpha,\beta}$ and $z_{\alpha,\beta}$ belong to the coideal subalgebra $\langle u \rangle^{\dcsa}$. To show that they generate $\langle u \rangle^{\dcsa}$, we let $A_{\alpha,\beta}$ be the subalgebra generated by $y_{\alpha,\beta}$ and $z_{\alpha,\beta}$.
  Now we deploy an argument similar to that used to find generators of $\langle E + \alpha K \rangle^{\dcsa}$ in Theorem~\ref{thm:CSA-Oq-SL2-generators}.
  Namely, we consider the algebra map
  \begin{equation*}
    \theta : A_{\alpha,\beta}
    \xrightarrow{\quad i \quad} \overline{\mathcal{O}}_q(SL_2)
    \xrightarrow{\quad \pi \quad} T := \overline{\mathcal{O}}_q(SL_2)/(x),
  \end{equation*}
  where $i$ is the inclusion map and $\pi$ is the quotient map.
  The algebra $T$ is isomorphic to $T_{N,N}(q^{-2})$ by the algebra map sending $\overline{y} := \pi(y)$ and $\overline{z} := \pi(z)$ to $X$ and $G$, respectively. By the above argument, we have
  \begin{equation*}
    \theta(y_{\alpha,\beta}) = \pi(y + x f) = \overline{y}
    \quad \text{and} \quad
    \theta(z_{\alpha,\beta}) = \pi(z + x f') = \overline{z}.
  \end{equation*}
  This implies that $\theta$ is surjective. By considering the dimensions of $A_{\alpha,\beta}$, $\langle u \rangle^{\dcsa}$ and $T$, we conclude that $A_{\alpha,\beta}$ is equal to $\langle u \rangle^{\dagger}$ and $\theta$ is an isomorphism.
  By composing the isomorphism $T_{N,N}(q^{-2}) \cong T$ and $\theta^{-1}$, we obtain an isomorphism as stated in this theorem. The proof is done.
\end{proof}

\begin{remark}
  The bijectivity of the map $\theta$ used in the above proof implies that $y_{\alpha,\beta}$ and $z_{\alpha,\beta}$ are characterized as unique elements of $\langle E + \alpha \tilde{F} + \beta K \rangle^{\dcsa}$ of the form $y_{\alpha,\beta} = y + x f$ and $z_{\alpha,\beta} = z + x f'$ for some $f, f' \in \overline{\mathcal{O}}_q(SL_2)$.
  For $\alpha = 0$, one can verify that $y_{0,\beta}$ and $z_{0,\beta}$ are given by
  \begin{equation*}
    y_{0,\beta} = y + (1-q^{-2}) \beta x y
    \quad \text{and} \quad
    z_{0,\beta} = z + (q-q^{-1}) \beta x z,
  \end{equation*}
  respectively, with the help of Lemma~\ref{lem:actions-x-y-z}.
  In general, it seems to be difficult to express $y_{\alpha,\beta}$ and $z_{\alpha,\beta}$ as linear combinations of bases \eqref{eq:Oq-SL2-basis-1}, \eqref{eq:Oq-SL2-basis-2} or \eqref{eq:Oq-SL2-basis-3}.
\end{remark}

\subsection{The coideal subalgebra \texorpdfstring{$\langle E + \lambda K, \tilde{F} + \mu K \rangle^{\dcsa}$}{<E+λK, \~F+μK>†}}
\label{subsec:CSA-Oq-SL2-generators-3}

We fix elements $\lambda$ and $\mu$ of $\bfk$ satisfying $(1-q^2) \lambda \mu = 1$.
In this subsection, we discuss the coideal subalgebra of $\overline{\mathcal{O}}_q(SL_2)$ corresponding to $B_{\lambda,\mu} = \langle v_{\lambda}, w_{\mu} \rangle$, where $v_{\lambda} = E + \lambda K$ and $w_{\mu} = \tilde{F} + \mu K$. Our result is summarized as follows:

\begin{theorem}
  \label{thm:CSA-Oq-SL2-generators-3}
  Let $\Lambda$ be a non-zero right integral of $B_{\lambda,\mu}$ and set
  \begin{equation}
    \label{eq:CSA-Oq-SL2-generator-3}
    w = x^{N-1} y^{N-1} z \leftharpoonup \Lambda.
  \end{equation}
  The coideal subalgebra $(B_{\lambda,\mu})^{\dcsa}$ is generated by $w$, and the minimal polynomial of $w$ is $X^N - \varepsilon(w)^N$. Furthermore, we have $\varepsilon(w) \ne 0$ and thus $(B_{\lambda,\mu})^{\dcsa}$ is semisimple as an algebra.
\end{theorem}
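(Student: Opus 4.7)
The plan is to exploit the $\mathbb{Z}/N$-grading $\overline{\mathcal{O}}_q(SL_2) = \bigoplus_{k=0}^{N-1} V_k$ given by the submodules of~\eqref{eq:Uq-action-on-Oq-submod-Vk-def}. By~\eqref{eq:dual-coideal-sub-dim} together with Theorem~\ref{thm:CSA-v-lambda-w-mu-structure}, $\dim (B_{\lambda,\mu})^{\dcsa} = N$. Since each $V_k$ is a right $B_{\lambda,\mu}$-submodule of $\overline{\mathcal{O}}_q(SL_2)$ under $\leftharpoonup$, Lemma~\ref{lem:dual-coideal-sub-3} yields
\begin{equation*}
  (B_{\lambda,\mu})^{\dcsa} = \overline{\mathcal{O}}_q(SL_2) \leftharpoonup \Lambda = \bigoplus_{k=0}^{N-1} (V_k \leftharpoonup \Lambda),
\end{equation*}
and a decomposition argument along the grading shows $V_k \leftharpoonup \Lambda = V_k \cap (B_{\lambda,\mu})^{\dcsa}$, so that $(B_{\lambda,\mu})^{\dcsa}$ is a $\mathbb{Z}/N$-graded subalgebra of $\overline{\mathcal{O}}_q(SL_2)$.

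The central technical step is to prove that each graded piece $V_k \leftharpoonup \Lambda$ has dimension one. I would establish this by showing that the map $\phi_k \colon B_{\lambda,\mu} \to V_k$, $a \mapsto \eta_k \leftharpoonup a$, with $\eta_k := x^{N-1} y^{N-1} z^k$, is an isomorphism of right $B_{\lambda,\mu}$-modules. Lemma~\ref{lem:Uq-action-on-Oq-submod-generator} says that $\eta_k$ generates $V_k$ as a $\overline{U}_q(\mathfrak{sl}_2)$-module, and $\dim B_{\lambda,\mu} = N^2 = \dim V_k$, so it suffices to check surjectivity. Using Lemma~\ref{lem:actions-x-y-z}, the action of $v_\lambda$ strictly lowers the $x$-exponent (from $E$) together with a nonzero diagonal contribution (from $K$), and $w_\mu$ acts analogously on the $y$-exponent, so a triangular induction shows that $\{\eta_k \leftharpoonup v_\lambda^{\,i} w_\mu^{\,j}\}_{i,j \in [0,N)}$ already spans $V_k$. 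Under $\phi_k$ the action of $\Lambda$ corresponds to right multiplication by $\Lambda$ in $B_{\lambda,\mu}$, so $V_k \leftharpoonup \Lambda \cong B_{\lambda,\mu} \Lambda$, and a direct computation in the Taft presentation $B_{\lambda,\mu} \cong T_{N,N}(q^2)$ from Theorem~\ref{thm:CSA-v-lambda-w-mu-structure}, in which $\Lambda$ corresponds to $\mathtt{x}^{N-1}(1 + \mathtt{g} + \cdots + \mathtt{g}^{N-1})$, gives $B_{\lambda,\mu} \Lambda = \bfk \Lambda$.

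Granted that each graded piece has dimension one, $V_0 \cap (B_{\lambda,\mu})^{\dcsa} = \bfk \cdot 1$ because this subalgebra contains the identity. The multiplicativity $V_k V_\ell = V_{k+\ell}$ of Lemma~\ref{lem:Uq-action-on-Oq-submod-mult} then forces $w^N \in V_N = V_0$ to lie in $\bfk \cdot 1$, and applying $\varepsilon$ yields $w^N = \varepsilon(w)^N$. Moreover, the elements $1, w, w^2, \ldots, w^{N-1}$ sit in pairwise distinct graded components, so they are linearly independent iff each is nonzero; once $\varepsilon(w) \neq 0$ is shown, $w^N = \varepsilon(w)^N \neq 0$ and hence no $w^k$ vanishes, so the set $\{1, w, \ldots, w^{N-1}\}$ is a basis of the $N$-dimensional algebra $(B_{\lambda,\mu})^{\dcsa}$, and $w$ has minimal polynomial $X^N - \varepsilon(w)^N$.

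The remaining and main obstacle is to verify $\varepsilon(w) \neq 0$, equivalently $(x^{N-1} y^{N-1} z, \Lambda) \neq 0$. I would compute this Hopf pairing by interpreting $x^{N-1} y^{N-1} z$ as a matrix coefficient of $\rho^{\otimes 2N}$ for the fundamental representation $\rho$, expanding $\Lambda$ in the PBW basis of $\overline{U}_q(\mathfrak{sl}_2)$, and isolating the unique surviving term: the leading summand of $(w_\mu v_\lambda)^{N-1}$ inside $\Lambda$ is, up to a nonzero scalar, $(KFE)^{N-1}$, and this is the only PBW monomial whose iterated coproduct can pair nontrivially against $x^{N-1} y^{N-1} z$, producing a value proportional to $((N-1)_{q^2}!)^2$ and a nonzero product involving powers of $q$, $\lambda^{-1}$ and $\mu^{-1}$. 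Once $\varepsilon(w) \neq 0$, the polynomial $X^N - \varepsilon(w)^N$ has $N$ distinct roots, so $(B_{\lambda,\mu})^{\dcsa} \cong \bfk[X]/(X^N - \varepsilon(w)^N) \cong \bfk^N$ is semisimple, completing the proof.
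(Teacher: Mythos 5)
Your overall architecture is close to the paper's: both arguments rest on the decomposition $\overline{\mathcal{O}}_q(SL_2) = \bigoplus_k V_k$, the multiplicativity $V_k V_\ell = V_{k+\ell}$ of Lemma~\ref{lem:Uq-action-on-Oq-submod-mult}, and the fact that $\phi_k(u) = x^{N-1}y^{N-1}z^k \leftharpoonup u$ restricts to an isomorphism $B_{\lambda,\mu} \to V_k$ (the paper's Lemma~\ref{lem:CSA-Oq-SL2-generators-3-lemma-2}; your ``triangular induction'' is essentially its proof, modulo the cross term in \eqref{lem:action-F} that the paper handles by passing to $U/\Ker(\phi_k)$ and a dimension count rather than by strict triangularity). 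Your derivation of $\dim(V_k \cap A) = 1$ from $B_{\lambda,\mu}\Lambda = \bfk\Lambda$ and $\phi_k(u)\leftharpoonup\Lambda = \phi_k(u\Lambda)$ is a nice variant not in the paper, and it does give $w = \phi_1(\Lambda) \ne 0$ directly.

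The genuine weak point is the step $\varepsilon(w) \ne 0$, which is exactly where the paper and you part ways, and which you correctly identify as the main obstacle but do not actually carry out. As you note, knowing $w \ne 0$ and $w^N = \varepsilon(w)^N \cdot 1$ does not by itself exclude $w$ being nilpotent (each $V_k \cap A$ is one-dimensional, but nothing so far forces it to be spanned by $w^k$), so everything hinges on this non-vanishing. The paper avoids the computation entirely via Lemma~\ref{lem:CSA-Oq-SL2-generators-3-lemma-1}: since the left $B_{\lambda,\mu}$-module $\overline{U}_q(\mathfrak{sl}_2)$ is free with basis $\{1, K, \dots, K^{N-1}\}$, the quotient coalgebra $U/B^+U$ is grouplike-spanned, whence $(B_{\lambda,\mu})^{\dcsa} \cong \bfk^N$ has no nonzero nilpotents; then $w^N \ne 0$ is automatic and $\varepsilon(w)^N = w^N \ne 0$ follows. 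Your proposed substitute --- expanding $\Lambda$ in PBW monomials and pairing against $x^{N-1}y^{N-1}z = \mathrm{const}\cdot b^{N-1}c^{N-1}d^2$ --- is viable: the degree argument isolating $E^{N-1}F^{N-1}K^{\bullet}$ as the only contributing monomial is correct (each $(b,-)$ consumes exactly one $E$ and each $(c,-)$ exactly one $F$ in the iterated coproduct), and the surviving coefficient is indeed a product of nonzero $q$-factorials since $(N-1)_{q^2}! \ne 0$. But as written this is an announcement of a computation, not a computation, and it is the one place where the theorem could fail; you should either carry it out explicitly or, better, prove the semisimplicity of $(B_{\lambda,\mu})^{\dcsa}$ first, as the paper does, which renders the pairing computation unnecessary.
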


A non-zero right integral of $B_{\lambda,\mu}$ has been given in Remark~\ref{rem:CSA-v-w-integral}. Like the case discussed in the previous subsection, it seems to be difficult to express the generator $w$ as linear combinations of bases \eqref{eq:Oq-SL2-basis-1}, \eqref{eq:Oq-SL2-basis-2} or \eqref{eq:Oq-SL2-basis-3}.

We begin the proof of this theorem by showing:

\begin{lemma}
  \label{lem:CSA-Oq-SL2-generators-3-lemma-1}
  There is an isomorphism $(B_{\lambda, \mu})^{\dcsa} \cong \bfk^{N}$ of algebras, where the right hand side is the product of $N$ copies of the algebra $\bfk$.
\end{lemma}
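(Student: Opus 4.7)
The dimension formula~\eqref{eq:dual-coideal-sub-dim} together with $\dim B_{\lambda,\mu} = N^2$ (Lemma~\ref{lem:CSA-v-w-relations}) gives $\dim (B_{\lambda,\mu})^{\dcsa} = N$. The plan is to identify $(B_{\lambda,\mu})^{\dcsa}$ with the semisimple commutative quotient of $\overline{\mathcal{O}}_q(SL_2)$ by its two-sided ideal $(b, c)$.

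The quotient $\overline{\mathcal{O}}_q(SL_2)/(b, c)$ is generated by $a, d$ subject to $ad = da = 1$ and $a^N = 1$, so it is the group algebra $\bfk[\mathbb{Z}/N\mathbb{Z}] \cong \bfk^N$ (the last isomorphism holding because $\mathrm{char}(\bfk) = 0$ and $N \geq 2$). Let $\pi : \overline{\mathcal{O}}_q(SL_2) \twoheadrightarrow \bfk^N$ denote this quotient algebra map. Then $\pi|_{(B_{\lambda,\mu})^{\dcsa}}$ is an algebra map between two $N$-dimensional algebras, so to obtain $(B_{\lambda,\mu})^{\dcsa} \cong \bfk^N$ it suffices to show that $\pi|_{(B_{\lambda,\mu})^{\dcsa}}$ is surjective (equivalently, injective).

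For surjectivity I would exploit the $\mathbb{Z}/N\mathbb{Z}$-grading $\overline{\mathcal{O}}_q(SL_2) = \bigoplus_k V_k$ from Lemma~\ref{lem:Uq-action-on-Oq-submod-mult}, which is preserved by the $\overline{U}_q(\mathfrak{sl}_2)$-action (Lemma~\ref{lem:actions-x-y-z}) and hence by the $B_{\lambda,\mu}$-action. Applying Lemma~\ref{lem:dual-coideal-sub-3} to the right integral $\Lambda = e_0 \tilde{e}_0$ from Remark~\ref{rem:CSA-v-w-integral}, for each $k \in [0, N)$ the element $w_k := x^{N-1} y^{N-1} z^k \leftharpoonup \Lambda$ lies in $V_k \cap (B_{\lambda,\mu})^{\dcsa}$. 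Since $\pi(x) = \pi(y) = 0$ and $\pi(z) = a^{-2}$, the PBW basis~\eqref{eq:Oq-SL2-basis-3} gives $\pi(w_k) = c_k \cdot a^{-2k}$, where $c_k \in \bfk$ is the coefficient of $x^0 y^0 z^k$ in $w_k$. Because $\gcd(2, N) = 1$, the elements $a^{-2k}$ for $k \in [0, N)$ form a basis of $\bfk^N$, so it would suffice to prove $c_k \neq 0$ for every $k$.

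The main obstacle is this non-vanishing. I plan to compute $c_k$ by expanding $\Lambda = \bigl(\sum_{i=0}^{N-1} v^i\bigr) \bigl(\sum_{j=0}^{N-1} w^j\bigr)$ (using $v = v_\lambda/\lambda$ and $w = w_\mu/\mu$) and tracking the $x^0 y^0 z^k$-contribution via Lemma~\ref{lem:actions-x-y-z}. The $E$-part of $v$ is the only component that can decrease $x$-power, and no action of $B_{\lambda,\mu}$ decreases $x$-power once it becomes positive; analogously for $y$-power under the $\tilde{F}$-part of $w$. Consequently only the $E^{N-1} \tilde{F}^{N-1}$-component of $v^{N-1} w^{N-1}$ inside $\Lambda$ contributes to $c_k$, yielding a nonzero scalar multiple of
\[
  (\lambda\mu)^{-(N-1)} \bigl((N-1)_{q^2}!\bigr)^2 (q - q^{-1})^{N-1},
\]
which is nonzero since $\lambda\mu = (1 - q^2)^{-1} \neq 0$. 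This would give $c_k \neq 0$ for all $k$, hence $\pi|_{(B_{\lambda,\mu})^{\dcsa}}$ is surjective, completing the proof.
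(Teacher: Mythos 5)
Your proposal is correct, but it takes a genuinely different route from the paper. The paper stays entirely on the $\overline{U}_q(\mathfrak{sl}_2)$ side: writing $A = B_{\lambda,\mu}$ and $H = \overline{U}_q(\mathfrak{sl}_2)$, it observes that $A^{\dcsa} \cong (H/A^+H)^*$ by definition, that $H$ is free of rank $N$ over $A$ with basis $\{1, K, \dots, K^{N-1}\}$, and hence that $H/A^+H$ is the group coalgebra spanned by the grouplike classes of $K^i$; dualizing immediately gives $\bfk^N$. That argument is a few lines and needs no integrals and no computation in $\overline{\mathcal{O}}_q(SL_2)$. You instead work on the $\overline{\mathcal{O}}_q(SL_2)$ side, producing explicit elements $w_k = x^{N-1}y^{N-1}z^k \leftharpoonup \Lambda$ of $(B_{\lambda,\mu})^{\dcsa} \cap V_k$ via Lemma~\ref{lem:dual-coideal-sub-3} and showing they map onto a basis of $\overline{\mathcal{O}}_q(SL_2)/(b,c) \cong \bfk^N$. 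Your reduction of the non-vanishing of $c_k$ to the $E^{N-1}\tilde{F}^{N-1}$-component of $v^{N-1}w^{N-1}$ is sound: by Lemma~\ref{lem:actions-x-y-z} only $E$ lowers the $x$-power and only the second branch of the $F$-action lowers the $y$-power, and in $\Lambda = e_0\tilde{e}_0$ all $E$'s act before all $\tilde{F}$'s, so reaching $x^0y^0z^k$ from $x^{N-1}y^{N-1}z^k$ forces exactly $N-1$ of each, and the resulting coefficient is a product of nonzero factors (each $(m)_{q^2} \ne 0$ for $m \in [1,N)$ since $N$ is odd). The cost of your approach is that this combinatorial bookkeeping is the whole proof and is only sketched; the payoff is that you get strictly more than the lemma asserts — your $w_k$ essentially anticipate Lemma~\ref{lem:CSA-Oq-SL2-generators-3-lemma-2} and the nonvanishing $\varepsilon(w) \ne 0$ in Theorem~\ref{thm:CSA-Oq-SL2-generators-3}, whereas the paper derives those facts later from the semisimplicity established here. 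If you write this up, do spell out the branch analysis carefully (in particular that the $x$-power is non-decreasing throughout the $w^j$-part), since that is where all the content lives.
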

\begin{proof}
  We set $H = \overline{U}_q(\mathfrak{sl}_2)$ and $A = \langle v_{\lambda}, w_{\mu} \rangle$.
  By definition, $A^{\dcsa}$ is isomorphic to the dual algebra of the quotient coalgebra $H/A^{+}H$.
  Since the left $A$-module $H$ has the set $\{ 1, K, \cdots, K^{N-1} \}$ as a basis, and since the equation $a h = \varepsilon(a) h$ holds in $H/A^{+}H$ for $a \in A$ and $h \in H$, the vector space $H/A^{+}H$ is spanned by the classes of $1, K, \cdots, K^{N-1}$.
  Since the dimension of $H/A^{+}H$ is $N$, they in fact form a basis of $H/A^{+}H$.
  Therefore $H/A^{+}H$ is isomorphic to $\bfk \langle K \rangle$ as a coalgebra. Thus we have $A^{\dagger} \cong (H/A^{+}H)^* \cong (\bfk \langle K \rangle)^* \cong \bfk^N$ as algebras. The proof is done.
\end{proof}

Since $\Lambda$ is a right integral of $B_{\lambda,\mu}$, the element $w$ belongs to $(B_{\lambda,\mu})^{\dcsa}$. However, because of the complexity of the expression of $w$, it is not obvious whether $w$ is even a non-zero element. A key observation for showing $w \ne 0$ is:

\begin{lemma}
  \label{lem:CSA-Oq-SL2-generators-3-lemma-2}
  For each $k \in [0, N)$, we define the linear map
  \begin{equation*}
    \phi_k : \overline{U}_q(\mathfrak{sl}_2) \to V_k,
    \quad \phi_k(u) = x^{N-1} y^{N-1} z^{k} \leftharpoonup u,
  \end{equation*}
  where $V_k$ is the submodule of $\overline{\mathcal{O}}_q(SL_2)$ introduced in Subsection~\ref{subsec:Uq-sl2-action-on-Oq-SL2}.
  The restriction of $\phi_k$ to $B_{\lambda,\mu}$ is an isomorphism.
\end{lemma}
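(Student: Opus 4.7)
The plan is to verify that $\phi_k|_{B_{\lambda,\mu}}$ is surjective; since $\dim B_{\lambda,\mu} = N^2$ (Lemma~\ref{lem:CSA-v-w-relations}) and $\dim V_k = N^2$ by its definition in \eqref{eq:Uq-action-on-Oq-submod-Vk-def}, such surjectivity will force $\phi_k|_{B_{\lambda,\mu}}$ to be an isomorphism. The key observation is that $\phi_k$ is a map of right $\overline{U}_q(\mathfrak{sl}_2)$-modules, where $\overline{U}_q(\mathfrak{sl}_2)$ acts on itself by right multiplication and on $V_k$ via $\leftharpoonup$; consequently $\phi_k|_{B_{\lambda,\mu}}$ is a right $B_{\lambda,\mu}$-module map, and surjectivity amounts to exhibiting $e_{N-1,N-1} := x^{N-1}y^{N-1}z^k$ as a cyclic generator of $V_k$ over $B_{\lambda,\mu}$.

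I will work with the renormalized generators $v = \lambda^{-1} v_{\lambda}$ and $w = \mu^{-1} w_{\mu}$ from Subsection~\ref{subsec:CSA-v-alpha-w-beta}, and reduce the cyclicity statement to two one-parameter arguments using the explicit formulas of Lemma~\ref{lem:actions-x-y-z}. First, the subspace $R := \Span\{e_{N-1,j} : j \in [0,N)\}$ is preserved by $w$, because the would-be ``raising'' term $e_{N,j}$ vanishes in $\overline{\mathcal{O}}_q(SL_2)$ (as $x^N = 0$). In the ordered basis $(e_{N-1,0},\dotsc,e_{N-1,N-1})$, the operator $w|_R$ is upper triangular with $N$ pairwise distinct diagonal entries $q^{2(N-1-j-k)}$ and nonzero superdiagonal coming from the $q$-integer factors $(j)_{q^2}$; a recursion on the triangular eigenvectors shows that $e_{N-1,N-1}$ has nonzero component in every eigenspace of $w|_R$, hence is a cyclic vector, so $\{e_{N-1,N-1}\leftharpoonup w^b : b \in [0,N)\}$ spans $R$.

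Second, for each fixed $j$ the subspace $C_j := \Span\{e_{ij} : i \in [0,N)\}$ is $v$-stable, and the very same triangular analysis applies: $v|_{C_j}$ has pairwise distinct diagonal entries $\lambda q^{2(i-j-k)}$ and nonzero subdiagonal, so $e_{N-1,j}$ is a cyclic vector for $v|_{C_j}$ and $\{e_{N-1,j}\leftharpoonup v^a : a \in [0,N)\}$ spans $C_j$. Combining the two steps, each $e_{N-1,j}$ lies in $e_{N-1,N-1}\leftharpoonup B_{\lambda,\mu}$, and then each $e_{ij}$ lies in $e_{N-1,j}\leftharpoonup B_{\lambda,\mu} \subseteq e_{N-1,N-1}\leftharpoonup B_{\lambda,\mu}$; therefore the whole basis of $V_k$ lies in $\phi_k(B_{\lambda,\mu})$, which establishes surjectivity.

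I expect the main obstacle to be the cyclicity step itself, specifically the claim that the eigenvectors of a triangular operator with distinct eigenvalues and nowhere-vanishing off-diagonal entries have all their coordinates nonzero below the leading position. This is what forces $e_{N-1,N-1}$ (respectively $e_{N-1,j}$) to expand in the eigenbasis with all coefficients nonzero, which in turn guarantees that it is a cyclic vector. The verification is a straightforward recursion on the triangular matrix, but needs to be carried out carefully to ensure that no coefficient accidentally vanishes.
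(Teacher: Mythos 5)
Your strategy is sound and genuinely different from the paper's. The paper never argues inside $V_k$: it sets $J=\Ker(\phi_k)$, gets surjectivity of the full map $\phi_k$ from Lemma~\ref{lem:Uq-action-on-Oq-submod-generator}, observes $K-q^{2k}\in J$ so that $U/J$ has $\{E^rF^s+J\}$ as a basis, and then checks that the images of the basis $\{v_\lambda^i w_\mu^j\}$ of $B_{\lambda,\mu}$ are triangular with respect to that basis, hence again a basis. You instead prove surjectivity of the restriction directly, exhibiting $x^{N-1}y^{N-1}z^k$ as a cyclic vector for the right $B_{\lambda,\mu}$-action on $V_k$ via the two bidiagonal operators $w|_R$ and $v|_{C_j}$; the module-map identity $(e\leftharpoonup w^b)\leftharpoonup v^a=e\leftharpoonup(w^bv^a)$ then assembles the two steps correctly, and the dimension count $\dim B_{\lambda,\mu}=\dim V_k=N^2$ finishes the job. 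Both arguments rest on the same triangularity phenomenon; yours is more computational but does not need the analysis of $U/J$.

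One step of your sketch needs repair, though the repair is easy. You reduce cyclicity to the claim that each eigenvector of a triangular bidiagonal operator with distinct eigenvalues has all coordinates nonzero below its leading position, and you then infer that $e_{N-1,N-1}$ has nonzero component in every eigenspace. That inference is not valid in general: if $U$ is the upper triangular matrix of eigenvectors, nonvanishing of all its upper-triangular entries does not force the last column of $U^{-1}$ to have nonzero entries (already for $3\times 3$ matrices, e.g.\ $u_0=e_0$, $u_1=e_0+e_1$, $u_2=e_0+e_1+e_2$ gives $e_2=u_2-u_1+0\cdot u_0$). The conclusion is nevertheless true here, and the cleanest fix avoids eigenvectors entirely: if $Te_j=d_je_j+c_je_{j-1}$ with all $c_j\ne 0$, then the $T$-stable subspace generated by $e_{N-1}$ contains $c_{N-1}e_{N-2}=Te_{N-1}-d_{N-1}e_{N-1}$, hence $e_{N-2}$, and by downward induction all $e_j$; this does not even use distinctness of the $d_j$. (Alternatively, compute the spectral components directly: $\prod_{i>j}(T-d_i)\,e_{N-1}=\bigl(\prod_{i>j}c_i\bigr)e_j\ne 0$.) With that step replaced, your proof is complete.
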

\begin{proof}
  We set $U := \overline{U}_q(\mathfrak{sl}_2)$ and $J := \Ker(\phi_k)$.
  Lemma~\ref{lem:Uq-action-on-Oq-submod-generator} says that $\phi_k$ is surjective.
  By Lemma~\ref{lem:actions-x-y-z}, we have $K - q^{2k} \in J$. Hence we have $K^{a} E^{b} F^{c} = q^{2 a k} E^{b} F^{c}$ in $U/J$ for all $a, b, c \in [0, N)$.
  Therefore $U/J$ is spanned by the set
  \begin{equation}
    \label{lem:CSA-Oq-SL2-generators-3-lemma-2-eq-1}
    \{ E^r F^s + J \mid r, s \in [0, N) \},
  \end{equation}
  which is in fact a basis of $U/J$ since $\dim(U/J) = \dim(V_k) = N^2$.

  We recall from Lemma~\ref{lem:CSA-v-w-relations} that $B_{\lambda,\mu}$ has the set $\mathcal{B} = \{ v_\lambda^i w_{\mu}^j \mid i, j \in [0, N) \}$ as a basis. The element $v_{\lambda}^i w_{\mu}^j$ is of the form $E^i F^j$ + (lower terms), where we have used the same monomial order as in Section~\ref{sec:CSA-Uq-sl2}.
  In view of the basis \eqref{lem:CSA-Oq-SL2-generators-3-lemma-2-eq-1} of $U/J$, we see that the image of $\mathcal{B}$ in $U/J$ is also a basis of $U/J$. This implies that the restriction of $\phi_k$ to $B_{\lambda,\mu}$ is in fact an isomorphism, as stated.
\end{proof}

\begin{proof}[Proof of Theorem~\ref{thm:CSA-Oq-SL2-generators-3}]
  The element $w = \phi_1(\Lambda) \in V_1$ is non-zero by Lemma~\ref{lem:CSA-Oq-SL2-generators-3-lemma-2}.
  Moreover, $w$ belongs to $A := (B_{\lambda,\mu})^{\dcsa}$ as $\Lambda$ is a right integral.
  Lemma \ref{lem:CSA-Oq-SL2-generators-3-lemma-1} implies that $A$ has no non-zero nilpotent elements.
  Thus $w^k \ne 0$ for all $k \ge 0$. Since $\overline{\mathcal{O}}_q(SL_2) =  V_0 \oplus \dotsb \oplus V_{N-1}$ and $w^k \in V_k$ by Lemma~\ref{lem:Uq-action-on-Oq-submod-mult}, the set $\mathcal{B} := \{ 1, w, w^2, \cdots, w^{N-1} \}$ is linearly independent. Since $\dim(A) = N$, the set $\mathcal{B}$ is in fact a basis of $A$. Summarizing, we have
  \begin{equation}
    \label{eq:CSA-Oq-SL2-generators-3-proof-eq-1}
    A = \Span\{ 1, w, \cdots, w^{N-1} \},
    \quad
    A \cap V_k = \Span\{ w^k \}.
  \end{equation}
  Since $w^N \ne 0$ and $w^N \in A \cap V_0$, we have $w^N = \gamma$ for some $\gamma \in \bfk^{\times}$.
  This implies that the minimal polynomial of $w$ is $X^N - \gamma$.
  By taking the counit of the both sides of $w^N = \gamma$, we have $\gamma = \varepsilon(w)^N$.
  Therefore $\varepsilon(w) \ne 0$. The proof is completed.
\end{proof}

\section{Remarks}
\label{sec:remarks}

\subsection{Normal coideal subalgebras of $\overline{U}_q(\mathfrak{sl}_2)$ and $\overline{\mathcal{O}}_q(SL_2)$}
\label{subsec:normal-CSA}

As in the previous section, $\bfk$ is an algebraically closed field of characteristic zero, $N > 1$ is an odd integer, and $q \in \bfk$ is a root of unity of order $N$.
In this section, we give miscellaneous remarks regarding coideal subalgebras of $\overline{U}_q(\mathfrak{sl}_2)$ and $\overline{\mathcal{O}}_q(SL_2)$.
We first discuss the normality of them.
Let $H$ be a Hopf algebra with antipode $S$.
The right adjoint action $\triangleleft$ of $H$ is defined by $x \triangleleft h = S(h_{(1)}) x h_{(2)}$ for $h, x \in H$.
We recall that a right coideal subalgebra $A$ of $H$ is said to be {\em normal} if $a \triangleleft h \in A$ for all $a \in A$ and $h \in H$ \cite[Definition 1.2]{MR1271619}.
It is obvious that $\bfk$ and $H$ are normal coideal subalgebras of $H$.
We call them {\em trivial} normal coideal subalgebras of $H$.

\begin{proposition}
  \label{prop:normal-CSA-Uq-sl2}
  $\overline{U}_q(\mathfrak{sl}_2)$ has no non-trivial normal coideal subalgebras.
\end{proposition}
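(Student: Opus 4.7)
The plan is to walk through the complete list in Theorem~\ref{thm:CSA-Uq-sl-2} and, for each non-trivial coideal subalgebra $A$, exhibit an element $a \in A$ and a standard generator $h \in \{E, F, K\}$ with $a \triangleleft h \notin A$. The organizing principle is that $x \triangleleft K = K^{-1} x K$ is a semisimple Hopf algebra automorphism of $\overline{U}_q(\mathfrak{sl}_2)$ whose weight vectors $E$, $\tilde{F}$, $K$ carry pairwise distinct eigenvalues $q^{-2}, q^2, 1$ (distinct since $N > 1$ is odd). Consequently, any $\triangleleft K$-stable subspace of $\overline{U}_q(\mathfrak{sl}_2)$ decomposes as a direct sum of its $K$-weight components.

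For the families $\langle u_{\alpha,\beta} \rangle$, $\langle \tilde{F} + \beta K \rangle$, and $\langle E+\lambda K, \tilde{F} + \mu K \rangle$, the generators have two or three nonzero $K$-weight components. Normality forces each component to lie in $A$, so at least two of $\{E, \tilde{F}, K\}$ belong to $A$. For the two $N$-dimensional families this is impossible: using the PBW basis of $\overline{U}_q(\mathfrak{sl}_2)$, any pair from $\{E, \tilde{F}, K\}$ generates a subalgebra of dimension at least $N^2 > N$ (the dimensions are listed in Table~\ref{tab:CSA-Uq-sl2}). For $\langle E+\lambda K, \tilde{F}+\mu K \rangle$, the constraint $(1-q^2)\lambda\mu = 1$ forces $\lambda, \mu \ne 0$, so $K$ itself would lie in $A$; this contradicts $\langle v_\lambda, w_\mu \rangle \cap \Gamma = \{e\}$ established in the proof of Theorem~\ref{thm:CSA-Uq-sl-2}.

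For $\langle K^r \rangle$, $\langle K^r, E \rangle$, $\langle K^r, \tilde{F} \rangle$ with $0 < r < N$, the generator $K^r$ is already $\triangleleft K$-invariant, so I switch to $\triangleleft E$ or $\triangleleft F$. Using $S(E) = -EK^{-1}$ and $S(F) = -KF$, a short computation yields
\begin{equation*}
  K^r \triangleleft E = (q^{2r}-1)\, E K^r
  \qquad \text{and} \qquad
  K^r \triangleleft F = \frac{1-q^{2r}}{q-q^{-1}}\, K^r \tilde{F},
\end{equation*}
both nonzero for $0 < r < N$ since $N$ is odd. The first expression involves $E$ and escapes $\langle K^r \rangle$ and $\langle K^r, \tilde{F} \rangle$; the second involves $\tilde{F}$ and escapes $\langle K^r, E \rangle$. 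The remaining boundary cases $\langle E \rangle = \langle K^N, E \rangle$ and $\langle \tilde{F} \rangle = \langle K^N, \tilde{F} \rangle$ are dispatched by the one-line identities $E \triangleleft F = (K^2-1)/(q-q^{-1})$ and $\tilde{F} \triangleleft E = 1 - K^2$ (the latter via $\tilde{F} E - q^2 E \tilde{F} = 1 - K^2$ using $EF-FE = (K-K^{-1})/(q-q^{-1})$); both sit outside the respective singly-generated subalgebras since $K^2 \ne 1$.

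The proof is thus a tidy finite case analysis driven by the classification. The only bookkeeping obstacle is that no single adjoint action rules out every family, so one must alternate between $\triangleleft K$ for the weight-mixed subalgebras and $\triangleleft E$ or $\triangleleft F$ for the $K$-homogeneous ones.
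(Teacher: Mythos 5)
Your proof is correct and follows essentially the same route as the paper, which records the right adjoint action of $K$, $E$ and $\tilde{F}$ on the generators and then checks the classification list case by case (the paper leaves that check as ``straightforward''). Your write-up simply supplies the omitted details, organizing the mixed-weight families via the eigenspace decomposition under $\triangleleft K$ and the $K$-homogeneous families via $K^r \triangleleft E$, $K^r \triangleleft F$, $E \triangleleft F$ and $\tilde{F} \triangleleft E$, all of which are (scaled versions of) entries in the paper's table.
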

\begin{proof}
  Letting $\tilde{F} = (q-q^{-1}) K F$, the adjoint action of $\overline{U}_q(\mathfrak{sl}_2)$ is given by
  \begin{equation*}
    \begin{aligned}
      K \triangleleft K & = K,
      & K \triangleleft E & = (1-q^{-2}) K E,
      & K \triangleleft \tilde{F} & = (1-q^{2}) K \tilde{F}, \\
      E \triangleleft K & = q^{-2} E,
      & E \triangleleft E & = (1-q^{-2}) E^2,
      & E \triangleleft \tilde{F} & = q^{-2} (K^2 - 1), \\
      \tilde{F} \triangleleft K & = q^2 F,
      & \tilde{F} \triangleleft E & = 1-K^2,
      & \tilde{F} \triangleleft \tilde{F} & = (1-q^2) \tilde{F}{}^2
    \end{aligned}
  \end{equation*}
  on generators $E$, $\tilde{F}$ and $K$. By using these formulas and our list of coideal subalgebras of $\overline{U}_q(\mathfrak{sl}_2)$ given in Theorem~\ref{thm:CSA-Uq-sl-2}, one can verify the claim straightforwardly.
\end{proof}

Takeuchi proved that the assignment $A \mapsto H A^{+}$ is a bijection from the set of normal coideal subalgebras $A$ of $H$ such that ${}_AH$ is faithfully flat to the set of Hopf ideals $I$ of $H$ such that $H$ is coflat over the quotient coalgebra $H/I$ \cite[Theorem 3.2]{MR1271619}.
Now we assume that $H$ is finite-dimensional.
By Takeuchi's result and Skryabin's freeness theorem, we see that the bijection \eqref{eq:Masuoka-Skryabin-correspondence} induces a bijection between the set of Hopf subalgebras of $H$ and the set of normal right coideal subalgebras of $H^*$.
An immediate consequence of Proposition \ref{prop:normal-CSA-Uq-sl2} is:

\begin{proposition}
  $\overline{\mathcal{O}}_q(SL_2)$ has no non-trivial Hopf subalgebras.
\end{proposition}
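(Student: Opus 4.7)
The plan is to deduce this immediately from the preceding Proposition together with the duality machinery set up earlier in the paper. Let $H = \overline{\mathcal{O}}_q(SL_2)$ and $U = \overline{U}_q(\mathfrak{sl}_2)$. The isomorphism $\phi : H \to U^*$ of Hopf algebras recalled in Section~\ref{sec:CSA-Oq-SL2} identifies $H$ with $U^*$, so dually $U \cong H^*$ as Hopf algebras.

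The paper has already noted, just before the statement, that Takeuchi's correspondence combined with Skryabin's freeness theorem yields a bijection between the set of Hopf subalgebras of $H$ and the set of normal right coideal subalgebras of $H^*$. Under this bijection, the two trivial Hopf subalgebras $\bfk$ and $H$ of $H$ correspond to the two trivial normal coideal subalgebras $H^*$ and $\bfk$ of $H^*$, respectively. Therefore, a non-trivial Hopf subalgebra of $H$ would give rise to a non-trivial normal right coideal subalgebra of $H^* \cong U$.

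Applying Proposition~\ref{prop:normal-CSA-Uq-sl2}, which asserts that $U = \overline{U}_q(\mathfrak{sl}_2)$ has no non-trivial normal coideal subalgebras, we conclude that $H$ has no non-trivial Hopf subalgebras. Since all the hard work (the classification of coideal subalgebras of $\overline{U}_q(\mathfrak{sl}_2)$ and the case-by-case verification of normality) has already been done, no step here presents a real obstacle; the proof reduces to invoking the duality correspondence in the correct direction.
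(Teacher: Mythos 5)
Your proof is correct and is essentially the paper's own argument: the paper also derives this proposition as an immediate consequence of Proposition~\ref{prop:normal-CSA-Uq-sl2} via the Takeuchi--Skryabin bijection between Hopf subalgebras of $H$ and normal right coideal subalgebras of $H^*$, applied with $H = \overline{\mathcal{O}}_q(SL_2)$ and $H^* \cong \overline{U}_q(\mathfrak{sl}_2)$. Your explicit check that the trivial objects correspond to each other is a correct (and slightly more careful) spelling-out of what the paper leaves implicit.
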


We also have:

\begin{proposition}
  The non-trivial normal coideal subalgebras of $\overline{\mathcal{O}}_q(SL_2)$ are
  \begin{equation*}
    \langle a^{-1} b \rangle,
    \quad \langle c d^{-1} \rangle \quad \text{and}
    \quad \langle a^{N/r}, a^{-1} b, a c \rangle = \langle d^{N/r}, b d, c d^{-1} \rangle,
  \end{equation*}
  where $r$ varies among all positive divisors of $N$ with $r < N$.
\end{proposition}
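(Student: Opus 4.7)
The plan is to reduce the problem to identifying the Hopf subalgebras of $\overline{U}_q(\mathfrak{sl}_2)$ and then transporting them through the bijection \eqref{eq:Masuoka-Skryabin-correspondence-2}. As explained in Subsection~\ref{subsec:normal-CSA}, Takeuchi's theorem combined with Skryabin's freeness theorem shows that $A \mapsto A^{\dcsa}$ restricts to a bijection between the set of Hopf subalgebras of $U := \overline{U}_q(\mathfrak{sl}_2)$ and the set of normal right coideal subalgebras of $H := \overline{\mathcal{O}}_q(SL_2)$. Therefore it suffices to enumerate the Hopf subalgebras of $U$ from the list given in Theorem~\ref{thm:CSA-Uq-sl-2} and apply Theorem~\ref{thm:CSA-Oq-SL2-generators} to compute the corresponding $A^{\dcsa}$.

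The enumeration is the main step. I would go through each family in Theorem~\ref{thm:CSA-Uq-sl-2} and check whether the coideal subalgebra is stable under $\Delta$ (stability under $S$ then follows automatically in the finite-dimensional setting). Since $\Delta(E) = E \otimes K + 1 \otimes E$, any Hopf subalgebra containing $E$ must contain $K$; similarly for $\tilde{F}$. This immediately rules out $\langle K^r, E \rangle$ and $\langle K^r, \tilde{F} \rangle$ for $r > 1$. For the subalgebras with $A \cap \Gamma = \{e\}$, namely $\langle E + \alpha \tilde{F} + \beta K \rangle$, $\langle \tilde{F} + \beta K \rangle$, and $\langle E + \lambda K, \tilde{F} + \mu K \rangle$, the same argument shows that $\Delta$ would force $K$ into the algebra, contradicting $A \cap \Gamma = \{e\}$. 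The surviving candidates are $\overline{U}_q(\mathfrak{sl}_2)$, $\langle K, E \rangle$, $\langle K, \tilde{F} \rangle$, and $\langle K^r \rangle$ for each positive divisor $r$ of $N$ (all of which are easily seen to be Hopf subalgebras).

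Finally I would apply the formulas \eqref{eq:CSA-Oq-SL2-1}--\eqref{eq:CSA-Oq-SL2-4} from Theorem~\ref{thm:CSA-Oq-SL2-generators} to identify the corresponding normal coideal subalgebras of $H$. The Hopf subalgebras $\overline{U}_q(\mathfrak{sl}_2)$ and $\langle K^N \rangle = \bfk$ yield the trivial normal coideal subalgebras $\bfk \cdot 1$ and $\overline{\mathcal{O}}_q(SL_2)$ respectively; excluding these leaves $\langle K, E \rangle^{\dcsa} = \langle c d^{-1} \rangle$, $\langle K, \tilde{F} \rangle^{\dcsa} = \langle a^{-1} b \rangle$, and $\langle K^r \rangle^{\dcsa} = \langle a^{N/r}, a^{-1}b, ac \rangle = \langle d^{N/r}, bd, cd^{-1} \rangle$ for each positive divisor $r$ of $N$ with $r < N$, which is exactly the asserted list. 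The only real obstacle is the patient case check for the coideal subalgebras of the form $\langle E + \lambda K, \tilde{F} + \mu K \rangle$, where one must use the description of $A \cap \Gamma$ from the proof of Theorem~\ref{thm:CSA-Uq-sl-2} to confirm that $K$ is not available as a candidate to absorb the $K$-term in $\Delta(E + \lambda K)$.
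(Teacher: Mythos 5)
Your proposal is correct and follows essentially the same route as the paper: both reduce the problem, via the Takeuchi--Skryabin bijection between Hopf subalgebras of $\overline{U}_q(\mathfrak{sl}_2)$ and normal coideal subalgebras of $\overline{\mathcal{O}}_q(SL_2)$, to enumerating the Hopf subalgebras from Theorem~\ref{thm:CSA-Uq-sl-2} and then reading off their images from Theorem~\ref{thm:CSA-Oq-SL2-generators}. The only difference is that you spell out the (correct) comultiplication argument showing why only $\langle K^r \rangle$, $\langle K, E \rangle$ and $\langle K, \tilde{F} \rangle$ survive, a step the paper simply asserts.
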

\begin{proof}
  The non-trivial Hopf subalgebras of $\overline{U}_q(\mathfrak{sl}_2)$ are $\langle E, K \rangle$, $\langle F, K \rangle$ and $\langle K^r \rangle$, where $r$ is a positive divisor of $N$ with $r < N$.
  Hence the non-trivial normal coideal subalgebras of $\overline{\mathcal{O}}_q(SL_2)$ are $\langle E, K \rangle^{\dcsa}$, $\langle F, K \rangle^{\dcsa}$ and $\langle K^r \rangle^{\dcsa}$.
  Theorem~\ref{thm:CSA-Oq-SL2-generators} completes the proof.
\end{proof}

\subsection{Orbits by Hopf automorphisms}
\label{subsec:orbits}

Let $H$ be a finite-dimensional Hopf algebra.
Although a coideal subalgebra of $H$ is an analog of subgroups in group theory, the set $\mathcal{C}(H)$ of coideal subalgebras of $H$ can be infinite.
The group of Hopf algebra automorphisms of $H$ naturally acts on $\mathcal{C}(H)$.
Let $\overline{\mathcal{C}}(H)$ denote the set of orbits.
Chirvasitu, Kasprzak and Szulim \cite[Section 5]{MR4125589} asked if the set $\overline{\mathcal{C}}(H)$ is finite.
The answer is negative in general, as the following proposition shows:

\begin{proposition}
  The set $\overline{\mathcal{C}}(H)$ is infinite if $H = \overline{U}_q(\mathfrak{sl}_2)$ or $H = \overline{\mathcal{O}}_q(SL_2)$.
\end{proposition}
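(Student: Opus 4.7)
The plan is to handle $H = \overline{U}_q(\mathfrak{sl}_2)$ first and then transfer the conclusion to $\overline{\mathcal{O}}_q(SL_2)$ via the duality of Section~\ref{sec:preliminaries}. I would begin by exhibiting the one-parameter family of Hopf automorphisms $\psi_s$ ($s \in \bfk^{\times}$) of $H$ defined on generators by $\psi_s(K) = K$, $\psi_s(E) = sE$ and $\psi_s(\tilde F) = s^{-1} \tilde F$; well-definedness is a short check of the defining relations of Subsection~\ref{subsec:Uq-sl2}. Applied to the $N$-dimensional coideal subalgebras of Theorem~\ref{thm:CSA-Uq-sl-2}, these give
\begin{equation*}
\psi_s\bigl(\langle aE + b\tilde F + cK\rangle\bigr) = \langle saE + s^{-1}b\tilde F + cK\rangle,
\end{equation*}
so under the identification $\mathcal{C}_N \cong \mathbb{P}_2$ from the remark following Theorem~\ref{thm:CSA-Uq-sl-2} the rescaling group acts by $[a:b:c] \mapsto [sa:s^{-1}b:c]$ and the rational function $I = ab/c^{2}$ is invariant. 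Since $I$ takes infinitely many values on $\mathbb{P}_2$, already the rescaling subgroup has infinitely many orbits on $\mathcal{C}_N$.

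The core step is to show that the full Hopf automorphism group does not merge these orbits, which I would do by classifying $\mathrm{Aut}_{\mathrm{Hopf}}(H)$ explicitly. Any Hopf automorphism $\psi$ sends the grouplike $K$ to some $K^{j}$ with $\gcd(j, N) = 1$, and the defining relation $KE = q^{2}EK$ combined with $\psi(K) = K^{j}$ forces $\mathrm{Ad}(K) \psi(E) = q^{2 j^{-1} \bmod N} \psi(E)$, while also $\psi(E) \in \Prim_{K^{j}, 1}(H)$. The main obstacle is to establish the claim
\begin{equation*}
\Prim_{K^{j}, 1}(H) = \bfk(K^{j} - 1) \quad \text{for every } j \not\equiv 1 \pmod N.
\end{equation*}
I would prove this by decomposing a candidate skew-primitive according to the $\mathbb{Z}/N$-grading of $H$ given by $\mathrm{Ad}(K)$, expanding each graded piece in the PBW basis, and applying the coproduct formula~\eqref{eq:uD-comultiplication-2} to obtain a finite linear system (essentially the same computation reduces to the determination of $(g^{j}, 1)$-skew primitives in the graded Hopf algebra of Example~\ref{ex:CSA-gr-Uq-sl2}). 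Since $K^{j} - 1$ is $\mathrm{Ad}(K)$-invariant while $\psi(E)$ must be a non-trivial eigenvector, the claim forces $\psi(K) = K$; a short argument using $\Prim_{K, 1}(H) = \bfk(K-1) \oplus \bfk E \oplus \bfk \tilde F$, the eigenvalue conditions, and the relation $\tilde F E - q^{2} E \tilde F = 1 - K^{2}$ then gives $\psi = \psi_{s}$ for some $s \in \bfk^{\times}$. Consequently $I$ is invariant under every Hopf automorphism, and $\overline{\mathcal{C}}(\overline{U}_q(\mathfrak{sl}_2))$ is infinite.

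For $\overline{\mathcal{O}}_q(SL_2)$ I would verify that the bijection $A \mapsto A^{\dcsa}$ of~\eqref{eq:Masuoka-Skryabin-correspondence-2} is equivariant with respect to the isomorphism $\mathrm{Aut}_{\mathrm{Hopf}}(\overline{U}_q(\mathfrak{sl}_2)) \cong \mathrm{Aut}_{\mathrm{Hopf}}(\overline{\mathcal{O}}_q(SL_2))$ induced by the Hopf pairing. Explicitly, for $\psi \in \mathrm{Aut}_{\mathrm{Hopf}}(\overline{U}_q(\mathfrak{sl}_2))$ and the dual automorphism $\hat\psi$ determined by $(\hat\psi(f), u) = (f, \psi(u))$, a direct computation from~\eqref{eq:A-dual-formula} yields $(\psi(A))^{\dcsa} = \hat\psi^{-1}(A^{\dcsa})$. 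Hence the two orbit sets are in bijection, and $\overline{\mathcal{C}}(\overline{\mathcal{O}}_q(SL_2))$ is also infinite.
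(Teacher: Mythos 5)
Your proposal is correct and follows essentially the same route as the paper: reduce to $\overline{U}_q(\mathfrak{sl}_2)$ via the duality, identify the Hopf automorphism group with the one-parameter rescaling family, and observe that the $N$-dimensional coideal subalgebras $\langle E + \alpha\tilde F + \beta K\rangle$ then fall into infinitely many orbits. The only difference is that you spell out the skew-primitive argument classifying $\mathrm{Aut}_{\mathrm{Hopf}}(\overline{U}_q(\mathfrak{sl}_2))\cong\bfk^{\times}$, where the paper simply refers to the analogous computation in \cite[\S3.1.2]{MR1492989}, and you package the orbit separation as the invariant $ab/c^{2}$ rather than normalizing $\beta=1$.
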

\begin{proof}
  For a finite-dimensional Hopf algebra $H$, the bijection between $\mathcal{C}(H)$ and $\mathcal{C}(H^*)$ induces a bijection $\overline{\mathcal{C}}(H)$ and $\overline{\mathcal{C}}(H^*)$. Hence it suffices to show the claim of this proposition for $H = \overline{U}_q(\mathfrak{sl}_2)$.

  For $c \in \bfk^{\times}$, there is a Hopf algebra automorphism $\vartheta_{c}$ of $\overline{U}_q(\mathfrak{sl}_2)$ determined by $\vartheta_{c}(E) = c^{-1} E$, $\vartheta_{c}(F) = c F$ and $\vartheta_{c}(K) = K$. One can show that the assignment $c \mapsto \vartheta_{c}$ gives an isomorphism between the group $\bfk^{\times}$ and the group of Hopf algebra automorphisms of $\overline{U}_q(\mathfrak{sl}_2)$ in a similar way as \cite[\S3.1.2]{MR1492989}, where Hopf algebra automorphisms of $U_q(\mathfrak{sl}_2)$ are discussed. Since we have
  \begin{equation*}
    \vartheta_c(\langle E + \alpha \tilde{F} + \beta K \rangle)
    = \langle E + c^2 \alpha \tilde{F} + c \beta K \rangle
    \quad (\alpha, \beta \in \bfk),
  \end{equation*}
  the coideal subalgebras $\langle E + \alpha \tilde{F} + K \rangle$ ($\alpha \in \bfk$) belong to different orbits. The proof is done.
\end{proof}

\subsection{On Maschke's theorem for coideal subalgebras}

Let $H$ be a finite-dimensional Hopf algebra, and let $A$ be a coideal subalgebra.
A {\em total integral} of $A$ is a right integral $\Lambda \in A$ such that $\varepsilon(\Lambda) \ne 0$.
{\em Maschke's theorem for finite-dimensional Hopf algebras} \cite[\S2.2]{MR1243637} states that $H$ is semisimple if and only if $H$ has a total integral. It is natural to ask if the same holds for coideal subalgebras.
If $A$ is semisimple, then $A$ has a total integral since the counit $\varepsilon: A \to \bfk$ splits as a right $A$-module map.
Koppinen \cite{MR1199682} showed that $A$ is semisimple if it has a total integral and $S^2(A) \subset A$.
On the basis of our classification results of coideal subalgebras, we point out that the converse does not holds in general if we drop the condition $S^2(A) \subset A$. Namely,

\begin{proposition}
  Suppose that $\alpha$ and $\beta$ are elements of $\bfk$ such that $\beta \ne 0$ and $4 \alpha = (1 - q^2) \beta^2$. Then the coideal subalgebra $\langle E + \alpha \tilde{F} + \beta K \rangle$ of $\overline{U}_q(\mathfrak{sl}_2)$ is not semisimple but has a total integral.
\end{proposition}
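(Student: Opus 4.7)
Set $u = u_{\alpha,\beta} = E + \alpha \tilde{F} + \beta K$ and $A = \langle u \rangle$. The plan is to use Theorem~\ref{thm:CSA-u-alpha-beta-min-pol} twice: once to detect non-semisimplicity through the discriminant-like quantity $D(\alpha,\beta)$, and once to obtain a concrete right integral as a polynomial in $u$, and then to evaluate the counit on it.

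First, I would check non-semisimplicity. Under the hypothesis $4\alpha = (1-q^2)\beta^2$, the factor
\[
D_0(\alpha,\beta) = \beta^2 - \frac{(q^0 + q^{-0})^2}{1-q^2}\,\alpha = \beta^2 - \frac{4\alpha}{1-q^2} = 0,
\]
so $D(\alpha,\beta) = 0$. By Theorem~\ref{thm:CSA-u-alpha-beta-min-pol} this means $A \cong \bfk[X]/(\phi_{\alpha,\beta}(X))$ is not semisimple. (Concretely, $\xi = 0$ in the notation of \eqref{eq:CSA-u-def-xi}, so $\omega_k = \omega_{-k}$ for every $k$, and each root $\omega_k$ with $k \neq 0$ occurs with multiplicity $2$ in $\phi_{\alpha,\beta}$.)

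Next, I would exhibit a total integral. Since $\phi_{\alpha,\beta}(X)$ has $\beta = \varepsilon(u)$ as a root,
\[
\psi(X) := (X - \beta)^{-1}\phi_{\alpha,\beta}(X)
\]
is a polynomial, and the element $\Lambda := \psi(u) \in A$ is a non-zero right integral of $A$ by exactly the argument already given at the start of the proof of Theorem~\ref{thm:CSA-Oq-SL2-generators-2}. It remains to show $\varepsilon(\Lambda) \neq 0$. Since $\varepsilon(u) = \beta$, we have $\varepsilon(\Lambda) = \psi(\beta)$, and substituting $\alpha = (1-q^2)\beta^2/4$ into the $k$-th quadratic factor of $\phi_{\alpha,\beta}$ and evaluating at $X = \beta$ collapses it, using $(q^{2k}-q^{-2k})^2 + 4 = (q^{2k}+q^{-2k})^2$, to
\[
\beta^2\!\left(1 - (q^{2k}+q^{-2k}) + \tfrac{1}{4}(q^{2k}+q^{-2k})^2\right) = \beta^2\!\left(\tfrac{q^{2k}+q^{-2k}}{2} - 1\right)^{\!2}.
\]
This factor vanishes exactly when $q^{4k} = 1$. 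Since $N$ is odd and $k \in [1,(N-1)/2]$, we have $N \nmid 4k$, so every factor is non-zero; hence $\psi(\beta) \neq 0$ and $\Lambda$ is a total integral.

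I expect the main (modest) obstacle to be the algebraic identity collapsing the $k$-th factor of $\psi(\beta)$ to a perfect square; once that identity is in hand, the rest is bookkeeping. Note that the argument is consistent with Koppinen's theorem recalled in the introduction: the coideal subalgebra $A$ here is not stable under $S^2$, which is precisely the hypothesis being dropped.
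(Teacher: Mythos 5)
Your proof is correct and follows essentially the same route as the paper: non-semisimplicity via Theorem~\ref{thm:CSA-u-alpha-beta-min-pol} (the paper writes out the factorization $\phi_{\alpha,\beta}(X) = (X-\beta)\prod_{k}\bigl(X - \tfrac{q^{2k}+q^{-2k}}{2}\beta\bigr)^2$ directly rather than invoking $D(\alpha,\beta)=0$, but this is the same computation), and the total integral $\Lambda = \psi(u)$ with $\psi(X) = (X-\beta)^{-1}\phi_{\alpha,\beta}(X)$, shown total because $\beta$ is a simple root. Your explicit collapse of each quadratic factor at $X=\beta$ to $\beta^2\bigl(\tfrac{q^{2k}+q^{-2k}}{2}-1\bigr)^2$ is just a more hands-on verification of that last point.
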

\begin{proof}
  Under the assumption of this proposition, the minimal polynomial $\phi_{\alpha, \beta}(X)$ of $u := E + \alpha \tilde{F} + \beta K$ given in Theorem~\ref{thm:CSA-u-alpha-beta-min-pol} is decomposed as
  \begin{equation*}
    \phi_{\alpha, \beta}(X) = (X - \beta) \prod_{k = 1}^{(N-1)/2} \left( X - \frac{q^{2k} + q^{-2k}}{2} \beta \right)^2.
  \end{equation*}
  Since $\phi_{\alpha,\beta}(X)$ has multiple roots, the coideal subalgebra $\langle u \rangle$ is not semisimple.
  We consider the polynomial $\psi(X) = (X-\beta)^{-1} \phi_{\alpha, \beta}(X)$.
  As we have noted in the proof of Theorem~\ref{thm:CSA-Oq-SL2-generators-2}, the element $\Lambda := \psi(u)$ is an integral in $\langle u \rangle$. Moreover, since $\beta$ is a single root of $\phi_{\alpha,\beta}(X)$, we have $\varepsilon(\Lambda) = \psi(\beta) \ne 0$. Namely, $\Lambda$ is a total integral of $\langle u \rangle$. The proof is done.
\end{proof}

\subsection{Examples of simple algebras in $\Rep(\overline{U}_q(\mathfrak{sl}_2))$}

As in ordinary ring theory, a {\em simple algebra} in a tensor category $\mathcal{C}$ is defined to be an algebra $A$ in $\mathcal{C}$ being simple in the category of $A$-bimodules in $\mathcal{C}$.
Given a Hopf algebra $H$, we denote by $\Rep(H)$ the tensor category of finite-dimensional left $H$-modules.
By the result of Skryabin \cite[Theorem 6.1]{MR2286047}, we see that a coideal subalgebra of $\overline{\mathcal{O}}_q(SL_2)$ is a simple algebra in $\Rep(\overline{U}_q(\mathfrak{sl}_2))$ by the left action given by $u \rightharpoonup f = f_{(1)} (f_{(2)}, u)$ for $u \in \overline{U}_q(\mathfrak{sl}_2)$ and $f \in \overline{\mathcal{O}}_q(SL_2)$.

We describe some examples of simple algebras in $\Rep(\overline{U}_q(\mathfrak{sl}_2)$ obtained from coideal subalgebras of $\overline{\mathcal{O}}_q(SL_2)$.
The left action $\rightharpoonup$ is given by
\begin{align*}
    E \rightharpoonup a & = 0,
    & E \rightharpoonup b & = a,
    & E \rightharpoonup c & = 0,
    & E \rightharpoonup d & = c, \\
    F \rightharpoonup a & = b,
    & F \rightharpoonup b & = 0,
    & F \rightharpoonup c & = d,
    & F \rightharpoonup d & = 0, \\
    K \rightharpoonup a & = q a,
    & K \rightharpoonup b & = q^{-1} b,
    & K \rightharpoonup c & = q c,
    & K \rightharpoonup d & = q^{-1} d
\end{align*}
on the generators. By the same way as in Subsection~\ref{subsec:Uq-sl2-action-on-Oq-SL2}, we have
\begin{align}
  \label{lem:left-action-E}
  E \rightharpoonup b^i c^j d^k
  & = q^{-2i-k+1}(i+k)_{q^2} b^i c^{j+1} d^{k-1} + q^{-2i-k+2} (i)_{q^2} b^{i-1} c^j d^{k-1} \\
  \label{lem:left-action-F}
  F \rightharpoonup b^i c^j d^k
  & = q^{i-j+1} (j)_{q^2} b^i c^{j-1} d^{k+1}, \\
  \label{lem:left-action-K}
  K \rightharpoonup b^i c^j d^k
  & = q^{-i+j-k} b^i c^j d^k
\end{align}
for all $i, j \in [0, N)$ and $k \in \mathbb{Z}$.

\begin{example}
  \label{ex:simple-alg-in-Rep-Uq-sl2-1}
  We choose a positive divisor $r$ of $N$ and consider the coideal subalgebra $A := \langle K^r \rangle^{\dcsa}$ of $\overline{\mathcal{O}}_q(SL_2)$.
  As we have seen in Subsection \ref{subsec:CSA-Oq-SL2-generators}, the algebra $A$
  is generated by $\mathit{s} := b d$, $\mathit{t} := c d^{-1}$ and $u = d^{N/r}$ subject to \eqref{eq:CSA-Oq-SL2-Kr-dual-defining-relations}.
  By \eqref{lem:left-action-E}--\eqref{lem:left-action-K}, the actions of $E$, $F$ and $K$ are given by
  \begin{align*}
    E \rightharpoonup s & = q^{-2} (q+q^{-1}) s t + q^{-1},
    & F \rightharpoonup s & = 0,
    & K \rightharpoonup s & = q^{-2} s, \\
    E \rightharpoonup t & = -q t^2,
    & F \rightharpoonup t & = 1,
    & K \rightharpoonup t & = q^2 t, \\
    E \rightharpoonup u & = q^{-N/r+1} (N/r)_{q^2} t u,
    & F \rightharpoonup u & = 0,
    & K \rightharpoonup u & = q^{-N/r} u.
  \end{align*}
\end{example}

\begin{example}
  The coideal subalgebra $\langle E + \alpha K \rangle^{\dcsa}$ ($\alpha \in \bfk$) is generated by $t := c d^{-1}$ and $v := d^2 + (q-q^{-1}) \alpha b d$ subject to $t^N = 0$, $v^N = 1$ and $v t = q^{2} t v$. The actions of $E$, $F$ and $K$ on $t$ are given in Example~\ref{ex:simple-alg-in-Rep-Uq-sl2-1}.
  By \eqref{lem:left-action-E}--\eqref{lem:left-action-K}, we have $F \rightharpoonup v = 0$, $K \rightharpoonup v = q^{-2} v$ and
  \begin{align*}
    E \rightharpoonup v
    & = q^{-1} (2)_{q^2} c d + (q-q^{-1}) \alpha (q^{-2} (2)_{q^2} b c + q^{-1}) \\
    & = (q+q^{-1}) t v + \alpha(1-q^{-2}).
  \end{align*}
\end{example}

\begin{example}
  We consider the coideal subalgebra $A := \langle E + \lambda K, \tilde{F} + \mu K \rangle^{\dcsa}$, where $\lambda$ and $\mu$ are elements of $\bfk$ satisfying $(1-q^2) \lambda \mu = 1$.
  By Theorem~\ref{thm:CSA-Oq-SL2-generators-3}, the algebra $A$ is generated by $w := h \leftharpoonup \Lambda$, where $h = b^{N-1} c^{N-1} d^2$ and $\Lambda$ is a non-zero right integral of $\langle E + \lambda K, \tilde{F} + \mu K \rangle$.
  We set $\delta = (h, \Lambda E)$. Then we have $\delta \ne 0$ and
  \begin{equation*}
    K \rightharpoonup w = q^{-2} w,
    \quad E \rightharpoonup w = \delta,
    \quad F \rightharpoonup w = -q \delta^{-1} w^2.
  \end{equation*}
  The first equation is easily verified as follows:
  \begin{equation*}
    K \rightharpoonup w
    = (K \rightharpoonup b^{N-1} c^{N-1} d^2) \leftharpoonup \Lambda
    \mathop{=}^{\eqref{lem:left-action-K}} q^{-2}w.
  \end{equation*}
  The second and the third equation can be obtained with the help of a result of Montgomery and Schneider \cite{MR1869767}. To explain this in more detail, we recall that $\overline{\mathcal{O}}_q(SL_2)$ has a right $\overline{U}_q(\mathfrak{sl}_2)$-submodule $V_k$ given by \eqref{eq:Uq-action-on-Oq-submod-Vk-def}. In terms of the generators $a$, $b$, $c$ and $d$ of $\overline{\mathcal{O}}_q(SL_2)$, it is expressed by
  \begin{equation*}
    V_k = \Span \{ b^i c^j d^{- i + j + 2k} \mid i, j \in [0, N) \}.
  \end{equation*}
  By \eqref{lem:left-action-E} and \eqref{lem:left-action-F}, we have
  \begin{align*}
    E \rightharpoonup w
    & = \text{(constant)} \cdot b^{N-2} c^{N-1} d^1 \leftharpoonup \Lambda
      \in A \cap V_0, \\
    F \rightharpoonup w
    & = \text{(constant)} \cdot b^{N-1} c^{N-2} d^3 \leftharpoonup \Lambda
      \in A \cap V_2.
  \end{align*}
  These equations, together with \eqref{eq:CSA-Oq-SL2-generators-3-proof-eq-1}, imply that $E \rightharpoonup w = \delta'$ and $F \rightharpoonup w = \delta'' w^2$ for some $\delta', \delta'' \in \bfk$. The scalar $\delta'$ is identical to $\delta$. Indeed,
  \begin{equation*}
    \delta'
    = \varepsilon(E \rightharpoonup w)
    = \varepsilon(E \rightharpoonup h \leftharpoonup \Lambda)
    = (h_{(1)}, \Lambda) \varepsilon(h_{(2)}) (h_{(3)}, E)
    = (h, \Lambda E) = \delta.
  \end{equation*}  
  According to \cite[Theorem 3.1]{MR1869767}, $\delta$ is invertible and $\delta'' = -q \delta^{-1}$.
\end{example}

\section*{Acknowledgment}

The authors thank the referee for careful reading of the manuscript.
The first author (K.\ S.) is supported by JSPS KAKENHI Grant Number 24K06676.
The second author (R.\ S.) is supported by JST SPRING Grant Number JPMJSP2124.

%% References
\def\cprime{$'$}
\let\origbibitem\bibitem
\renewcommand{\bibitem}[2][]{\origbibitem{#2}}


\begin{thebibliography}{99}
\bibitem[AM19]{MR3872859}
R.~Aziz and S.~Majid.
\newblock Co-double bosonisation and dual bases of {$c_q[SL_2]$} and
  {$c_q[SL_3]$}.
\newblock {\em J. Algebra}, 518:75--118, 2019.

\bibitem[AS98]{MR1659895}
N.~Andruskiewitsch and H.-J.~Schneider.
\newblock Lifting of quantum linear spaces and pointed {H}opf algebras of order
  {$p^3$}.
\newblock {\em J. Algebra}, 209(2):658--691, 1998.

\bibitem[AS02]{MR1913436}
N.~Andruskiewitsch and H.-J.~Schneider.
\newblock Pointed {H}opf algebras.
\newblock In {\em New directions in {H}opf algebras}, volume~43 of {\em Math.
  Sci. Res. Inst. Publ.}, pages 1--68. Cambridge Univ. Press, Cambridge, 2002.

\bibitem[BBN12]{MR2900441}
  T.~Banica, J.~Bichon, and S.~Natale.
\newblock Finite quantum groups and quantum permutation groups.
\newblock {\em Adv. Math.}, 229(6):3320--3338, 2012.

\bibitem[BeW18]{BeW18}
C. Berman and W. Wang. Formulae of $\imath$-divided powers in $\mathbf{U}_q(\mathfrak{sl}_2)$.
{\em J. Pure Appl. Algebra}, {\bf 222}:2667--2702, 2018.

\bibitem[Bur15]{MR3270060}
S.~Burciu.
\newblock On coideal subalgebras of abelian cocentral extensions and a
  generalization of {W}all's conjecture.
\newblock {\em J. Algebra Appl.}, 14(2):1550021, 17, 2015.

\bibitem[CKS20]{MR4125589}
P.~Chirvasitu, P.~Kasprzak, and P.~Szulim.
\newblock Integrals in left coideal subalgebras and group-like projections.
\newblock {\em Algebr. Represent. Theory}, 23(4):1499--1522, 2020.

\bibitem[Cli19]{MR3926231}
  Z.~Cline.
\newblock On actions of {D}rinfel'd doubles on finite dimensional algebras.
\newblock {\em J. Pure Appl. Algebra}, 223(8):3635--3664, 2019.

\bibitem[Doi83]{MR688207}
Y.~Doi.
\newblock On the structure of relative {H}opf modules.
\newblock {\em Comm. Algebra}, 11(3):243--255, 1983.

\bibitem[GX11]{MR2774698}
R.~Guralnick and F.~Xu.
\newblock On a subfactor generalization of {W}all's conjecture.
\newblock {\em J. Algebra}, 332:457--468, 2011.

\bibitem[HK11]{MR2764869}
I.~Heckenberger and S.~Kolb.
\newblock Right coideal subalgebras of the {B}orel part of a quantized
  enveloping algebra.
\newblock {\em Int. Math. Res. Not. IMRN}, (2):419--451, 2011.

\bibitem[HK12]{MR2967250}
I.~Heckenberger and S.~Kolb.
\newblock Homogeneous right coideal subalgebras of quantized enveloping
  algebras.
\newblock {\em Bull. Lond. Math. Soc.}, 44(4):837--848, 2012.

\bibitem[HS13]{MR3096611}
I.~Heckenberger and H.-J.~Schneider.
\newblock Right coideal subalgebras of {N}ichols algebras and the {D}uflo order
  on the {W}eyl groupoid.
\newblock {\em Israel J. Math.}, 197(1):139--187, 2013.

\bibitem[Kas95]{MR1321145}
C.~Kassel.
\newblock {\em Quantum groups}, volume 155 of {\em Graduate Texts in
  Mathematics}.
\newblock Springer-Verlag, New York, 1995.

\bibitem[Kha11]{MR2835327}
V.~K. Kharchenko.
\newblock Right coideal subalgebras of {$U^+_q({\mathfrak{so}}_{2n+1})$}.
\newblock {\em J. Eur. Math. Soc. (JEMS)}, 13(6):1677--1735, 2011.

\bibitem[KLSGR11]{MR2824522}
V.~K. Kharchenko, A.~V. Lara~Sagahon, and J.~L. Garza~Rivera.
\newblock Computing of the number of right coideal subalgebras of
  {$U_q(\mathfrak{so}_{2n+1})$}.
\newblock {\em J. Algebra}, 341:279--296, 2011.

\bibitem[Kop93]{MR1199682}
M.~Koppinen.
\newblock Coideal subalgebras in {H}opf algebras: freeness, integrals, smash
  products.
\newblock {\em Comm. Algebra}, 21(2):427--444, 1993.

\bibitem[KS97]{MR1492989}
A.~Klimyk and K.~Schm{\"u}dgen.
\newblock {\em Quantum groups and their representations}.
\newblock Texts and Monographs in Physics. Springer-Verlag, Berlin, 1997.

\bibitem[KS08]{MR2388323}
V.~K. Kharchenko and A.~V.~Lara Sagahon.
\newblock Right coideal subalgebras in {$U_q(\mathfrak{sl}_{n+1})$}.
\newblock {\em J. Algebra}, 319(6):2571--2625, 2008.

\bibitem[Lus90]{Lus90}
G.~Lusztig.
\newblock Finite-dimensional Hopf algebras arising from quantized universal enveloping algebra.
\newblock {\em J. Amer. Math. Soc.}, 3, 257–296, 1990.

\bibitem[Mas91]{MR1140616}
A.~Masuoka.
\newblock On {H}opf algebras with cocommutative coradicals.
\newblock {\em J. Algebra}, 144(2):451--466, 1991.

\bibitem[Mas92]{MR1157912}
A.~Masuoka.
\newblock Freeness of {H}opf algebras over coideal subalgebras.
\newblock {\em Comm. Algebra}, 20(5):1353--1373, 1992.

\bibitem[Mom10]{MR2678630}
M.~Mombelli.
\newblock Module categories over pointed {H}opf algebras.
\newblock {\em Math. Z.}, 266(2):319--344, 2010.

\bibitem[Mon93]{MR1243637}
S.~Montgomery.
\newblock {\em Hopf algebras and their actions on rings}, volume~82 of {\em
  CBMS Regional Conference Series in Mathematics}.
\newblock Published for the Conference Board of the Mathematical Sciences,
  Washington, DC, 1993.

\bibitem[MS99]{MR1710737}
E.~F. M\"{u}ller and H.-J. Schneider.
\newblock Quantum homogeneous spaces with faithfully flat module structures.
\newblock {\em Israel J. Math.}, 111:157--190, 1999.

\bibitem[MS01]{MR1869767}
S.~Montgomery and H.-J. Schneider.
\newblock Skew derivations of finite-dimensional algebras and actions of the
  double of the {T}aft {H}opf algebra.
\newblock {\em Tsukuba J. Math.}, 25(2):337--358, 2001.

\bibitem[Nic78]{MR506406}
W.~D. Nichols.
\newblock Bialgebras of type one.
\newblock {\em Comm. Algebra}, 6(15):1521--1552, 1978.

\bibitem[NM92]{MR1183481}
M.~Noumi and K.~Mimachi.
\newblock Askey-{W}ilson polynomials as spherical functions on {${\rm
  SU}_q(2)$}.
\newblock In {\em Quantum groups ({L}eningrad, 1990)}, volume 1510 of {\em
  Lecture Notes in Math.}, pages 98--103. Springer, Berlin, 1992.

\bibitem[Pog11]{MR2782598}
B.~Pogorelsky.
\newblock Right coideal subalgebras of quantized universal enveloping algebras
  of type {$G_2$}.
\newblock {\em Comm. Algebra}, 39(4):1181--1207, 2011.

\bibitem[PR21]{MR4207921}
B.~Pogorelsky and C.~Renz.
\newblock Right coideal subalgebras of a bosonization of the {F}omin-{K}irillov
  algebra {$\mathcal{FK}_3$}.
\newblock {\em Comm. Algebra}, 49(2):567--578, 2021.

\bibitem[Sch90]{MR1098988}
H.-J.~Schneider.
\newblock Principal homogeneous spaces for arbitrary {H}opf algebras.
\newblock volume~72, pages 167--195. 1990.
\newblock Hopf algebras.

\bibitem[Skr07]{MR2286047}
S.~Skryabin.
\newblock Projectivity and freeness over comodule algebras.
\newblock {\em Trans. Amer. Math. Soc.}, 359(6):2597--2623, 2007.

\bibitem[Tak79]{MR549940}
M.~Takeuchi.
\newblock Relative {H}opf modules---equivalences and freeness criteria.
\newblock {\em J. Algebra}, 60(2):452--471, 1979.

\bibitem[Tak94]{MR1271619}
M.~Takeuchi.
\newblock Quotient spaces for {H}opf algebras.
\newblock {\em Comm. Algebra}, 22(7):2503--2523, 1994.

\bibitem[{Voc}18]{2018arXiv180410007V}
K.~{Vocke}.
\newblock {On right coideal subalgebras of quantum groups}.
\newblock {\tt arXiv:1804.10007}.
\end{thebibliography}
\end{document}